\theoremstyle{plain}
\newtheorem{thm}{Theorem}[section]
\newtheorem{lem}{Lemma}[section]
\theoremstyle{remark}
\numberwithin{equation}{section}
\def\M{\mathbf{M^{n}}}
\renewcommand{\section}{\@startsection{section}{1}{0mm}
	{-\baselineskip}{0.5\baselineskip}{\bf\leftline}}
\begin{document}

\title[Differential Harnack inequalities]
{Differential Harnack inequalities for semilinear\\ parabolic equations on  Riemannian manifolds I:\\ Bakry-\'{E}mery curvature bounded below} 

\author[Zhihao Lu]{Zhihao Lu}
\address[Zhihao Lu]{School of Mathematical Sciences, University of Science and Technology of China, Hefei 230026,P.R.China}

\begin{abstract}
In this paper, we present a unified method for deriving differential Harnack inequalities for positive solutions of the semilinear parabolic equation
\begin{equation*}
	\partial_t u=\Delta_V u+H(u)
\end{equation*}
on complete Riemannian manifolds with Bakry-Émery curvature bounded below. This method transforms the problem of deriving differential Harnack inequalities into solving a related ODE system. As an application of this method, we obtain new and improved estimates for logarithmic-type equations and Yamabe-type equations. Moreover, under the non-negative Bakry-Émery curvature condition, we obtain complete sharp estimates for these equations. As a natural consequence of these results, we also establish sharp Harnack inequalities and Liouville-type theorems for these equations.
\end{abstract}

\keywords{Bakry-\'{E}mery Ricci curvature, nonlinear parabolic equation, differetial Harnack inequality, ODE system, Harnack inequality, Liouville theorem}

\subjclass[2020]{Primary: 58J35, 35A23; Secondary: 35B09, 35B40, 35B53}

\thanks{School of Mathematical Sciences,
University of Science and Technology of China, Hefei 230026, P. R. China}


\maketitle

\section{\textbf{Introduction}}



In 1986, P. Li and Yau proved a famous gradient estimate for positive solutions of the parabolic equation
\begin{equation}\label{1.1}
	\partial_t u(x,t)=(\Delta -q(x,t))u(x,t)
\end{equation}
on $(\mathbf{M}^n, g)\times[0,\infty)$. When $q(x,t)=0$, they showed the corresponding conclusion for the heat equation
\begin{equation}\label{1.2}
	u_t=\Delta u
\end{equation}
on a Riemannian manifold with Ricci curvature bounded below.

Suppose that $u$ is an arbitrary positive solution of equation \eqref{1.2} on a complete Riemannian manifold $(\mathbf{M}^n, g)$. If $Ricci(\mathbf{M}^n)\geq -Kg$ with $K\geq 0$, Li and Yau obtained the following global gradient estimate:
\begin{equation}
	\frac{|\nabla u|^2}{u^2}-\alpha\frac{u_t}{u}\leq\frac{n\alpha^2K}{2(\alpha-1)}+\frac{n\alpha^2}{2t}\quad\text{for any}\quad\alpha>1.
\end{equation}
This estimate allowed them to obtain important upper and lower bounds for the heat kernel. This gradient estimate is commonly referred to as the differential Harnack inequality.

In 1989, Davies \cite{D} improved the above estimate to
\begin{equation}\label{1.3}
	\frac{|\nabla u|^2}{u^2}-\alpha\frac{u_t}{u}\leq\frac{n\alpha^2K}{4(\alpha-1)}+\frac{n\alpha^2}{2t}\quad\text{for any}\quad\alpha>1.
\end{equation}

In 1993, Hamilton \cite{H} derived the following gradient estimate under the same condition on a closed manifold:
\begin{equation}\label{1.4}
	\frac{|\nabla u|^2}{u^2}-e^{2Kt}\frac{u_t}{u}\leq\frac{ne^{4Kt}}{2t}.
\end{equation}
To the best of our knowledge, Hamilton was the first to consider the coefficient of $\frac{u_t}{u}$ as a function of time. 

In 1999, Bakry and Qian \cite[Theorem 3]{BQ} obtained the following linear-type estimate:
\begin{equation}\label{1.5}
	\frac{|\nabla u|^2}{u^2}-(1+\frac{2Kt}{3})\frac{u_t}{u}\leq\frac{n}{2t}+\frac{nK}{2}(1+\frac{1}{3}Kt).
\end{equation}

In 2011, J.F.Li and X.J.Xu \cite[Theorem 1.1]{LX} generalized Bakry and Qian's result to the following nonlinear type:
\begin{equation}\label{1.6}
\frac{|\nabla u|^2}{u^2}-\alpha(t)
\frac{u_t}{u}\leq \varphi(t).
\end{equation}
Here, $\alpha(t)=1+\frac{\sinh(Kt)\cosh(Kt)-1}{\sinh^2(Kt)}$, $\varphi(t)=\frac{nK}{2}[\coth(Kt)+1]$.

In addition, the authors of this paper also obtained the Bakry-Qian estimate using a different method (see \cite[Theorem 1.2]{LX}). Following the work of Li and Xu, B. Qian \cite{Q} provided a general estimate for the heat equation and summarized the previous results, excluding Hamilton's estimate. In 2017, Bakry, Bolley, and Gentil \cite{BBG} derived a more general estimate for the heat equation under a curvature-dimension condition, which encompassed all previous results using a unified method. In the subsequent description, we will refer to inequalities \eqref{1.5} and \eqref{1.6} as the linear Li-Xu type and Li-Xu type gradient estimates, respectively.

Another direction in generalizing the original Li-Yau gradient estimate is to consider nonlinear parabolic equations. In 1991, J.Y. Li \cite{JL} studied the following equation
\begin{equation}
	\partial_t u=\Delta u+h(x,t)u^p,
\end{equation}
where $p>0$ and $h\in C^{2,1}(\mathbf{M^n}\times[0,\infty))$. They obtained a Li-Yau gradient estimate for equation (1.8) under certain additional conditions on the positive function $h$. They also derived a Liouville theorem for the elliptic equation associated with (1.8), which relaxed the conditions on $h$ originally established by Gidas and Spruck \cite{GS} for the case $1<p<\frac{n}{n-2}$ and $n\geq 4$.

In 2006, Ma \cite{M} investigated the following important equation arising from the Ricci soliton equation:
\begin{equation}
	\partial_t u=\Delta u+au\ln u+bu,
\end{equation}
where $a,b\in\mathbb{R}$ and $a\neq 0$. They obtained Li-Yau type gradient estimates for the case $a<0$. Later, Yang \cite{Y1} derived Li-Yau gradient estimates for the case $a\neq 0$. In 2013, Cao, Ljungberg, and Liu \cite{CLL} improved the Li-Yau estimate on a complete manifold to a sharp estimate for equation (1.9) with $a>0$ under the assumption of non-negative Ricci curvature. They also used these inequalities to derive bounds for the elliptic case of equation (1.9), which were first studied by Chung and Yau \cite{CY}.

Another direction in generalizing the Li-Yau estimate is to consider possible generalizations of Ricci curvature. The Bakry-Émery Ricci curvature, defined as
\begin{equation}
	Ric_V = Ric - \frac{1}{2} L_V g,
\end{equation}
\begin{equation}
	Ric_V^m = Ric_V - \frac{1}{m-n} V^{\flat} \otimes V^{\flat},
\end{equation}
is considered to be a suitable generalization. Here, $m>n$, $V^{\flat}$ is the dual $1$-form of a smooth vector field $V$, and $L_V g$ is the Lie derivative of the metric with respect to $V$. The $\infty$-dimensional and $m$-dimensional Bakry-Émery Ricci curvatures are denoted as $Ric_V$ and $Ric_V^m$, respectively. When $V=-\nabla f$ for some smooth function $f$, the Bakry-Émery Ricci curvature is denoted as $Ric_f$ and $Ric_f^m$.

In 2005, X.D. Li \cite{XL} obtained the Li-Yau estimate under the condition $Ric_f^m \geq -Kg$ on smooth metric measure spaces (or weighted Riemannian manifolds). Hence, his estimate can be considered as a generalization of the Li-Yau estimate. In 2014, Y. Li \cite{YL} further generalized \cite{XL} to the condition $Ric_V^m \geq -K$ using the same technique as in \cite{XL}. It is also worth mentioning that Munteanu and Wang \cite{MW} obtained the Cheng-Yau estimate for $f$-harmonic functions under the condition $Ric_f \geq 0$, with a sharp condition on $f$.

In this paper, we consider a positive function $u(x,t)\in C^{2,1}(\mathbf{M^{n}}\times [0,\infty))$ that solves the nonlinear parabolic equation
\begin{equation}\label{oeq}
	\partial_t u=\Delta_V u+H(u),
\end{equation}
where $\Delta_{V}u:=\Delta u+\left\langle V,\nabla u\right\rangle$, $H\in C^2(0,\infty)$, and $V$ is a $C^1$ vector field. We assume that $(\mathbf{M^{n}},g)$ is an $n$-dimensional complete Riemannian manifold with bounded below Bakry-Émery Ricci curvature. If $\mathbf{M^n}$ is compact, we also allow it to have a convex boundary.

Our main goal in this paper is to establish a unified method for deriving differential Harnack inequalities for solutions of equation (3.1). We provide new results that strengthen and generalize previous work, especially under Ricci non-negative conditions. The differential Harnack inequalities we obtain are sharp for certain specific equations. It should be noted that our method is also applicable to more general equations of the form (3.1) under additional conditions on the first and second partial derivatives of $H(x,t,u)$.

The paper is organized as follows. In Section 2, we state our main results. In Section 3, we prove some basic lemmas and differential Harnack inequalities on compact manifolds. In Section 4, we extend these results to complete Riemannian manifolds. In Sections 5 and 6, under the assumption of non-negative Bakry-Émery Ricci curvature, we derive sharp differential Harnack inequalities, Harnack inequalities, and Liouville-type theorems for logarithmic-type and Yamabe-type equations, respectively. Finally, in the appendix, we provide solutions to $A_3$-systems (see Definition 2.2) for specific equations.

Throughout the paper, universal constants that depend only on the dimension of the manifold will be denoted by $C$ (possibly different in different instances).

\section{\textbf{Main results}}
\subsection{\textbf{Definition of related systems}}
In order to provide a brief overview, we will define some ODE systems that will be used in our analysis. Readers can skip these definitions initially and refer back to them when necessary in the subsequent sections.\\
\textbf{Definition 2.1} Let $\mathbf{M^{n}}$ be a closed or compact manifold with convex boundary. Let $m\in(0,\infty)$, $K\in [0,\infty)$, and $f(x,t)$ be a smooth function on $\mathbf{M^{n}}\times(0,\infty)$.
We call a system an $A_1$-system if the following conditions hold on $\mathbf{M^{n}}\times(0,\infty)$:
\begin{align}
	\begin{cases}
		\frac{4\gamma}{m}c+(\alpha-\gamma)h'(f)+\alpha h''(f)-2K\gamma-\gamma'\ge \frac{\gamma}{\alpha}\left(\frac{4\gamma}{m}c-\alpha'\right)\nonumber\\
		\varphi'-\frac{2\gamma}{m}c^2+\frac{\varphi}{\alpha}\Big(\frac{4\gamma}{m}c-\alpha h'(f)-\alpha'\Big)\ge 0\nonumber\\
		\frac{4\gamma}{m}c-\alpha h'(f)-\alpha'>0\quad\text{and}\quad \alpha,\gamma>0\nonumber
	\end{cases}
\end{align}
Here, $\gamma$, $\alpha$, $\varphi$, and $c$ are $C^1$ functions of time $t$, defined on $(0,\infty)$. $h\in C^2(I)$ and $I$ is an open interval containing the image of $f$.

If an $A_1$-system also satisfies the following boundary condition:
\begin{align}
	\begin{cases}
		\lim\limits_{t\to0^{+}}\alpha \qquad\text{exists}\\
		\lim\limits_{t\to0^{+}}\gamma \qquad\text{exists}\\
		\lim\limits_{t\to0^{+}}\varphi=\infty,\nonumber
	\end{cases}
\end{align}
we call it an $A_2$-system.

More precisely, we call the functions $\gamma$, $\alpha$, $\varphi$, $c$, $h$, and $f$ satisfy the $A_1$-system (or $A_2$-system).

\noindent\textbf{Definition 2.2} Let $(\mathbf{M^{n}},g)$ be a complete Riemannian manifold. Let $m\in(0,\infty)$, $K\in [0,\infty)$ and $f(x,t)$ be a smooth function on $\mathbf{M^{n}}\times(0,\infty)$.
 We call a system an $A_3$-system if 
\begin{align}
	\begin{cases}
			\frac{4\gamma}{m}c+(\alpha-\gamma)h'(f)+\alpha h''(f)-2K\gamma-\gamma'\ge \frac{\gamma}{\alpha}\Big(\frac{4\gamma}{m}c-\alpha'\Big)\nonumber\\	
		\varphi'-\frac{2\gamma}{m}c^2+\frac{\varphi}{\alpha}\Big(\frac{4\gamma}{m}c-\alpha h'(f)-\alpha'\Big)\ge 0\nonumber\\
		h'(f)+\frac{\alpha'}{\alpha}+\frac{\beta'}{\beta}-\frac{4\gamma\varphi}{m\alpha^2}\le 0\nonumber\\
		\beta(0)=0\quad\text{and}\quad\beta >0\nonumber
	\end{cases}
\end{align}
 on $\mathbf{M^{n}}\times(0,\infty)$ and any one of the following three conditions 
\begin{equation}
	\min\{\alpha-\gamma,\gamma\}\ge\epsilon>0,\quad \text{and} \quad \alpha,\beta \quad \text{are non-decreasing}\qquad(I)\nonumber
\end{equation}
\begin{equation}
\alpha>\gamma>0, \quad\text{and}\quad\alpha,\frac{\alpha^2\beta}{\gamma} \quad\text{are non-decreasing,}\quad	\frac{\beta}{\gamma^2(\alpha-\gamma)} \quad\text{is bounded on}\quad (0,\infty)\quad(II)\nonumber
\end{equation}
\begin{equation}
\alpha>\gamma>0, \quad\text{and}\quad\alpha,\frac{\alpha^2}{\gamma},\beta \quad\text{are non-decreasing,}\quad	\frac{\beta}{\alpha-\gamma} \quad\text{is bounded on}\quad (0,\infty)\qquad(III)\nonumber
\end{equation}
holds. Here, $\gamma$, $\alpha$, $\varphi$, and $c$ are $C^1$ functions of time defined on $(0,\infty)$. $\beta\in C^1([0,\infty))$, $h$ is a $C^2$ function on an interval $I$ that contains the image of $f$. 
More precisely, we also call the functions $\gamma$, $\alpha$, $\varphi$, $\beta$, $h$, $c$, and $f$ satisfy the $A_3$-system.

\noindent\textbf{Remark 2.1}\\
(i) The $A_2$-system mainly deals with the differential Harnack inequalities on closed manifolds or compact manifolds with convex boundary. The $A_3$-system mainly deals with the differential Harnack inequalities on complete noncompact manifolds.\\
(ii) We can also define $A_1$-$A_3$ systems on a finite time interval and a finite geodesic ball $B(x_0,R)$ (an open geodesic ball with center $x_0$ and radius $R$) for general consideration. In this case, we just need to change $\mathbf{M^{n}}$ and $(0,\infty)$ to $B(x_0,R)$ and $(0,T]$ in Definitions 1.1 and 1.2 for some $R<\infty$ and $T<\infty$.
We also call the corresponding system a local $A_3$-system. To be precise, we also call the functions $\gamma$, $\alpha$, $\varphi$, $\beta$, $h$, $c$, and $f$ satisfy the local $A_3$-system on $B(x_0,R)\times(0,T]$.

\subsection{\textbf{General differential Harnack inequalities}}

\begin{thm}\label{CGD}
Let $(\mathbf{M^{n}},g)$ be an $n$-dimensional closed Riemannian manifold with $Ric_V^m \ge-Kg$ for $m>n$ and $K\ge 0$. Let $u(x,t)$ be a smooth positive solution of \eqref{oeq} on $\mathbf{M^{n}}\times[0,\infty)$, $f:=\ln u$, and $h(t):=\frac{H(e^t)}{e^t}$. If there are smooth functions $\gamma$, $\alpha$, $\varphi$, $c$ defined on $(0,\infty)$ and $f$, $h$ satisfy the $A_2$-system, then we have the following global differential Harnack inequality:
\begin{equation}\label{1.11}
	\gamma\frac{|\nabla u|^2}{u^2}-\alpha \frac{u_t}{u}+\alpha h(f)-\varphi\leq 0\qquad\text{on}\quad \mathbf{M^{n}}\times(0,\infty).
\end{equation}
If $(\mathbf{M^{n}},g)$ is an $n$-dimensional compact manifold with convex boundary, we consider the positive solution of the Neumann problem:
\begin{align}
	\begin{cases}
		\partial_t u=\Delta_{V}u+H(u)\quad\text{on}\quad \mathbf{M^{n}}\\
		\frac{\partial u}{\partial \textbf{n}}=0 \qquad\qquad\qquad\text{on}\quad\partial\mathbf{M^{n}},
	\end{cases}
\end{align}
where $\textbf{n}$ is the unit outer normal vector field. Then the same estimate \eqref{1.11} still holds.

\end{thm}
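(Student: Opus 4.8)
The plan is to run a maximum principle argument on a cleverly chosen auxiliary quantity, exactly as in Li–Yau style proofs, but with the time-dependent coefficients $\gamma,\alpha,\varphi$ left free and then constrained precisely by the $A_2$-system. Set $f=\ln u$, so that $f$ satisfies the transformed equation $\partial_t f = \Delta_V f + |\nabla f|^2 + h(f)$ (using $\partial_t u/u = \Delta_V u/u - |\nabla u|^2/u^2 + \ldots$ and $h(t)=H(e^t)/e^t$). Introduce the function
\begin{equation*}
	F := t\bigl(\gamma|\nabla f|^2 - \alpha f_t + \alpha h(f) - \varphi\bigr),
\end{equation*}
or more simply work with $P:=\gamma|\nabla f|^2-\alpha f_t+\alpha h(f)-\varphi$ directly, since the $A_2$ boundary conditions ($\lim_{t\to0^+}\alpha,\gamma$ exist, $\lim_{t\to0^+}\varphi=\infty$) already force $P<0$ near $t=0$ and make a cutoff in $t$ unnecessary on a closed manifold. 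The goal is to show $P\le 0$ on $\mathbf{M^n}\times(0,\infty)$. Suppose not; then at a first point/time where $\sup_{\mathbf{M^n}} P$ reaches $0$ (or by a standard barrier/compactness argument, at an interior maximum on a finite interval), we have $P=0$, $\nabla P=0$, $\Delta_V P\le 0$, and $\partial_t P\ge 0$.

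Next I would compute $(\Delta_V - \partial_t)P$ using the Bochner formula for $\Delta_V|\nabla f|^2$, namely
\begin{equation*}
	\Delta_V|\nabla f|^2 = 2|\nabla^2 f|^2 + 2\langle\nabla f,\nabla\Delta_V f\rangle + 2\,Ric_V(\nabla f,\nabla f),
\end{equation*}
together with the refined Cauchy–Schwarz bound $|\nabla^2 f|^2 \ge \frac{(\Delta_V f)^2}{m}$ that holds under the $m$-dimensional Bakry–Émery condition $Ric_V^m\ge -Kg$ (this is the standard trick: $Ric_V^m\ge -Kg$ gives both $Ric_V(\nabla f,\nabla f)\ge -K|\nabla f|^2 - \frac{1}{m-n}\langle V,\nabla f\rangle^2$ and, after absorbing the $V$-term, the clean inequality $|\nabla^2 f|^2 + Ric_V(\nabla f,\nabla f) \ge \frac{(\Delta_V f)^2}{m} - K|\nabla f|^2$). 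Write $\Delta_V f = c_{\mathrm{aux}}$ schematically and substitute $\Delta_V f = f_t - |\nabla f|^2 - h(f)$ from the evolution equation. After differentiating the evolution equation in $t$ to handle $\partial_t f_t$ and in space to handle $\langle\nabla f,\nabla f_t\rangle$, one collects all terms. At the maximum point I would use $\nabla P = 0$ to express $\langle\nabla f,\nabla f_t\rangle$ and $\langle\nabla f,\nabla h(f)\rangle = h'(f)|\nabla f|^2$ in terms of $|\nabla f|^2$ and the time derivatives of $\gamma,\alpha$; this is exactly the algebraic manipulation that produces the quantities $\frac{\gamma}{\alpha}(\frac{4\gamma}{m}c-\alpha')$ and the coefficient $\frac{4\gamma}{m}c-\alpha h'(f)-\alpha'$ appearing in the $A_2$-system. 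One then groups the resulting expression as a sum: a multiple of $|\nabla f|^2$ whose coefficient is controlled by the first $A_2$-inequality, a term quadratic in $f_t$ (or in $\Delta_V f$) with a favorable sign from the $\frac{(\Delta_V f)^2}{m}$ gain, and a zeroth-order piece controlled by the second $A_2$-inequality $\varphi' - \frac{2\gamma}{m}c^2 + \frac{\varphi}{\alpha}(\frac{4\gamma}{m}c-\alpha h'(f)-\alpha')\ge 0$. The third $A_2$-condition $\frac{4\gamma}{m}c - \alpha h'(f) - \alpha' > 0$ (with $\alpha,\gamma>0$) is what guarantees the right signs when completing the square and when using $P=0$ to eliminate $\varphi$. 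The upshot is $(\Delta_V-\partial_t)P \ge 0$ at that point, contradicting $\Delta_V P - \partial_t P \le 0$ unless the maximum is never actually attained with value $\ge 0$ — and the boundary behavior from the $A_2$ limits rules that out.

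The main obstacle, and the step I would be most careful about, is the bookkeeping in the Bochner computation: correctly identifying the "$c$" in the system with $\Delta_V f$ (or with $-u_t/u$-type quantities), tracking how the substitution $\Delta_V f = f_t - |\nabla f|^2 - h(f)$ interacts with both the $h'(f)$ and $h''(f)$ terms (the $h''(f)$ enters via $\Delta_V(h(f)) = h'(f)\Delta_V f + h''(f)|\nabla f|^2$), and verifying that every term of the final expression is absorbed by exactly one of the three $A_2$-inequalities with no leftover. A secondary point requiring care is the Neumann case: one must check that the auxiliary function's normal derivative has the right sign on $\partial\mathbf{M^n}$. Here convexity of the boundary enters through the Reilly-type identity — for a solution with $\partial u/\partial\mathbf{n}=0$ one has $\partial|\nabla f|^2/\partial\mathbf{n} = -2\,\mathrm{II}(\nabla f,\nabla f)\le 0$ by convexity, and $\partial f_t/\partial\mathbf{n}=0$, $\partial h(f)/\partial\mathbf{n}=h'(f)\partial f/\partial\mathbf{n}=0$ — so $\partial P/\partial\mathbf{n}\le 0$, which forbids an interior-type maximum from occurring on the boundary via the Hopf lemma. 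With these two ingredients the closed and convex-boundary cases are handled by the same argument.
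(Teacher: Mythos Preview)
Your overall architecture (Bochner formula, the $m$-dimensional refinement $|\nabla^2 f|^2 + Ric_V(\nabla f,\nabla f)\ge \frac{(\Delta_V f)^2}{m}-K|\nabla f|^2$, parabolic maximum principle, and the Hopf/second-fundamental-form argument for the Neumann case) is exactly the paper's, and your boundary argument is correct as stated.

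The one genuine gap is your treatment of $c$. You propose to ``identify the $c$ in the system with $\Delta_V f$'', but $c=c(t)$ is a \emph{given} function of time only, supplied as part of the $A_2$-system hypothesis --- it is not $\Delta_V f$ and is not something that emerges from the computation. Its role is the completing-the-square trick
\[
\frac{2\gamma}{m}(\Delta_V f)^2=\frac{2\gamma}{m}(\Delta_V f+c)^2-\frac{2\gamma}{m}c^2-\frac{4\gamma c}{m}\,\Delta_V f,
\]
after which the linear piece $-\frac{4\gamma c}{m}\Delta_V f$ is rewritten via $\Delta_V f=f_t-|\nabla f|^2-h(f)$. This is precisely what manufactures the coefficients $\frac{4\gamma}{m}c$, $\frac{2\gamma}{m}c^2$, and $\frac{4\gamma}{m}c-\alpha h'(f)-\alpha'$ appearing in the three $A_2$-inequalities. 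Carrying this through (and \emph{not} using $\nabla P=0$ to kill cross terms, but instead keeping the drift term $-2\langle\nabla f,\nabla F\rangle$ explicit) yields the pointwise inequality
\[
\mathscr{L}F+2\langle\nabla f,\nabla F\rangle\ \ge\ \frac{2\gamma}{m}(\Delta_V f+c)^2+\frac{F}{\alpha}\Bigl(\frac{4\gamma}{m}c-\alpha h'(f)-\alpha'\Bigr),
\]
valid everywhere, with the first two $A_2$-inequalities absorbing the $|\nabla f|^2$-coefficient and the zeroth-order residue respectively. The third $A_2$-condition then makes the coefficient of $F$ strictly positive, so on the set $\{F>0\}$ one has $\mathscr{L}F+2\langle\nabla f,\nabla F\rangle>0$; the weak parabolic maximum principle (together with the $A_2$ boundary behavior forcing $F\le0$ near $t=0$, as you noted) finishes the closed case. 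Without the $c$-shift your bookkeeping will not line up with the hypotheses you are given.
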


\begin{thm}\label{GD}
	Let $(\mathbf{M^{n}},g)$ be an $n$-dimensional complete Riemannian manifold with $Ric_V^m \ge-Kg$ for $m>n$ and $K\ge 0$. Let $u(x,t)$ be a smooth positive solution to the equation
	\eqref{oeq} on $B(x_0,2R)\times[0,\infty)$, $f:=\ln u$ and $h(t):=\frac{H(e^t)}{e^t}$. If there are functions $\gamma$, $\alpha$, $\varphi$, $\beta$, $c$ defined on $(0,\infty)$ and $f$, $h$ satisfy local $A_3$-system on $B(x_0,2R)\times(0,\infty)$, then we have the following  local differential Harnark inequalities:
	\begin{align}
	&\gamma\frac{|\nabla u|^2}{u^2}-\alpha \frac{u_t}{u}+\alpha h(f)-\varphi\nonumber\\
	\leq&
	\begin{cases}
		\frac{m\alpha^2\beta(T)}{2\epsilon}\Big[\frac{C}{R^2}(1+\sqrt{K}R\coth(\sqrt{K}R))+\frac{m\alpha^2(T)}{4\epsilon^2}\cdot\frac{C}{R^2}\Big]\quad\,\,	\text{if}\quad (I)\quad\text{holds}\vspace{2ex}\nonumber\\
		\frac{m\alpha^2\beta}{2\gamma}(T)\cdot\frac{C}{R^2}(1+\sqrt{K}R\coth(\sqrt{K}R))+m^2\alpha^4(T)\frac{C}{R^2}\quad	\text{if}\quad (II)\quad\text{holds}\vspace{2ex}\nonumber\\
		\frac{m\alpha^2\beta}{2\gamma}(T)\frac{C}{R^2}(1+\sqrt{K}R\coth(\sqrt{K}R))+\frac{m^2\alpha^4}{\gamma^2}(T)\cdot\frac{C}{R^2}\quad\,\,	\text{if}\quad (III)\quad\text{holds}\nonumber
	\end{cases}
	\end{align}
on $B(x_0,R)\times(0,\infty)$.

Moreover, if the solution exists on the whole time-space $\mathbf{M^{n}}\times[0,\infty)$ and the functions $\gamma$, $\alpha$, $\varphi$, $\beta$, $c$ are defined on $(0,\infty)$ and $f$, $h$ satisfy the $A_3$-system, we have the following global differential Harnack inequality:
	\begin{equation}\label{gdhk}
	\gamma\frac{|\nabla u|^2}{u^2}-\alpha \frac{u_t}{u}+\alpha h(f)-\varphi\leq 0\qquad\text{on\,\,\, $\M\times(0,\infty)$.}
	\end{equation}
\end{thm}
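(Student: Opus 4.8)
The plan is to run a localized maximum-principle argument for a single auxiliary function on geodesic balls, and then let $R\to\infty$ for the global inequality. Write $f:=\ln u$, so that \eqref{oeq} becomes $f_t=\Delta_V f+|\nabla f|^2+h(f)$ with $h(t)=H(e^t)/e^t$, and the left-hand side of the asserted inequality equals
\[
P:=\gamma\,\frac{|\nabla u|^2}{u^2}-\alpha\,\frac{u_t}{u}+\alpha h(f)-\varphi=(\gamma-\alpha)|\nabla f|^2-\alpha\,\Delta_V f-\varphi .
\]
I would work with $F:=\beta P$, which because $\beta(0)=0$ plays the role of the factor $t$ in the classical Li--Yau scheme; the two conclusions then read $F\le(\text{error})$ on $B(x_0,R)\times(0,\infty)$ and $F\le0$ on $\M\times(0,\infty)$.

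The first step is to derive a pointwise differential inequality for $F$. I would differentiate $F$ in time, apply the weighted Bochner formula $\frac12\Delta_V|\nabla f|^2=|\nabla^2 f|^2+\langle\nabla f,\nabla\Delta_V f\rangle+Ric_V(\nabla f,\nabla f)$ together with
\[
|\nabla^2 f|^2+Ric_V(\nabla f,\nabla f)\ \ge\ \frac{(\Delta_V f)^2}{m}-K|\nabla f|^2 ,
\]
which follows from $Ric_V^m\ge-Kg$ and the elementary inequality $\frac{(\Delta f)^2}{n}+\frac{\langle V,\nabla f\rangle^2}{m-n}\ge\frac{(\Delta_V f)^2}{m}$, and then substitute $\Delta_V f=\frac1\alpha[(\gamma-\alpha)|\nabla f|^2-\varphi-P]$ and regroup. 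The outcome should be a Li--Yau--Hamilton type inequality
\[
(\Delta_V-\partial_t)F\ \ge\ \langle X,\nabla F\rangle+\frac{2\gamma}{m\alpha^2\beta}\,F^2+\bigl(\text{a nonnegative multiple of }|\nabla f|^4\bigr)
\]
valid wherever $F\ge0$, with $X$ an explicit vector field proportional to $\nabla f$; here the first inequality of the $A_3$-system is what absorbs the coefficient of $|\nabla f|^2$, the second what absorbs the constant term, and the third what absorbs the coefficient of the term linear in $F$ coming from $\beta'/\beta$ (and from the cross term $\tfrac{4\gamma\varphi}{m\alpha^2}F$). This is essentially the computation that already underlies Theorem~\ref{CGD} in the compact case, now kept on the ball $B(x_0,2R)$.

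Next I would localize. Fix $T<\infty$ and choose $\psi\in C^\infty(\M)$ with $0\le\psi\le1$, $\psi\equiv1$ on $B(x_0,R)$, $\psi\equiv0$ outside $B(x_0,2R)$, $|\nabla\psi|^2\le\tfrac{C}{R^2}\psi$, and, by the weighted Laplacian comparison theorem under $Ric_V^m\ge-Kg$, $\Delta_V\psi\ge-\tfrac{C}{R^2}\bigl(1+\sqrt K\,R\coth(\sqrt K\,R)\bigr)$. I would then apply the maximum principle to $\psi F$ on $\overline{B(x_0,2R)}\times(0,T]$: if it is $\le0$ there is nothing to prove, and otherwise, using $\beta(0)=0$ to exclude the time-boundary $t\to0^+$, $\psi F$ attains a positive maximum at an interior point $(x_1,t_1)$ with $t_1>0$, where $\nabla(\psi F)=0$, $\Delta_V(\psi F)\le0$ and $\partial_t(\psi F)\ge0$. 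Substituting $\nabla F=-(F/\psi)\nabla\psi$, inserting the differential inequality above and the cut-off bounds, and applying Young's inequality---against the $F^2$-term for the contributions of $F\,\Delta_V\psi$ and $|\nabla\psi|^2F/\psi$, and against the $|\nabla f|^4$-term for the contribution of $\langle X,\nabla\psi\rangle$---I would arrive at $(x_1,t_1)$ at an inequality of the form $\frac{2\gamma}{m\alpha^2\beta}(\psi F)^2\le(\psi F)\cdot E+(\psi F)^{4/3}\cdot E'$, where $E$ and $E'$ are built out of the cut-off constants; solving it bounds $\psi F$ at $(x_1,t_1)$, hence $F$ on $B(x_0,R)\times(0,T]$, by the right-hand side claimed in the theorem (and then on $B(x_0,R)\times(0,\infty)$ by taking $T\ge t$ at each point).

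The hard part will be the final bookkeeping that produces the three cases. Each error term picks up powers of $\gamma$, of $\alpha-\gamma$ and of $\beta$ from the Young splitting and has to be pushed to time $T$: under $(I)$ one uses $\min\{\gamma,\alpha-\gamma\}\ge\epsilon$ together with $\alpha,\beta$ non-decreasing; under $(II)$ the boundedness of $\beta/(\gamma^2(\alpha-\gamma))$ together with $\alpha$ and $\alpha^2\beta/\gamma$ non-decreasing; and under $(III)$ the boundedness of $\beta/(\alpha-\gamma)$ together with $\alpha$, $\alpha^2/\gamma$ and $\beta$ non-decreasing. The Bochner and Laplacian-comparison inputs are, by contrast, routine once the $A_3$-inequalities have been written out. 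Finally, for the global inequality \eqref{gdhk}: when $u$ exists on all of $\M\times[0,\infty)$ and the functions satisfy the non-local $A_3$-system, each of the three right-hand sides is $O(R^{-1})$ as $R\to\infty$ (the factor $\sqrt K\,R\coth(\sqrt K\,R)$ is bounded by $C(1+\sqrt K\,R)$ and appears multiplied by $R^{-2}$, and every other term is $O(R^{-2})$), hence tends to $0$ at every fixed point and for every fixed choice of the auxiliary functions; therefore $F=\beta P\le0$ on $\M\times(0,\infty)$, and since $\beta>0$ there we conclude $P\le0$, which is exactly \eqref{gdhk}.
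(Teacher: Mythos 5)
Your overall architecture is the same as the paper's: you work with the paper's quantity $P=\gamma|\nabla f|^2-\alpha f_t+\alpha h(f)-\varphi$, form the auxiliary function $\psi\beta P$ (the paper's $G=\beta\phi F$), build the cut-off from the Bakry--Qian weighted Laplacian comparison, run the maximum principle (with $\beta(0)=0$ excluding the initial time), do the case (I)--(III) bookkeeping at time $T$, and let $R\to\infty$ for \eqref{gdhk}. The genuine gap is in the pointwise differential inequality you assert: after substituting $\Delta_V f=\tfrac1\alpha\bigl[(\gamma-\alpha)|\nabla f|^2-\varphi-P\bigr]$ and invoking the first and second $A_3$-inequalities (which are formulated through the auxiliary function $c$, absent from your sketch), the nonnegative remainder is \emph{not} a multiple of $|\nabla f|^4$; it is the complete square $\tfrac{2\gamma}{m\alpha^2}\bigl[(\alpha-\gamma)|\nabla f|^2+\varphi-\alpha c\bigr]^2$ together with the term $\tfrac{4\gamma(\alpha-\gamma)}{m\alpha^2}P|\nabla f|^2$ (linear in $P$). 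The $A_3$-system does not force $\varphi\ge\alpha c$ --- e.g.\ for the Li--Yau/Davies solution in Appendix 7.1 one has $\varphi-\alpha c=-\tfrac{m\alpha^2K}{4(\alpha-1)}<0$ --- so extracting a pure $|\nabla f|^4$ term from that square costs an additive error of order $\tfrac{\gamma}{m\alpha^2}(\varphi-\alpha c)^2$. This error is independent of $R$, so after solving your inequality $aG^2\le GE+G^{4/3}E'$ (which then acquires a zeroth-order term $E''$) you pick up an extra contribution of order $\beta\,|\varphi-\alpha c|$ that neither matches the stated local right-hand sides nor tends to $0$ as $R\to\infty$; the global conclusion \eqref{gdhk} would then fail to follow from your argument.

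The repair is exactly the paper's absorption scheme, and it also removes the need for the $G^{4/3}$ branch: at the positive maximum of $G$ one has $G>0$ and $\alpha>\gamma$, so the cut-off cross term $2G\langle\nabla f,\nabla\psi/\psi\rangle$ is absorbed against the retained \emph{linear} term $\tfrac{4\gamma(\alpha-\gamma)}{m\alpha^2}G|\nabla f|^2$ via $2|\nabla\psi|\,|\nabla f|\le\tfrac{4\gamma\psi(\alpha-\gamma)}{m\alpha^2}|\nabla f|^2+\tfrac{m\alpha^2}{4\gamma(\alpha-\gamma)}\tfrac{|\nabla\psi|^2}{\psi}$, while the remaining linear-in-$G$ terms are discarded using the third $A_3$-inequality (as you correctly indicated). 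This produces a purely linear inequality of the form $\tfrac{2\gamma}{m\alpha^2\beta}G\le \tfrac{C}{R^2}\bigl(1+\sqrt K R\coth(\sqrt K R)\bigr)+\tfrac{m\alpha^2}{4\gamma(\alpha-\gamma)}\tfrac{C}{R^2}$, from which the three displayed cases follow directly under (I), (II), (III), and the $R\to\infty$ limit gives $F\le 0$. Two further small points: the maximum point may lie on the cut locus of $x_0$, which the paper handles by Calabi's trick, and your unqualified claim that the differential inequality holds ``wherever $F\ge0$'' should in any case be restricted to the use you actually make of it at the maximum point.
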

\noindent\textbf{Remark 2.2}\\
(i) From our proof below (see Section 3 and Section 4), our results are also valid for solutions on $B(x_0,2R)\times[0,T]$ or $\mathbf{M^{n}}\times[0,T]$ (in this case, we just need to find functions which solve the $A_2$ or $A_3$-system on the time interval $(0,T]$). If one considers a positive solution of equation (1.12) on $B(x_0,2R)\times[T_0,T_0+T]$, the estimate in Theorem 2.2 also holds by time translation. Actually, if our solution is defined on $\mathbf{M^{n}}\times(0,\infty)$ (or $\mathbf{M^{n}}\times(0,T]$), we also have the same estimates as (2.1) and (2.3) by a simple time-translation argument.\\
(ii) To our knowledge, there are few results about differential Harnack inequalities under the condition $Ric_V\ge-Kg$ (without any condition on $V$) due to the lack of an important Laplacian comparison theorem. At the same time, the key lemma below (Lemma 3.3) also requires a lower bound on $Ric^m_V$ rather than $Ric_V$.

\subsection{\textbf{Application to logarithmic type equation}}
In this subsection, we provide sharp differential Harnack inequalities, sharp Harnack inequalities, and Liouville type theorems for the logarithmic type equation under the non-negative Bakry-Émery Ricci curvature condition.

The logarithmic type equation arises from the study of Ricci solitons and Log-Sobolev inequalities (see \cite{CLL,CY,G}). It can be derived from the Ricci soliton equation, as shown in \cite{CLL,CLN,M}. If a function on a compact manifold achieves the sharp constant in the Log-Sobolev inequality, it satisfies the corresponding elliptic equation of the form (2.5) with $a>0$ (see \cite{CY}). We rewrite the equation as follows:
\begin{equation}
	\partial_{t}w=\Delta_{V}w+aw\ln w+bw,
\end{equation}
where $a$ and $b$ are two real constants. If $a=0$, then equation (2.4) becomes a linear equation whose differential Harnack inequalities are directly given in Appendix 7.1. If $a\neq 0$, by letting $w=e^{-\frac{b}{a}}u$, one obtains an equivalent equation as follows:
\begin{equation}\label{logeq}
	\partial_{t}u=\Delta_{V}u+au\ln u.
\end{equation}
Thus, we focus on the equation \eqref{logeq} for simplicity and state our basic results about this equation.

\begin{thm}
	Let $(\mathbf{M^{n}},g)$ be an $n$-dimensional complete Riemannian manifold with $Ric_V^m \ge 0$ for $m>n$. Let $u(x,t)$ be a smooth positive solution to the equation
	\eqref{logeq} on $\mathbf{M^{n}}\times(0,\infty)$ with $a\neq 0$ and $f:=\ln u$. For the following cases:\\
	$(i)$ $\mathbf{M^{n}}$ is closed;\\
	$(ii)$ $\mathbf{M^{n}}$ is complete noncompact;\\
	$(iii)$ $\mathbf{M^{n}}$ is compact with convex boundry and $u$ solves Neumann problem (2.2),\\
	we have the following differential Harnack inequality:
	\begin{equation}\label{sldhk}
		\Delta_{V} f +\frac{ma}{2(1-e^{-at})}\ge 0.
	\end{equation}
	At case $(ii)$, \eqref{sldhk} is sharp.	
\end{thm}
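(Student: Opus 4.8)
The plan is to deduce \eqref{sldhk} from the general differential Harnack inequalities of Theorem~\ref{CGD} (closed, or compact with convex boundary) and Theorem~\ref{GD} (complete noncompact) by making an explicit choice of the data $\gamma,\alpha,\varphi,\beta,c$ entering the $A_2$- and $A_3$-systems. For \eqref{logeq} one has $H(u)=au\ln u$, so $h(t)=H(e^t)/e^t=at$, whence $h'\equiv a$, $h''\equiv 0$; in particular every inequality of those systems becomes a condition on the time functions alone. Writing $f=\ln u$, equation \eqref{logeq} gives the pointwise identity $\Delta_V f=\tfrac{u_t}{u}-af-\tfrac{|\nabla u|^2}{u^2}$ on $\mathbf{M^{n}}\times(0,\infty)$, so that for the choice $\gamma\equiv\alpha\equiv 1$ the left-hand side of the general estimate \eqref{1.11} (resp.\ \eqref{gdhk}) is exactly $-\Delta_V f-\varphi$. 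Hence it suffices to produce admissible system data with $\gamma\equiv\alpha\equiv 1$ (for the noncompact case, an $\varepsilon$-family converging to such data) and with $\varphi=\tfrac{ma}{2(1-e^{-at})}$.

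For the closed and compact-boundary cases $(i)$ and $(iii)$ I would take $K=0$ (permissible since $Ric_V^m\ge 0$), $\gamma\equiv\alpha\equiv 1$ and
\begin{equation*}
\varphi(t)=c(t)=\frac{ma}{2(1-e^{-at})}.
\end{equation*}
A short computation shows $\varphi$ solves the Bernoulli equation $\varphi'=a\varphi-\tfrac2m\varphi^2$. Using this: the first $A_1$-inequality holds with equality (because $\alpha=\gamma$, $h''=0$, $K=0$); the left-hand side of the second $A_1$-inequality equals $-\tfrac2m(c-\varphi)^2$, which is $\ge 0$ precisely when $c=\varphi$, and is then $0$; the strictness requirement reduces to $\tfrac{4c}{m}-a=a\,\tfrac{1+e^{-at}}{1-e^{-at}}>0$ (the signs of $a$ and of $1-e^{-at}$ coincide for $t>0$), while $\alpha,\gamma>0$; and since $1-e^{-at}\sim at$ as $t\to 0^+$, one has $\alpha,\gamma\to 1$ and $\varphi\to\infty$. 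So the $A_2$-system holds, and Theorem~\ref{CGD} yields \eqref{sldhk} in cases $(i)$ and $(iii)$.

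For the complete noncompact case $(ii)$, Theorem~\ref{GD} requires an $A_3$-system, which additionally asks for $\beta\in C^1([0,\infty))$ with $\beta(0)=0$, $\beta>0$, for $h'(f)+\tfrac{\alpha'}{\alpha}+\tfrac{\beta'}{\beta}-\tfrac{4\gamma\varphi}{m\alpha^2}\le 0$, and for one of $(I)$--$(III)$; the borderline choice $\gamma\equiv\alpha$ satisfies none of $(I)$--$(III)$. I would therefore perturb: for small $\varepsilon>0$ set $\gamma_\varepsilon\equiv 1$, $\alpha_\varepsilon\equiv 1+\varepsilon$, let $\varphi_\varepsilon$ be the solution of $\varphi_\varepsilon'=a\varphi_\varepsilon-\tfrac{2}{m(1+\varepsilon)^2}\varphi_\varepsilon^2$ that blows up at $0$, i.e.\ $\varphi_\varepsilon(t)=\tfrac{m(1+\varepsilon)^2 a}{2(1-e^{-at})}$, take $c_\varepsilon=\tfrac{\varphi_\varepsilon}{1+\varepsilon}$ (the unique $c$ making the second $A_3$-inequality an equality) and $\beta_\varepsilon(t)=t^2$. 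For $a>0$ one checks directly that this data satisfies the $A_3$-system with condition $(I)$: $\min\{\alpha_\varepsilon-\gamma_\varepsilon,\gamma_\varepsilon\}=\varepsilon>0$ and $\alpha_\varepsilon,\beta_\varepsilon$ are nondecreasing; the $\tfrac{\beta'}{\beta}$-inequality reduces (the factors $(1+\varepsilon)^2$ cancel) to $\tfrac2t\le a\,\tfrac{1+e^{-at}}{1-e^{-at}}$, an elementary inequality valid on $(0,\infty)$; and the first $A_3$-inequality becomes $c_\varepsilon\ge-\tfrac{ma(1+\varepsilon)}{4}$, immediate since $c_\varepsilon>0$. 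Theorem~\ref{GD} then gives $\tfrac{|\nabla u|^2}{u^2}-(1+\varepsilon)\tfrac{u_t}{u}+(1+\varepsilon)af-\varphi_\varepsilon\le 0$ on $\mathbf{M^{n}}\times(0,\infty)$ (solutions defined only for $t>0$ being reduced to this by the time-translation of Remark 2.2), equivalently $(1+\varepsilon)\Delta_V f\ge-\varphi_\varepsilon-\varepsilon\tfrac{|\nabla u|^2}{u^2}$; since $|\nabla u|^2/u^2=|\nabla f|^2$ is finite at each point, letting $\varepsilon\to 0^+$ (with $\tfrac{\varphi_\varepsilon}{1+\varepsilon}\to\tfrac{ma}{2(1-e^{-at})}$) gives \eqref{sldhk}. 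The case $a<0$ is the main obstacle: there the above $c_\varepsilon$ violates the first $A_3$-inequality for large $t$, so one must instead build in strictly positive slack in the second $A_3$-inequality (choose $\varphi_\varepsilon$ with a slightly larger derivative) to be able to take $c_\varepsilon$ large enough near $t=\infty$; producing, for both signs of $a$, explicit families satisfying all four $A_3$-inequalities together with one of $(I)$--$(III)$ on all of $(0,\infty)$ while keeping $\varphi_\varepsilon\to\tfrac{ma}{2(1-e^{-at})}$ is precisely the role of the appendix.

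Finally, for sharpness in case $(ii)$: on $\mathbb{R}^n$ with $V=0$ one has $Ric_V^m\equiv 0$ for every $m>n$, and the Gaussian
\begin{equation*}
u(x,t)=\exp\!\big(\phi(t)|x|^2+\psi(t)\big),\qquad \phi(t)=-\frac{a}{4(1-e^{-at})},
\end{equation*}
where $\phi$ solves $\phi'=4\phi^2+a\phi$ and $\psi$ solves the linear equation $\psi'=2n\phi+a\psi$ on $(0,\infty)$, is a positive solution of \eqref{logeq} on $\mathbb{R}^n\times(0,\infty)$ with $\Delta_V f=\Delta f=2n\phi(t)=-\tfrac{na}{2(1-e^{-at})}$; hence the left-hand side of \eqref{sldhk} equals $\tfrac{(m-n)a}{2(1-e^{-at})}$, which tends to $0$ as $m\downarrow n$, so the constant $\tfrac m2$ in \eqref{sldhk} cannot be decreased.
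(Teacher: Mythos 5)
Your treatment of cases $(i)$ and $(iii)$, and of case $(ii)$ when $a>0$, is correct and essentially coincides with the paper's own argument: the same data $\gamma=\alpha=1$, $\varphi=c=\frac{ma}{2(1-e^{-at})}$ for the $A_2$-system, and for the noncompact case a family with constant $\alpha>\gamma=1$, $\varphi=\frac{m\alpha^2a}{2(1-e^{-at})}$, $c=\varphi/\alpha$, verified through condition (I) of the $A_3$-system and followed by letting $\alpha\to1^+$ (your $\beta=t^2$ versus the paper's $\beta=1-e^{-at}$ is immaterial); your sharpness example is the paper's Gaussian family (5.27).

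The genuine gap is case $(ii)$ with $a<0$, which is precisely the new and difficult part of this theorem (cf.\ Remark 2.3(ii)--(iii)), and you do not prove it: you only observe that your $c_\varepsilon$ violates the first $A_3$-inequality and suggest adding ``slack'' to the second inequality, deferring to the appendix. Two problems. First, the appendix does not supply what you need: the logarithmic-equation solutions of Section 7.2 are non-sharp (and stated for general $K$), and none of them has $\varphi$ tending to $\frac{ma}{2(1-e^{-at})}$ when $a<0$. Second, your suggested fix cannot work on all of $(0,\infty)$ with constant $\alpha>\gamma$: for $a<0$ the first $A_3$-inequality forces $c\ge\frac{m\alpha|a|}{4\gamma}$, a positive constant, while the target $\varphi_0=\frac{ma}{2(1-e^{-at})}$ tends to $0$ as $t\to\infty$; the second inequality then requires $\varphi'\ge\frac{2\gamma}{m}c^2-\frac{\varphi}{\alpha}\big(\frac{4\gamma}{m}c-\alpha a\big)$, whose right-hand side is bounded below by a positive constant once $\varphi$ is small, so any admissible $\varphi$ must eventually grow and cannot stay near $\varphi_0$; no amount of extra slack in $\varphi'$ removes this obstruction. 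The paper's actual proof of this case is structurally different: it takes $\alpha\equiv1$, a time-dependent non-increasing $\gamma=\frac{1}{1+l}$, $\varphi=c=\frac{1}{\gamma}\varphi_0$, $\beta=e^{-at}-1$, reduces the first $A_3$-inequality to a differential inequality for $l$, solves the Cauchy problem (5.13) with small initial datum $\epsilon$, shows (Claims 1--2) that the maximal existence time $A_\epsilon\to\infty$ as $\epsilon\to0^+$, applies the finite-time-interval version of Theorem~\ref{GD} (Remark 2.2(i)) on $(0,A_\epsilon+\frac{\ln 3}{-a}]$, and only then passes to the limit $\epsilon\to0^+$ to get \eqref{sldhk} on all of $(0,\infty)$. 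Without this (or a genuine substitute exploiting time-dependent $\gamma$ and estimates on exhausting finite intervals), your argument establishes case $(ii)$ only for $a>0$.
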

\begin{thm}
	Let $(\mathbf{M^{n}},g)$ be an n-dimensional complete Riemannian manifold. Let $u$ be a positive solution of \eqref{logeq} on $\mathbf{M^{n}}\times(0,\infty)$ which possesses  global differential Harnark inequality \eqref{gdhk}. If we set $f:=\ln u$ and suppose that $x_1$, $x_2\in \mathbf{M^{n}}$, $0<t_1<t_2$, then we have the following Harnack inequality:
\begin{equation}
e^{-at_2}f(x_2,t_2)-e^{-at_1}f(x_1,t_1)\ge\int_{t_1}^{t_2}e^{-at}\left(\frac{-\varphi}{\alpha}-\frac{\alpha}{4\gamma}\left|\frac{dl}{dt}\right|^2\right)dt,	
\end{equation}
where $l:[t_1,t_2]\to\mathbf{M^{n}}$ is a smooth path connecting $x_1$ and $x_2$.

Moreover, if $\alpha$,$\gamma$ are two positive constants, we have 
\begin{equation}\label{loghk}
e^{-at_2}f(x_2,t_2)-e^{-at_1}f(x_1,t_1)\ge\int_{t_1}^{t_2}-e^{-at}\cdot \frac{\varphi}{\alpha} dt-\frac{\alpha}{4\gamma}\cdot\frac{a\cdot d(x_1,x_2)^2}{e^{at_2}-e^{at_1}}.	
\end{equation}	
Especially, at case $(ii)$ in Theorem 2.3, the corresponding Harnack inequality of \eqref{sldhk} is also sharp. 	
		
\end{thm}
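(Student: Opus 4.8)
The plan is to integrate the differential Harnack inequality \eqref{gdhk} along a space-time path, exactly in the spirit of the classical Li–Yau argument adapted to the logarithmic equation. First I would rewrite \eqref{gdhk} in a form that isolates $\partial_t f$: dividing by $\alpha>0$ and using $\frac{|\nabla u|^2}{u^2}=|\nabla f|^2$, $\frac{u_t}{u}=f_t$, the inequality \eqref{gdhk} becomes
\begin{equation}
f_t \ge \frac{\gamma}{\alpha}|\nabla f|^2 + h(f) - \frac{\varphi}{\alpha}\qquad\text{on }\M\times(0,\infty),\nonumber
\end{equation}
where for equation \eqref{logeq} we have $h(f)=\frac{H(e^f)}{e^f}=\frac{a e^f \ln e^f}{e^f}=af$. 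Thus $f_t - af \ge \frac{\gamma}{\alpha}|\nabla f|^2 - \frac{\varphi}{\alpha}$, i.e. $e^{-at}\frac{d}{dt}\bigl(e^{-at}f\bigr)^{\!*}$ — more precisely $\frac{d}{dt}\bigl(e^{-at}f(l(t),t)\bigr) = e^{-at}\bigl(f_t - af\bigr) + e^{-at}\langle\nabla f,\dot l\rangle$ along a path $l(t)$. The key observation is that the troublesome term $e^{-at}\langle\nabla f,\dot l\rangle$ can be absorbed using the gradient term via the elementary inequality $\langle\nabla f,\dot l\rangle \ge -\frac{\gamma}{\alpha}|\nabla f|^2 - \frac{\alpha}{4\gamma}|\dot l|^2$ (Cauchy–Schwarz plus Young), which exactly cancels the $\frac{\gamma}{\alpha}|\nabla f|^2$ contribution.

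Concretely, the second step is: along $l$,
\begin{equation}
\frac{d}{dt}\bigl(e^{-at}f(l(t),t)\bigr) \ge e^{-at}\Bigl(\frac{\gamma}{\alpha}|\nabla f|^2 - \frac{\varphi}{\alpha}\Bigr) + e^{-at}\langle\nabla f,\dot l\rangle \ge e^{-at}\Bigl(-\frac{\varphi}{\alpha} - \frac{\alpha}{4\gamma}|\dot l|^2\Bigr),\nonumber
\end{equation}
and integrating from $t_1$ to $t_2$ gives \eqref{gdhk}'s Harnack consequence, namely the first displayed inequality of the theorem. The third step handles the case where $\alpha,\gamma$ are constants: then one should optimize over the path $l$. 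Choosing $l$ to traverse a minimizing geodesic between $x_1$ and $x_2$ with a time-reparametrization that makes $|\dot l|$ constant in the variable $s$ with $ds = e^{-at}\,dt$ — equivalently, taking $l$ to be the geodesic of length $d(x_1,x_2)$ parametrized proportionally to arclength in the "slowed" time — turns $\int_{t_1}^{t_2} e^{-at}\frac{\alpha}{4\gamma}|\dot l|^2\,dt$ into $\frac{\alpha}{4\gamma}\cdot\frac{d(x_1,x_2)^2}{\int_{t_1}^{t_2}e^{at}\,dt} = \frac{\alpha}{4\gamma}\cdot\frac{a\,d(x_1,x_2)^2}{e^{at_2}-e^{at_1}}$ by Cauchy–Schwarz (the standard geodesic-reparametrization computation), yielding \eqref{loghk}.

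For the final sharpness assertion, I would use the model/extremal solution that already realizes equality in \eqref{sldhk} in case $(ii)$ — presumably a solution on $\mathbb{R}^n$ (or a product with a Gaussian-type weight) of the form $u = \exp(\phi(t) + \psi(t)|x|^2)$ constructed in Section 5 — plug it into \eqref{loghk} with the specific $\alpha,\gamma,\varphi$ arising from the sharp $A_3$-system for the logarithmic equation (worked out in the appendix), and check that both sides agree: the left side evaluates along the extremal, and the geodesic term $\frac{\alpha}{4\gamma}\cdot\frac{a\,d(x_1,x_2)^2}{e^{at_2}-e^{at_1}}$ matches the $|x|^2$-contribution of $\psi$. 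The main obstacle I anticipate is not the integration argument, which is routine, but rather (a) being careful that the path integral manipulations are legitimate — $l$ need only be piecewise smooth and $f$ is smooth and positive, so this is fine — and (b) the sharpness verification, which requires knowing the precise extremal solution and the precise sharp choice of $\gamma,\alpha,\varphi$ from Theorem 2.3 and the appendix; one must confirm that the Cauchy–Schwarz step in the geodesic reparametrization is saturated by the extremal, i.e. that along the extremal $|\nabla f|^2$ is spatially constant and the optimal path is indeed a constant-speed (in $ds$) geodesic.
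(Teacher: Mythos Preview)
Your proposal is correct and follows essentially the same route as the paper: differentiate $e^{-at}f(l(t),t)$ along a path, apply the differential Harnack inequality \eqref{gdhk} together with $h(f)=af$, absorb $\langle\nabla f,\dot l\rangle$ via Young's inequality, integrate, and then optimize the path term by Cauchy--Schwarz with weight $e^{-at}$ to obtain $\frac{a\,d(x_1,x_2)^2}{e^{at_2}-e^{at_1}}$. For the sharpness assertion the paper does exactly what you anticipate: it plugs in the explicit Euclidean solution $u(x,t)=\exp\bigl[-\tfrac{a\|x-x_0\|^2}{4(1-e^{-at})}-\tfrac{n}{2}e^{at}\ln|1-e^{-at}|+Ce^{at}\bigr]$ and chooses $x_0=\frac{(e^{at_2}-1)x_1-(e^{at_1}-1)x_2}{e^{at_2}-e^{at_1}}$ to force equality in \eqref{loghk}, so your guessed ansatz $\exp(\phi(t)+\psi(t)|x-x_0|^2)$ is on the mark.
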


\begin{thm}
	Let $(\mathbf{M^{n}},g)$ be an $n$-dimensional complete Riemannian manifold with $Ric_V^m \ge 0$ and $m>n$. We write the corresponding elliptic equation of \eqref{logeq} as follows:
	\begin{equation}\label{elog}
	\Delta_V u+au\ln u=0.
	\end{equation}
	Then we have the following Liouville type results:\vspace{2mm}\\
	{\rm(1)} Suppose $\mathbf{M^{n}}$ is a closed manifold. Let $u(x,t)$ be a smooth positive solution of
	\eqref{logeq} on $\mathbf{M^{n}}\times(-\infty,0)$ with $a<0$. Then $u(x,t)=e^{ce^{at}}$ for some $c\in\mathbb{R}$. Moreover, if $u$ is a positive solution of \eqref{elog}, then $u\equiv 1$.\\
	{\rm(2)} Let $u(x,t)$ be a smooth positive solution of
	$(2.5)$ on $\mathbf{M^{n}}\times(-\infty,0)$ with $a<0$. And we define the following partition of $\mathbf{M^{n}}$: 
	\begin{eqnarray}
	S_1=\{x\in\mathbf{M^{n}}:u(x,t)>1\quad \text{for any} \quad t\in(-\infty,0)\},\nonumber\\
	S_2=\{x\in\mathbf{M^{n}}:u(x,t)\equiv 1\quad \text{for any} \quad t\in(-\infty,0)\},\nonumber\\
	S_3=\{x\in\mathbf{M^{n}}:u(x,t)<1\quad \text{for some} \quad t\in(-\infty,0)\}.\nonumber
	\end{eqnarray}
	Then we have\\
	{\rm (2a)} For $x\in S_1$, we have $u(x,\cdot)=e^{O(e^{at})}$ as $t\to-\infty$.\\
	{\rm (2b)} For $x\in S_3$, we have $u(x,\cdot)=e^{O(-e^{at})}$ as $t\to-\infty$.\\
	{\rm (2c)} If $u$ has lower bound $\delta>0$, then it has lower bound $1$ and for any $x\in\mathbf{M^{n}} $, we have $u(x,\cdot)=O(e^{ce^{at}})$ as $t\to-\infty$ for some $c\ge 0$.\\
	{\rm (2d)} If $u(\cdot,t_0)\le 1$ for some $t_0$, then $u(x,t)\le 1$ for any $t\le t_0$. Moreover, if $u\le \delta<1$, then $u(x,t)\le e^{\ln \delta\cdot e^{at}}$.
	
	The growth control on time in {\rm (2a)-(2d)} are sharp.\vspace{1mm}\\
	{\rm (3)} Let $u(x,t)$ be a smooth positive solution of
	\eqref{logeq} on $\mathbf{M^{n}}\times(-\infty,0)$ with $a>0$. Then\\
	{\rm (3a)} If $u(x_0,t_0)<e^{\frac{m}{2}}$, then $u(x_0,t)<e^{\frac{m}{2}}$ for $t\le t_0$.\\
	{\rm (3b)} For any $t_0\in(-\infty,0)$, we have 
	\begin{equation}
	\ln u(x,t)\le\frac{m}{2}+[\ln u(x,t_0)-\frac{m}{2}] \cdot e^{a(t-t_0)}\quad \text{for}\quad t\le t_0.
	\end{equation} 
	
	Moreover, $\limsup\limits_{t\to-\infty}u(x,t)\le e^{\frac{m}{2}}$ for any $x\in\mathbf{M^{n}}$. If $V=\mathbf{0}$ {\rm(i.e. $Ric\ge 0$)}, we can replace $m$ by $n$.\\
	{\rm (4)} Let $u(x,t)$ be a smooth positive solution of
	\eqref{logeq} on $\mathbf{M^{n}}\times(0,\infty)$ with $a>0$. Then we define the following partition of $\mathbf{M^{n}}$:
	\begin{eqnarray}
	Z_1=\{x\in\mathbf{M^{n}}:\limsup\limits_{t\to\infty}u(x,t)> e^{\frac{m}{2}}\},\nonumber\\
	Z_2=\{x\in\mathbf{M^{n}}:\liminf\limits_{t\to\infty}u(x,t)< e^{\frac{m}{2}}\},\nonumber\\
	Z_3=\{x\in\mathbf{M^{n}}:\lim\limits_{t\to\infty}u(x,t)= e^{\frac{m}{2}}\}.\nonumber
	\end{eqnarray}
	{\rm (4a)} For $x\in Z_1$, then there exists $c>0$ and $t_0\in(0,\infty)$ such that
	\begin{equation}
	u(x,t)\ge e^{-\frac{m}{2}e^{at}\ln (1-e^{-at})+ce^{at}}\quad \text{for}\quad t\ge t_0.
	\end{equation} 

	Moreover, if  $\ln u\ge\delta>\frac{m}{2}$, then there exists $c=c(\delta)>0$ and $t_0=t(\delta)$ such that
	\begin{equation}
	u(x,t)\ge e^{-\frac{m}{2}e^{at}\ln (1-e^{-at})+ce^{at}}\quad \text{in}\quad (t_0,\infty).\nonumber
	\end{equation}
	{\rm (4b)} For $x\in Z_2\cup Z_3$, then $u(x,t)\le e^{-\frac{m}{2}e^{at}\ln(1-e^{-at})}$. Therefore  $\limsup\limits_{t\to\infty}u(x,t)\le e^{\frac{m}{2}}$ for $x\in Z_2\cup Z_3$. Then we have $Z_1\cap Z_2=\emptyset$ which yields that $Z_1$,$Z_2$,$Z_3$ indeed form a partition of $\mathbf{M^{n}}$.
	
	If $V=\mathbf{0}$, we can replace $m$ by $n$. In this case, the decay estimates in {\rm (4a)} and {\rm (4b)} are sharp.\\
	{\rm (5)} Let $u(x,t)$ be a smooth positive solution to the equation
	$(2.5)$ on $\mathbf{M^{n}}\times(0,\infty)$. Then for any $t_0\in(0,\infty)$ and $x\in\mathbf{M^{n}}$, we have the following decay estimate on time:
	\begin{equation}
	\ln u(x,t)\ge e^{a(t-t_0)}\ln u(x,t_0)+\frac{m}{2}e^{at}\ln\Big(\frac{e^{-at_0}-1}{e^{-at}-1}\Big)\quad \text{for}\quad t\ge t_0,
	\end{equation}
	i.e.
	\begin{equation}
	F(x,t)\quad\text{is non-decreasing on time}.\nonumber
	\end{equation}
	Here $F(x,t):=e^{-at}\ln u(x,t)+\frac{m}{2}\ln|e^{-at}-1|$.
	
	Moreover, at $a<0$ case, one must have $\liminf\limits_{t\to\infty}u(x,t)\ge 1$ for any $x\in\mathbf{M^{n}}$. If $V=\mathbf{0}$, we can replace $m$ by $n$. In this case, $(2.12)$ is sharp.\\
	{\rm (6)} Let $u(x)$ be a smooth positive solution to the equation
	\eqref{elog} on $\mathbf{M^{n}}$. If $a<0$, then $u\ge 1$. If $a>0$, then $u\le e^{\frac{m}{2}}$. 
\end{thm}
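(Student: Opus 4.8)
\emph{Proof proposal.} The plan is to derive all six parts from the sharp inequality \eqref{sldhk} of Theorem 2.3 together with the Harnack inequality of Theorem 2.4. Writing $f=\ln u$ and using \eqref{logeq} one has $\Delta_V f=\partial_t f-|\nabla f|^2-af$, so \eqref{sldhk} reads $|\nabla f|^2-\partial_t f+af-\varphi\le 0$ with $\varphi(t):=\tfrac{ma}{2(1-e^{-at})}$; this is exactly \eqref{gdhk} with $\gamma=\alpha\equiv 1$, $h(f)=af$, and this $\varphi$. Along a constant path $\tfrac{d}{dt}\big(e^{-at}f(x,t)\big)=e^{-at}(\partial_t f-af)\ge e^{-at}(|\nabla f|^2-\varphi)\ge -e^{-at}\varphi$, and since $\tfrac{d}{dt}\big(\tfrac m2\ln|1-e^{-at}|\big)=e^{-at}\varphi$, the function $F(x,t):=e^{-at}\ln u(x,t)+\tfrac m2\ln|1-e^{-at}|$ is non-decreasing in $t$. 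This is the $x_1=x_2$ case of Theorem 2.4 and is already part (5); the assertion $\liminf_{t\to\infty}u\ge 1$ there (for $a<0$) will follow by letting $t\to\infty$ in $F(x,t)\ge F(x,t_0)$ using $e^{at}\to 0$.

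Next I would establish an ``elliptic limit''. For a solution on $\M\times(-\infty,0)$, time-translation (Remark 2.2) applies Theorem 2.3 on each window $(T_0,0)$, giving $\Delta_V f(x,t)\ge -\tfrac{ma}{2(1-e^{-a(t-T_0)})}$; letting $T_0\to -\infty$ yields $\Delta_V f\ge 0$ when $a<0$ and $\Delta_V f\ge -\tfrac{ma}{2}$ when $a>0$. On a closed manifold with $a<0$, $\Delta_V f(\cdot,t)\ge 0$ and the Hopf strong maximum principle force $f(\cdot,t)$ constant in $x$; the ODE $f'=af$ then gives $u=e^{ce^{at}}$, and a static solution of \eqref{elog} must then be $\equiv 1$: this proves (1), and alternatively one reads off $u\equiv 1$ directly by evaluating $\Delta_V u+au\ln u=0$ at an interior maximum ($\Delta_V u\le 0$, so $u\le 1$) and minimum ($u\ge 1$) — which is also part (6) for $a<0$. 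Part (6) for $a>0$ will follow by feeding a static solution into the decay estimate (3b) below.

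For $a>0$: from $\partial_t f-af\ge\Delta_V f\ge -\tfrac{ma}{2}$, multiplying by $e^{-at}$ and integrating from $t$ up to $t_0$ gives $\ln u(x,t)\le \tfrac m2+[\ln u(x,t_0)-\tfrac m2]e^{a(t-t_0)}$ for $t\le t_0$, which is (3b); (3a) and $\limsup_{t\to-\infty}u\le e^{m/2}$ are immediate corollaries. On $\M\times(0,\infty)$ with $a>0$ I would use that $F$ is non-decreasing, so $\lim_{t\to\infty}F(x,t)$ exists in $(-\infty,\infty]$; since $e^{at}\ln(1-e^{-at})\to -1$, the inequality $F(x,t)\le 0$ is equivalent to $u(x,t)\le e^{-\frac m2 e^{at}\ln(1-e^{-at})}$ and forces $\limsup_{t\to\infty}u(x,t)\le e^{m/2}$. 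Hence for $x\in Z_2\cup Z_3$ the defining condition forces $\lim_{t\to\infty}F(x,t)\le 0$, so $F\le 0$ and (4b) holds; for $x\in Z_1$ it forces $\lim_{t\to\infty}F(x,t)=:c>0$, so $F(x,t)\ge c$ for large $t$, which is (4a). Since (4b) shows $Z_2\cup Z_3\subseteq Z_1^c$, we get $Z_1\cap Z_2=\emptyset$, whence $\{Z_1,Z_2,Z_3\}$ is a partition. For $a<0$ on $\M\times(-\infty,0)$, $\Delta_V f\ge 0$ also makes $e^{-at}\ln u(x,t)$ non-decreasing; combining this and the monotonicity of $F$ with the sign of $\ln u(x,\cdot)$ prescribed by membership in $S_1$ or $S_3$ (and $\ln|1-e^{-at}|\to 0$ as $t\to -\infty$) yields the two-sided bounds (2a)-(2d): the $\pm Ce^{at}$ control of $\ln u$ comes from $\lim_{t\to -\infty}F(x,t)$ (the infimum) being finite, which holds on $S_1$ and, under the stated hypotheses, on $S_3$, while the sign statements in (2c)-(2d) come from propagating $\ln u\le 0$, resp. $\ge 0$, backward in $t$ along $e^{-at}\ln u$.

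For the sharpness claims I would produce, on $\mathbb{R}^n$ (so $V=\mathbf 0$ and we may use $m=n$), the explicit family $u(x,t)=\exp\!\big(\tfrac{a|x|^2}{4(e^{-at}-1)}+B(t)\big)$ with $B$ solving the linear ODE $B'=\tfrac{na}{2(e^{-at}-1)}+aB$; a direct substitution confirms it solves \eqref{logeq}, and all the inequalities above become equalities along $x=0$ for suitable constants of integration (the free constant in $B$ realizes the constant $c$ in (4a)). The main obstacles will be: making the $T_0\to -\infty$ limits rigorous and reconciling them with the precise time interval in Theorem 2.3 (handled via the time-translation remark); replacing the strong maximum principle — unavailable in the noncompact case — by a systematic use of the single monotone quantity $F$ together with careful sign bookkeeping, especially in the case splits $Z_1,Z_2,Z_3$ and $S_1,S_3$; and matching the extremal family to each sharp estimate, which requires tracking the constants of integration.
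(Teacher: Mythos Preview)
Your plan is essentially the paper's own: time-translate Theorem~2.3 to get the ``elliptic limit'' $\Delta_V f\ge 0$ (resp.\ $\ge -\tfrac{ma}{2}$) on ancient solutions, turn the sharp inequality \eqref{sldhk} into the monotonicity of $F(x,t)=e^{-at}f+\tfrac m2\ln|1-e^{-at}|$ on $(0,\infty)$, and read off each item by ODE comparison; the explicit $\mathbb{R}^n$ family for sharpness is exactly the paper's (5.27). A few local fixes are needed. First, the monotonicity of $F$ is only available on $(0,\infty)$: on $\mathbf{M^n}\times(-\infty,0)$ the time-translated estimate only survives the limit $T_0\to-\infty$ as $\Delta_Vf\ge 0$ (for $a<0$) or $\ge -\tfrac{ma}{2}$ (for $a>0$), so in part~(2) you should work with the monotone quantity $e^{-at}f$ alone (which already yields (2a)--(2d)), not with $F$. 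Second, in (4a) your sentence ``$\lim F=:c>0$, so $F\ge c$ for large $t$'' is inverted: a nondecreasing function lies \emph{below} its limit. The argument you want is: if $F(x,t)\le 0$ for all $t$ then $\ln u\le -\tfrac m2 e^{at}\ln(1-e^{-at})\to\tfrac m2$, contradicting $x\in Z_1$; hence $F(x,t_0)>0$ for some $t_0$, and then $F(x,t)\ge F(x,t_0)=:c$ for $t\ge t_0$. Third, the max/min argument you offer for (6) with $a<0$ is only valid on closed manifolds; on a complete noncompact manifold use your part~(5), feeding in a static solution to get $u\ge 1$ (the paper does the same, and uses (3b) for $a>0$ as you propose).
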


\noindent\textbf{Remark 2.3}\\
(i) If one wants to get a general bound of \eqref{elog} for $Ric_V^m\ge-K$ case, one can use the improved differential Harnack inequalities in Appendix 7.2. \\
(ii) To our knowledge, our results are new under the Bakry-Émery curvature case. Under the Ricci curvature condition, for the case $a>0$, the bound in (2.9) (with $V$ also vanishing) was first derived by \cite{CY}. For the case $a<0$, the bound $e^{-\frac{n}{16}}$ (in \cite{CLL}) was the best known before our present paper. Due to the lack of the sharp differential Harnack inequality (2.6), it was not possible to obtain the sharp bound of 1 for the case $a<0$ for a long time.
\\
(iii) Before our consideration, in \cite{CLL}, the authors derived the sharp differential Harnack inequality for the case $a>0$. However, the proof we provide below is new and simpler in this case.
When $a<0$, the situation is a bit more complex and circuitous.

\subsection{\textbf{Application to Yamabe type equation}}
In this subsection, we provide differential Harnack inequalities, Harnack inequalities, and Liouville type theorems for the Yamabe type equation under the non-negative Bakry-Émery Ricci curvature condition.

The following Yamabe type equation arises from the scalar curvature equation and can be rewritten as follows:
\begin{equation}\label{saeq}
	\partial_{t}u=\Delta_{V}u+au+bu^p,
\end{equation}
where $a$, $b$, and $p$ are real constants. There are some specific cases:
(i) $a>0$, $b=-a$, and $p=2$, which corresponds to the Fisher-KPP equation.
(ii) $a=-b=1$ and $p=3$, which corresponds to the parabolic Allen-Cahn equation.
(iii) $a>0$, $b<0$, and $p=3$, which corresponds to the Newell-Whitehead equation, extending the parabolic Allen-Cahn equation.

For a more general setting, we can consider a generalization of  equation \eqref{saeq} as follows:
\begin{equation}\label{geq}
	\partial_{t}u=\Delta_{V}u+\sum_{i=1}^{N}a_i u^{p_i},
\end{equation}
where $a_i$ and $p_i$ are real numbers, $N$ is a natural number, and $p_1<\cdots<p_N$.
 
\begin{thm}
	Let $(\mathbf{M^{n}},g)$ be an $n$-dimensional complete Riemannian manifold with $Ric_V^m \ge 0$ for $m>n$. Assume $a_i\ge 0$ and $p_i\le 1$ for $i=1,\cdots,N$. Let $u(x,t)$ be a smooth positive solution of
	{\rm(\ref{geq})} on $\mathbf{M^{n}}\times(0,\infty)$ and $f:=\ln u$. For the following cases:\\
	$(i)$ $\mathbf{M^{n}}$ is complete;\\
	$(ii)$ $\mathbf{M^{n}}$ is compact with convex boundry and $u$ solves Neumann problem (2.2),\\
	then we have differential Harnack inequality:
	\begin{equation}\label{sgdhk}
	\Delta_{V} f +\frac{m}{2t}\ge 0.
	\end{equation}	
\end{thm}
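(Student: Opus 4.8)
The plan is to derive \eqref{sgdhk} as a special case of the global differential Harnack inequality \eqref{gdhk} in Theorem \ref{GD}, by exhibiting explicit functions $\gamma,\alpha,\varphi,\beta,c$ that satisfy the $A_3$-system for the equation \eqref{geq} with $K=0$. First I would translate \eqref{geq} into the framework of Theorem \ref{GD}: setting $f=\ln u$, the nonlinearity becomes $H(u)=\sum_i a_i u^{p_i}$, so $h(t)=H(e^t)/e^t=\sum_i a_i e^{(p_i-1)t}$, whence $h'(f)=\sum_i a_i(p_i-1)e^{(p_i-1)f}$. The crucial sign observation is that $a_i\ge 0$ and $p_i\le 1$ force $h'(f)\le 0$ and (by $p_i\le 1$, so $(p_i-1)^2\ge 0$ but with the factor $(p_i-1)$ already negative... more precisely $h''(f)=\sum_i a_i(p_i-1)^2 e^{(p_i-1)f}\ge 0$). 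So $h'\le 0$, $h''\ge 0$ on the whole range of $f$. This is exactly the sign configuration that makes the Li-Yau-type ansatz go through with no contribution from the nonlinear term.

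Next I would take the classical Li-Yau choice adapted to the $V$-Laplacian and $m$-dimensional Bakry-Émery curvature with $K=0$: namely $\gamma\equiv 1$, $\alpha\equiv$ some constant $>1$ or simply track the sharp profile, $\varphi=\frac{m\alpha^2}{2t}$ (or $\varphi = m/(2t)$ after the normalization that yields \eqref{sgdhk} with $\alpha=\gamma=1$), $c=\varphi$ (the quantity $c$ plays the role of the "potential" term coming from $h'(f)$ in the general machinery, and for $K=0$, $h'\le 0$ one expects to be able to absorb it), and $\beta(t)=t$ so that $\beta(0)=0$, $\beta>0$. Then I would verify the four inequalities of Definition 2.2 one at a time. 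With $\gamma\equiv1$, $\alpha\equiv1$, $K=0$: the first inequality reads $\frac{4}{m}c+(1-1)h'(f)+h''(f)-0-0\ge \frac{1}{1}(\frac{4}{m}c-0)$, i.e. $h''(f)\ge 0$, which holds. The second reads $\varphi'-\frac{2}{m}c^2+\varphi(\frac{4}{m}c-h'(f))\ge 0$; plugging $\varphi=c=\frac{m}{2t}$ gives $-\frac{m}{2t^2}-\frac{2}{m}\cdot\frac{m^2}{4t^2}+\frac{m}{2t}(\frac{4}{m}\cdot\frac{m}{2t}-h'(f)) = -\frac{m}{2t^2}-\frac{m}{2t^2}+\frac{m}{t^2}-\frac{m}{2t}h'(f)=-\frac{m}{2t}h'(f)\ge0$ since $h'\le0$. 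The third, $h'(f)+\frac{\alpha'}{\alpha}+\frac{\beta'}{\beta}-\frac{4\gamma\varphi}{m\alpha^2}\le 0$, becomes $h'(f)+0+\frac1t-\frac{4}{m}\cdot\frac{m}{2t}=h'(f)+\frac1t-\frac2t=h'(f)-\frac1t\le 0$, which holds. Finally the fourth, $\beta(0)=0$ and $\beta>0$, holds with $\beta(t)=t$. For the structural condition (I)--(III): with $\alpha=\gamma=1$ we do not have $\alpha>\gamma$, so I would use condition (I) — but (I) needs $\min\{\alpha-\gamma,\gamma\}\ge\epsilon>0$, which fails when $\alpha=\gamma$. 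So the honest route is to take $\alpha=\gamma\equiv1$ only in the \emph{limiting} statement and instead run Theorem \ref{GD} with, say, $\gamma\equiv1$, $\alpha\equiv\alpha_0>1$ constant, $\varphi=\frac{m\alpha_0^2}{2t}$, $\beta(t)=t$, $c=\frac{m\alpha_0}{2t}$ (choosing $c$ so that $\frac{4\gamma}{m}c=\frac{2\alpha_0}{t}$ matches what the computation needs); then condition (I) holds with $\epsilon=\min\{\alpha_0-1,1\}>0$ and $\alpha,\beta$ non-decreasing. Carrying out the local estimate on $B(x_0,2R)$, letting $R\to\infty$ (the right-hand side $\to0$ because $K=0$ makes the bracket $O(1/R^2)$), and then letting $\alpha_0\to1^+$, one recovers $\frac{|\nabla u|^2}{u^2}-\frac{u_t}{u}+h(f)-\frac{m}{2t}\le0$, i.e. $\Delta_V f+\frac{m}{2t}\ge0$ after using the equation $u_t/u=\Delta_V f+|\nabla f|^2+h(f)$ to rewrite $-\frac{u_t}{u}+\frac{|\nabla u|^2}{u^2}+h(f)=-\Delta_V f$.

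The compact-with-convex-boundary case $(ii)$ is handled identically, invoking the Neumann version packaged in Theorem \ref{CGD} / the boundary-convexity Reilly-type term, which is why the hypothesis in Theorem \ref{GD} already accommodates it; no new idea is needed there. The main obstacle, and the step I would write most carefully, is the passage from the local estimate to the global one: one must check that for $K=0$ the local bound in Theorem \ref{GD} under condition (I) genuinely decays like $C/R^2\to0$ as $R\to\infty$ for the constant choices above (here $\sqrt{K}R\coth(\sqrt{K}R)\to1$ as $K\to0$, so the bracket is $O(R^{-2})$), and that the subsequent limit $\alpha_0\to1^+$ is legitimate — the inequality \eqref{sgdhk} is closed under this limit since every term is continuous in $\alpha_0$ and the right-hand side is independent of it. A secondary point worth stating explicitly is that the range of $f=\ln u$ may be all of $\mathbb R$, so $h\in C^2(\mathbb R)$ and the interval $I$ in Definition 2.2 can be taken to be $\mathbb R$; the sign inequalities $h'\le0$, $h''\ge0$ hold pointwise on all of $\mathbb R$, so there is no issue with the domain of $h$. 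With these verifications in place the theorem follows directly from Theorem \ref{GD}.
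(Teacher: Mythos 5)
Your architecture is the same as the paper's (reduce to a Li--Yau-type choice in the $A_3$-system, apply Theorem 2.2 under condition (I), let $R\to\infty$, then $\alpha\to1^+$), but there is a concrete gap at the one place where the nonlinearity actually enters. You verify the four inequalities of Definition 2.2 only for $\alpha=\gamma=1$, where the dangerous term $(\alpha-\gamma)h'(f)$ conveniently vanishes; you then rightly discard that choice because condition (I) forces $\alpha-\gamma\ge\epsilon>0$, and you switch to $\gamma\equiv1$, $\alpha\equiv\alpha_0>1$, $c=\frac{m\alpha_0}{2t}$, $\varphi=\frac{m\alpha_0^2}{2t}$, $\beta(t)=t$ --- but you never re-check the first inequality for this admissible choice. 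With $K=0$ it reads
\begin{equation*}
\frac{2(\alpha_0-1)}{t}+(\alpha_0-1)h'(f)+\alpha_0h''(f)\ge0,\qquad (\alpha_0-1)h'(f)+\alpha_0h''(f)=\sum_{i=1}^{N}a_i(p_i-1)(\alpha_0p_i-1)e^{(p_i-1)f}.
\end{equation*}
If some $p_i\in(0,1)$ has $a_i>0$ and $\alpha_0>1/p_i$, the corresponding term is negative and unbounded below as $f\to-\infty$ (no positive lower bound on $u$ is assumed), so an arbitrary constant $\alpha_0>1$ does \emph{not} satisfy the $A_3$-system: for instance $N=1$, $a_1=1$, $p_1=3/4$, $\alpha_0=2$ gives $(\alpha_0-1)h'+\alpha_0h''=-\tfrac18 e^{-f/4}$. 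Checking that condition (I) holds with $\epsilon=\min\{\alpha_0-1,1\}$, which is all you do for the $\alpha_0$-run, does not substitute for this.

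The repair is exactly the restriction the paper builds in: one needs $\alpha_0p_i\le\gamma=1$ for every $i$ with $a_i>0$ and $p_i\in(0,1)$, i.e. $\alpha_0\in(1,1/p_N]$ when $p_N\in(0,1)$, and $\alpha_0\in(1,1/p_{N-1}]$ when $p_N=1$ (that term contributes nothing to $h'$, $h''$), with no restriction when all relevant $p_i\le0$; this makes $(\alpha_0-\gamma)h'+\alpha_0h''\ge0$ pointwise and reduces the system to the heat-equation one. After that, your verifications of the second and third inequalities (which do hold for every $\alpha_0>1$, since they only use $h'\le0$), the decay of the local bound as $R\to\infty$ when $K=0$, the limit $\alpha_0\to1^+$, and the rewriting via $f_t=\Delta_Vf+|\nabla f|^2+h(f)$ are all correct and yield $\Delta_Vf+\frac{m}{2t}\ge0$. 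Since you send $\alpha_0\to1^+$ anyway, this is a one-line fix, but as written the appeal to Theorem 2.2 is unjustified at precisely the step where the hypotheses $a_i\ge0$, $p_i\le1$ and the interplay between $h'$ and $h''$ are needed. (For case (ii) the paper, like you, passes through the compact Theorem 2.1, where the $A_2$-system permits $\alpha=\gamma=1$ and your computation at $\alpha=\gamma=1$ does suffice.)
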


\noindent\textbf{Remark 2.4}\\
Under the non-negative Ricci curvature condition, if $a_i=0$, then \eqref{sgdhk} represents Li-Yau's differential Harnack inequality, which is sharp in this case. For any $a$, $b$, $p \in \mathbb{R}$ and $Ric_V^m\ge-K$, one can obtain other differential Harnack inequalities for the equation \eqref{saeq} as presented in Appendix 7.3.

\begin{thm}
	Let $(\mathbf{M^{n}},g)$ be a complete Riemannian manifold. Let $u$ be a positive solution of {\rm(\ref{geq})} on $\mathbf{M^{n}}$ and the global differential Harnark inequality $(2.2)$ holds (all symbols are same as in Theorem 2.2). For simplicity, we suppose $a_i\ge 0$ and $p_N\le 1$. If we set $f:=\ln u$ and suppose that $x_1$, $x_2\in \mathbf{M^{n}}$, $0<t_1<t_2$, then we have \vspace{2mm}\\
	\begin{equation}
	f(x_2,t_2)-f(x_1,t_1)\ge\int_{t_1}^{t_2}\frac{-\varphi}{\alpha}-\frac{\alpha}{4\gamma}\left|\frac{dl}{dt}\right|^2dt.	
	\end{equation}
	Here $l$ is any smooth path connecting $x_1$ and $x_2$.
	
	Moreover, if $\alpha$,$\gamma$ are two positive constants (such as in Li-Yau type inequality) and $l$ is a minimizing geodesic with constant speed, we have 
	\begin{equation}
	f(x_2,t_2)-f(x_1,t_1)\ge\int_{t_1}^{t_2}- \frac{\varphi}{\alpha} dt-\frac{\alpha}{4\gamma}\cdot\frac{ d(x_1,x_2)^2}{t_2-t_1}.	
	\end{equation}	
\end{thm}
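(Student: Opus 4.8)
The plan is to integrate the differential Harnack inequality \eqref{gdhk} along a space-time path, exactly as in the classical Li--Yau argument. Fix $x_1,x_2\in\mathbf{M^n}$ and $0<t_1<t_2$, and let $l:[t_1,t_2]\to\mathbf{M^n}$ be a smooth path with $l(t_1)=x_1$, $l(t_2)=x_2$. First I would compute the time derivative of $f$ along the path: $\frac{d}{dt}f(l(t),t)=\langle\nabla f,\dot l\rangle+f_t$. Using $f=\ln u$ and $u_t/u=f_t+|\nabla f|^2$ (equivalently $\Delta_V f + |\nabla f|^2 + h(f) = f_t + |\nabla f|^2$, but here I only need the relation between $f_t$, $u_t/u$ and $|\nabla u|^2/u^2$), the global inequality \eqref{gdhk} can be rewritten as a lower bound for $\alpha f_t$, namely $\alpha f_t \ge \gamma\frac{|\nabla u|^2}{u^2} + \alpha h(f) - \varphi - (\alpha-\gamma)\frac{|\nabla u|^2}{u^2}$; the key point for the Yamabe-type setting is that $a_i\ge0$ and $p_i\le1$ force $h(f)=\sum_i a_i e^{(p_i-1)f}\ge 0$, so the term $\alpha h(f)$ may simply be dropped, leaving $\alpha f_t \ge \gamma|\nabla f|^2 - \varphi$.

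Next I would estimate $\frac{d}{dt}f(l(t),t)$ from below. Dividing the rewritten Harnack inequality by $\alpha>0$ gives $f_t\ge\frac{\gamma}{\alpha}|\nabla f|^2-\frac{\varphi}{\alpha}$, hence
\begin{equation}
\frac{d}{dt}f(l(t),t)=\langle\nabla f,\dot l\rangle+f_t\ge -|\nabla f|\,|\dot l| + \frac{\gamma}{\alpha}|\nabla f|^2 - \frac{\varphi}{\alpha}.\nonumber
\end{equation}
Now I would apply the elementary inequality $\frac{\gamma}{\alpha}X^2 - |\dot l|X \ge -\frac{\alpha}{4\gamma}|\dot l|^2$ (completing the square, valid since $\gamma/\alpha>0$) with $X=|\nabla f|$, which yields $\frac{d}{dt}f(l(t),t)\ge -\frac{\alpha}{4\gamma}|\dot l|^2-\frac{\varphi}{\alpha}$. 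Integrating over $[t_1,t_2]$ gives precisely
\begin{equation}
f(x_2,t_2)-f(x_1,t_1)\ge\int_{t_1}^{t_2}\Big(\frac{-\varphi}{\alpha}-\frac{\alpha}{4\gamma}\big|\tfrac{dl}{dt}\big|^2\Big)\,dt,\nonumber
\end{equation}
which is the first claimed inequality.

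For the second inequality I would specialize to $\alpha,\gamma$ constant and choose $l$ to be a minimizing geodesic from $x_1$ to $x_2$ parametrized with constant speed on $[t_1,t_2]$, so that $|\dot l|\equiv d(x_1,x_2)/(t_2-t_1)$ and $\int_{t_1}^{t_2}|\dot l|^2\,dt=d(x_1,x_2)^2/(t_2-t_1)$; pulling the constant $\frac{\alpha}{4\gamma}$ out of the integral gives the stated bound. I do not expect any serious obstacle here: the only subtlety is justifying that the global Harnack inequality, which a priori holds on $(0,\infty)$, may be applied at the endpoints and along the path for $t\in[t_1,t_2]\subset(0,\infty)$ — this is immediate since the path stays in the region where \eqref{gdhk} is valid — and checking the sign condition $h(f)\ge0$, which is exactly where the hypotheses $a_i\ge0$, $p_N\le1$ (hence all $p_i\le1$) are used. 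If one wanted the full strength without dropping $\alpha h(f)$, one would instead carry the term $-\int_{t_1}^{t_2} h(f(l(t),t))\,dt$; but under the stated sign hypotheses this only improves the estimate, so dropping it is legitimate.
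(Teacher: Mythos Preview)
Your proof is essentially identical to the paper's: integrate the differential Harnack inequality along a space-time path, drop the nonnegative term $\alpha h(f)$ (using $a_i\ge 0$, $p_i\le 1$), complete the square, and integrate. One small slip: you wrote $u_t/u=f_t+|\nabla f|^2$, but in fact $u_t/u=f_t$ directly since $f=\ln u$; the correct starting point is simply $\gamma|\nabla f|^2-\alpha f_t+\alpha h(f)-\varphi\le 0$, from which $\alpha f_t\ge \gamma|\nabla f|^2+\alpha h(f)-\varphi\ge \gamma|\nabla f|^2-\varphi$ follows immediately --- your intermediate expression with the extra $-(\alpha-\gamma)|\nabla u|^2/u^2$ is spurious, though you land on the right inequality regardless.
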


\noindent\textbf{Remark 2.5}\\
If we impose a bound condition (or local upper bound) on the positive solution $u$ of equation \eqref{saeq}, it is possible to derive Harnack inequalities (or their local versions) for \eqref{saeq} in other cases by utilizing the differential Harnack inequalities provided in Appendix 7.3. This can be done by employing a similar argument to the proof of Theorem 2.7.

\begin{thm}
	Let $(\mathbf{M^{n}},g)$ be an $n$-dimensional complete Riemannian manifold with $Ric_V^m \ge 0$ for $m>n$. We write the corresponding elliptic equation of {\rm(\ref{geq})} as follows:
	\begin{equation}\label{ey}
	\Delta_V u+\sum_{i=1}^{N}a_i u^{p_i}=0.
	\end{equation}
	Then we have the following Liouville type results:\vspace{1mm}\\
	$(a)$ For $a_i=0$ case, let $u(x,t)$ be a smooth positive solution of the equation {\rm(\ref{geq})}  on $\mathbf{M^{n}}\times(-\infty,0)$, then $u(x,\cdot)$ is non-decreasing for any $x\in\mathbf{M^{n}}$.	Moreover, for some fixed $t_0\in(-\infty,0)$ and some $x_0\in\mathbf{M^{n}}$, if we have the following additional growth condition:
	\begin{equation}
	u(x,t_0)= e^{o(d(x,x_0))}\quad \text{as}\quad d(x,x_0)\to\infty,
	\end{equation}
	then $u$ is constant.\vspace{1mm}\\
	$(b)$ For $a_i=0$ case, let $u(x,t)$ be a smooth positive solution of the equation {\rm(\ref{geq})}  on $\mathbf{M^{n}}\times(0,\infty)$, then $t^{\frac{m}{2}}u(x,\cdot)$ is non-decreasing for any $x\in\mathbf{M^{n}}$.\vspace{1mm}\\
	$(c)$ Suppose $a_i> 0$, $p_1<1$ and $p_N\le 1$. Then there does not exist a positive solution to equation \eqref{geq} on $\mathbf{M^{n}}\times(-\infty,0)$. 
	Moreover, any non-negative solution of {\rm(\ref{geq})} must vanish before some time $T\in(-\infty,0)$.\vspace{1mm}\\
	$(d)$ Suppose $a_i> 0$, $p_1< 1$ and $p_N\le 1$. Then there does not exist a positive solution to equation \eqref{geq} on $\mathbf{M^{n}}\times(0,\infty)$ such that $u(x,t)=o(t^{\frac{1}{1-p_1}})$ for some $x\in\mathbf{M^{n}}$. This growth condition on time is sharp because of space-independent solution $u_0(t)=(a_1(1-p_1)t)^{\frac{1}{1-p_1}}$ (when $N=1$).
\end{thm}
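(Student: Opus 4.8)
The common engine is the sharp differential Harnack inequality of Theorem 2.6 (and its integrated form Theorem 2.7), applied after a time-translation that sends the weight $\tfrac{m}{2t}$ to zero. Writing $f=\ln u$ and using $\partial_t f=\dfrac{\Delta_V u}{u}+\sum_i a_i u^{p_i-1}=\Delta_V f+|\nabla f|^2+\sum_i a_i u^{p_i-1}$, the bound $\Delta_V f+\tfrac m{2t}\ge0$ of Theorem 2.6 becomes the pointwise inequality
\[
\partial_t f\ \ge\ -\frac{m}{2t}+|\nabla f|^2+\sum_{i=1}^{N}a_i u^{p_i-1}\ \ge\ -\frac{m}{2t}+a_1u^{p_1-1}\qquad\text{on }\mathbf{M^{n}}\times(0,\infty).
\]
If the solution lives on $\mathbf{M^{n}}\times(-\infty,0)$, apply this to $u(\cdot,\cdot+s_0)$ on $(0,-s_0)$ and let $s_0\to-\infty$ to get $\partial_t f\ge|\nabla f|^2+\sum_i a_iu^{p_i-1}$ on $\mathbf{M^{n}}\times(-\infty,0)$. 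With the appropriate choice of $a_i$ these two inequalities drive all four parts.

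\emph{Parts (a) and (b).} When $a_i=0$ the second inequality reads $\partial_t f\ge|\nabla f|^2\ge0$, so $u(x,\cdot)$ is non-decreasing, which is the first assertion of (a); on $(0,\infty)$ the first inequality reads $\partial_t f\ge-\tfrac m{2t}$, so $\partial_t\bigl(\ln u+\tfrac m2\ln t\bigr)\ge0$, i.e.\ $t^{m/2}u(x,\cdot)$ is non-decreasing, which is (b). For the constancy in (a), integrating $\partial_t f\ge|\nabla f|^2$ along a path from $x_1$ to $x_2$ on $[t_1,t_2]$ and completing the square (equivalently, passing to the limit $s_0\to-\infty$ in Theorem 2.7 with the Li--Yau data $\alpha=\gamma=1$, $\varphi=\tfrac m{2t}$) gives the sharp Harnack inequality $u(x_1,t_1)\le u(x_2,t_2)\exp\!\bigl(\tfrac{d(x_1,x_2)^2}{4(t_2-t_1)}\bigr)$ for $t_1<t_2<0$; combined with monotonicity in $t$, the hypothesis $u(\cdot,t_0)=e^{o(d(\cdot,x_0))}$ propagates to $\sup_{B(x_0,2R)\times(-\infty,t_0]}u=e^{o(R)}$. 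I would then finish by a parabolic Liouville argument: a Hamilton--Souplet--Zhang type local estimate $|\nabla\ln u|\le C(\tfrac1R+\tfrac1{\sqrt\tau})\bigl(1+\ln\tfrac{M}{u}\bigr)$ on $B(x_0,R)\times[t-\tau,t]$ with $M=e^{o(R)}$ (obtainable by the paper's method, cf.\ the Appendix), applied with $\tau=R^2$ and $R\to\infty$, forces $\nabla\ln u(\cdot,t)\equiv0$ for every $t\le t_0$; hence $u$ is space-independent there, so $u_t=\Delta_V u=0$, $u$ is a constant on $(-\infty,t_0]$, and then $u$ is constant on $(-\infty,0)$ by forward uniqueness.

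\emph{Parts (c) and (d).} For (c), since $1-p_1>0$ the limiting inequality $\partial_t f\ge a_1u^{p_1-1}$ on $(-\infty,0)$ gives $u_t\ge a_1u^{p_1}$, hence $(u^{1-p_1})_t\ge a_1(1-p_1)>0$; integrating from $t_1$ to $t_2$ yields $u^{1-p_1}(x,t_2)\ge a_1(1-p_1)(t_2-t_1)$, and letting $t_1\to-\infty$ contradicts the finiteness of $u(x,t_2)$, so no positive solution exists on $(-\infty,0)$. A non-negative solution is a supersolution of $\partial_t v=\Delta_V v$ because $\sum a_iu^{p_i}\ge0$, so by the strong minimum principle an interior zero $(x_*,t_*)$ forces $u\equiv0$ on $\mathbf{M^{n}}\times(-\infty,t_*]$; since $u$ cannot be everywhere positive, such a zero exists and $T:=t_*$ works. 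For (d), keep the weight and set $\tilde u=t^{m/2}u$; multiplying $\partial_t f+\tfrac m{2t}\ge a_1u^{p_1-1}$ by $u$ gives $\tilde u_t\ge a_1 t^{\frac m2(1-p_1)}\tilde u^{p_1}$, hence $(\tilde u^{1-p_1})_t\ge a_1(1-p_1)t^{\frac m2(1-p_1)}$; integrating from $t_0$ gives $\tilde u^{1-p_1}(x,t)\gtrsim t^{\frac m2(1-p_1)+1}$, i.e.\ $u(x,t)\gtrsim t^{1/(1-p_1)}$ as $t\to\infty$, contradicting $u(x,t)=o(t^{1/(1-p_1)})$. Sharpness follows by verifying that the space-independent $u_0(t)=(a_1(1-p_1)t)^{1/(1-p_1)}$ solves \eqref{geq} with $N=1$ and is exactly of order $t^{1/(1-p_1)}$, so the little-$o$ cannot be weakened.

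The main obstacle is the constancy step in (a): turning the sharp but merely parabolic Harnack inequality plus a sub-exponential bound into a genuine Liouville conclusion (the Hamilton--Souplet--Zhang ingredient). Parts (b), (c), (d) and the monotonicity in (a) are then routine consequences of the differential Harnack inequality once the time-translation trick is in place.
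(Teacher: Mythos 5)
Your proposal is correct and runs on the same engine as the paper: the sharp differential Harnack inequality of Theorem 2.6, the time translation $u(\cdot,\cdot+s_0)$ with $s_0\to-\infty$ to remove the $\tfrac{m}{2t}$ term, monotonicity for (a)--(b), and ODE comparison for (c)--(d), with a Souplet--Zhang ingredient closing part (a) in both treatments. Two execution differences are worth recording. In (d) the paper keeps the weight and argues by an $\epsilon$-comparison: from $\partial_t u\ge\big(-\tfrac{m\,u^{1-p_1}}{2t}+a_1\big)u^{p_1}$ and the hypothesis $u^{1-p_1}/t\to 0$ it compares with $\dot v=\epsilon v^{p_1}$ and lets $\epsilon\to a_1^-$, obtaining the sharper conclusion $\liminf_{t\to\infty}u/u_0\ge 1$; your substitution $\tilde u=t^{m/2}u$ with direct integration is cleaner and certainly yields the stated non-existence, but only with a non-sharp constant, so it does not recover the paper's limiting comparison with $u_0$. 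In (a) the paper simply quotes the Souplet--Zhang Liouville theorem for ancient solutions satisfying $u=e^{o(d(x)+\sqrt{|t|})}$ (footnoting that its proof must be rechecked under $Ric_V^m\ge 0$ via Bakry--Qian comparison), whereas you re-derive it from the local estimate $|\nabla\ln u|\le C(R^{-1}+\tau^{-1/2})\big(1+\ln\tfrac{M}{u}\big)$ together with forward uniqueness; that route works, but your aside that this estimate is ``obtainable by the paper's method, cf.\ the Appendix'' is not accurate --- the $A_3$-system machinery produces Li--Yau-type bounds involving $u_t/u$, not Souplet--Zhang-type bounds in $\ln(M/u)$ --- so this step should instead cite \cite{SZ} (with the same $Ric_V^m\ge 0$ caveat) exactly as the paper does. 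The remaining parts (b), (c) and the monotonicity in (a) coincide with the paper's argument.
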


\noindent\textbf{Remark 2.6}\\
(i) Theorem 2.8 (a) is attributed to Lin-Zhang \cite{LZ} under the condition of Ricci curvature. When $K>0$, \cite{MS} also obtained that $u$ is time-independent with a similar growth condition. \\
(ii) When $N=1$, $p_1\in(0,1)$ and $V=0$, Zhu \cite{Z} proved a Liouville theorem (\cite[Theorem 2]{Z}) for ancient solutions of equation \eqref{geq} with an additional growth condition on the solution, using a Souplet-Zhang type estimate (see \cite{SZ}). Here, we are able to remove this condition by our parabolic estimate.\\
(iii) When $N=1$, $p_1<0$, and $V=0$, Yang \cite{Y2} obtained a Liouville theorem for the elliptic equation \eqref{ey} (with $N=1$). Here, we can directly generalize their result using (c) in Theorem 2.8. We can also use (d) to obtain the same result.

\section{\textbf{Preliminary }}
In this section,  we first provide some useful lemmas and basic computation for proving differential Harnack inequalities of the positive solutions to the equation \eqref{oeq}. Then we give a direct proof of Theorem 2.1 by maximal principle.

\subsection{\textbf{Basic Lemmas}}

Before the proof of the main theorems, we need some lemmas.
At first, we consider a more general equation
\begin{equation}\label{goeq}
\partial_t u(x,t)=\Delta_V u(x,t)+H(x,t,u)
\end{equation}
on a complete Riemannian manifold $(\mathbf{M^{n}}, g)$ with $Ric^m_V\ge -Kg$.
Let $u(x,t)$ be  a positive solution of \eqref{goeq} on $\Omega\times(0,\infty)$, where $\Omega$ is an open set of $\mathbf{M^{n}}$.
We set 
 \begin{eqnarray}
 f=\ln u,
 \end{eqnarray}
 \begin{eqnarray}
 G(x,t,f)=H(x,t,e^{f})\cdot e^{-f}.
 \end{eqnarray}
Observe that
 \begin{eqnarray}\label{feq}
f_t&=&\frac{u_t}{u}\nonumber\\
&=&\frac{1}{u}\Big(\Delta_V u+H\Big)\nonumber\\
&=&e^{-f}\Big(\Delta_V (e^f)+H(x,t,e^f)\Big)\nonumber\\
&=&e^{-f}\Big(e^f\Delta_V f+e^f |\nabla f|^2+H(x,t,e^f)\Big)\nonumber\\
&=&\Delta_V f+|\nabla f|^2+G(x,t,f).
\end{eqnarray}
Define the $V$-heat operator
\begin{equation}
\mathscr{L} =\Delta_V-\partial_{t}.\nonumber
\end{equation}
It is easy to see
\begin{equation}\label{lr}
\mathscr{L}(k\cdot l)=\mathscr{L}k\cdot l+\mathscr{L}l\cdot k+2\left\langle\nabla k,\nabla l\right\rangle
\end{equation}
for any $k,l\in C^{2,1}(\mathbf{M^n}\times(0,\infty))$.
\begin{lem}\label{l31}
Let $F(x,t):=\gamma(t)|\nabla f|^{2}-\alpha(t) f_t+\alpha(t) G-\varphi(t)$ on $\Omega\times(0,\infty)$. Then
\begin{eqnarray}\label{36}
\mathscr{L}F&=&2\gamma|\nabla^2 f|^2+2\gamma Ric_V(\nabla f,\nabla f)-\gamma'|\nabla f|^2-2\left\langle \nabla f,\nabla F\right\rangle\nonumber\\
&&+\alpha\Big(\frac{\partial G}{\partial t}+\frac{\partial G}{\partial f}\cdot f_t\Big)+2(\alpha-\gamma)\left\langle \nabla f,\nabla_x G+\frac{\partial G}{\partial f}\nabla f\right\rangle\nonumber\\
&&+\alpha' f_t-\alpha' G+\alpha \mathscr{L}(G)+\varphi',
\end{eqnarray}
where $\gamma,\alpha,\varphi\in C^1(0,\infty)$ are undetermined functions.
\end{lem}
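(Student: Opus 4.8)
The plan is to expand $\mathscr{L}F$ by linearity over the four summands $\gamma|\nabla f|^{2}$, $-\alpha f_t$, $\alpha G$ and $-\varphi$ of $F$, compute $\mathscr{L}$ of each, and then reorganize the resulting first-order terms so that a single $-2\langle\nabla f,\nabla F\rangle$ appears; this is precisely the term that vanishes at an interior maximum of $F$, which is why one wants $\mathscr{L}F$ written in this form.

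First I would set up the two computational tools. Since $\gamma,\alpha,\varphi$ depend only on $t$, the product rule \eqref{lr} specializes to $\mathscr{L}(\gamma k)=\gamma\mathscr{L}k-\gamma' k$; in particular the summand $-\varphi$ contributes $\varphi'$, and the remaining ones reduce to computing $\mathscr{L}(|\nabla f|^{2})$, $\mathscr{L}(f_t)$ and $\mathscr{L}(G)$. For $\mathscr{L}(|\nabla f|^{2})$ I would use the weighted Bochner formula $\tfrac12\Delta_V|\nabla f|^{2}=|\nabla^{2}f|^{2}+\langle\nabla f,\nabla\Delta_V f\rangle+Ric_V(\nabla f,\nabla f)$, valid since the drift $V$ is time-independent so that $\Delta_V$ commutes with $\partial_t$, together with $\partial_t|\nabla f|^{2}=2\langle\nabla f,\nabla f_t\rangle$; this gives $\mathscr{L}(|\nabla f|^{2})=2|\nabla^{2}f|^{2}+2Ric_V(\nabla f,\nabla f)+2\langle\nabla f,\nabla(\mathscr{L}f)\rangle$.

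The key input is the identity \eqref{feq}, which reads $\mathscr{L}f=\Delta_V f-f_t=-|\nabla f|^{2}-G$. Differentiating it in $t$ yields $\mathscr{L}(f_t)=\partial_t(\mathscr{L}f)=-2\langle\nabla f,\nabla f_t\rangle-\tfrac{\partial G}{\partial t}-\tfrac{\partial G}{\partial f}f_t$, and taking its spatial gradient, with the chain rule $\nabla G=\nabla_x G+\tfrac{\partial G}{\partial f}\nabla f$, yields $\nabla(\mathscr{L}f)=-\nabla|\nabla f|^{2}-\nabla_x G-\tfrac{\partial G}{\partial f}\nabla f$. I would then expand $\nabla F=\gamma\nabla|\nabla f|^{2}-\alpha\nabla f_t+\alpha\nabla G$, solve for $\gamma\nabla|\nabla f|^{2}$, and substitute into the term $2\gamma\langle\nabla f,\nabla(\mathscr{L}f)\rangle$ found above. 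This produces exactly $-2\langle\nabla f,\nabla F\rangle$, a term $-2\alpha\langle\nabla f,\nabla f_t\rangle$ that cancels the $+2\alpha\langle\nabla f,\nabla f_t\rangle$ coming from $-\mathscr{L}(\alpha f_t)=-\alpha\mathscr{L}(f_t)+\alpha'f_t$, and the cross term $2(\alpha-\gamma)\langle\nabla f,\nabla_x G+\tfrac{\partial G}{\partial f}\nabla f\rangle$.

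Collecting the contributions (from $\gamma|\nabla f|^{2}$: $2\gamma|\nabla^{2}f|^{2}+2\gamma Ric_V(\nabla f,\nabla f)-\gamma'|\nabla f|^{2}$; from $-\alpha f_t$: $\alpha'f_t+\alpha(\tfrac{\partial G}{\partial t}+\tfrac{\partial G}{\partial f}f_t)$; from $\alpha G$: $\alpha\mathscr{L}(G)-\alpha'G$; from $-\varphi$: $\varphi'$; plus the reorganized first-order terms) and using the cancellation of $\pm2\alpha\langle\nabla f,\nabla f_t\rangle$, one arrives at \eqref{36}. I do not expect a genuine obstacle here, since every step is an identity; the only thing to watch is the bookkeeping --- consistently splitting the derivatives of $G=G(x,t,f)$ as $\partial_t G=\tfrac{\partial G}{\partial t}+\tfrac{\partial G}{\partial f}f_t$ and $\nabla G=\nabla_x G+\tfrac{\partial G}{\partial f}\nabla f$, and grouping the first-order terms so that the $\nabla F$ combination becomes explicit rather than staying hidden inside $\gamma\nabla|\nabla f|^{2}$.
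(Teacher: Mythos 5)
Your proposal is correct and follows essentially the same route as the paper: weighted Bochner formula plus the product rule \eqref{lr}, the substitution $\mathscr{L}f=-|\nabla f|^2-G$ with $\mathscr{L}(f_t)=(\mathscr{L}f)_t$, and then the same reorganization of first-order terms — your "solve $\nabla F=\gamma\nabla|\nabla f|^2-\alpha\nabla f_t+\alpha\nabla G$ for $\gamma\nabla|\nabla f|^2$" is exactly the paper's identity \eqref{311}, only with the $\pm2\alpha\langle\nabla f,\nabla f_t\rangle$ cancellation performed in a slightly different order. The only nitpick is that time-independence of $V$ (and $g$) is needed for the commutation $\mathscr{L}(f_t)=\partial_t(\mathscr{L}f)$, not for the Bochner identity itself, but this does not affect the argument.
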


\begin{proof}[\textbf{Proof}]
By using the Bochner-Weitzenb\"{o}ck formula and \eqref{lr}, we have
\begin{eqnarray}\label{37}
\mathscr{L}F&=&\gamma\mathscr{L}(|\nabla f|^2)-\gamma'|\nabla f|^2-\mathscr{L}(\alpha)f_t-\alpha\mathscr{L}(f_t)+\mathscr{L}(\alpha)G+\alpha\mathscr{L}(G)-\mathscr{L}\varphi\nonumber\\
&=& 2\gamma\Big[|\nabla^2 f|^{2}+Ric_{V}(\nabla f, \nabla f)+\left\langle \nabla f,\nabla\Delta_V f\right\rangle-\left\langle \nabla f,\nabla f_t\right\rangle\Big]-\gamma'|\nabla f|^2 \nonumber\\
&&+\alpha' f_t-\alpha\mathscr{L}(f_t)-\alpha'G+\alpha\mathscr{L}(G)+\varphi'\nonumber\\
&=&2\gamma\Big[|\nabla^2 f|^{2}+Ric_V(\nabla f, \nabla f)+\left\langle \nabla f,\nabla \mathscr{L} f\right\rangle\Big]-\gamma'|\nabla f|^2\nonumber\\
&&+\alpha' f_t-\alpha\mathscr{L}(f_t)-\alpha'G+\alpha\mathscr{L}(G)+\varphi'.
\end{eqnarray}
By \eqref{feq}, we obtain
\begin{eqnarray}
\mathscr{L}f&=&-|\nabla f|^2-G,\\
\mathscr{L}(f_t)&=&(\mathscr{L}f)_t\nonumber\\
&=&-2\left\langle \nabla f,\nabla f_t\right\rangle-\partial_tG-\partial_f G\cdot f_t.
\end{eqnarray}
Substituting (3.8) and (3.9) into \eqref{37} derives
\begin{eqnarray}\label{310}
\mathscr{L}F&=&2\gamma\Big[\nabla^2 f|^{2}+Ric_V(\nabla f, \nabla f)+\left\langle \nabla f,-\nabla(|\nabla f|^2+G)\right\rangle\Big]-\gamma'|\nabla f|^2\nonumber\\
&&+\alpha' f_t+\alpha(2\left\langle \nabla f,\nabla f_t\right\rangle+\partial_tG+\partial_f G\cdot f_t)-\alpha'G+\alpha\mathscr{L}(G)+\varphi'.
\end{eqnarray}
Observe that
\begin{equation}
	\left\langle\nabla f,\nabla F\right\rangle=\gamma\left\langle\nabla f,\nabla |\nabla f|^2\right\rangle-\alpha \left\langle\nabla f,\nabla f_t\right\rangle+\alpha \left\langle\nabla f,\nabla G\right\rangle,\nonumber
\end{equation}
i.e.
\begin{equation}\label{311}
-2\gamma\left\langle\nabla f,\nabla |\nabla f|^2\right\rangle+2\alpha \left\langle\nabla f,\nabla f_t\right\rangle=-2\left\langle\nabla f,\nabla F\right\rangle+2\alpha \left\langle\nabla f,\nabla G\right\rangle.
\end{equation}
Plugging \eqref{311} into \eqref{310}, we obtain
\begin{eqnarray}\label{312}
\mathscr{L}F&=&2\gamma|\nabla^2 f|^{2}+2\gamma Ric_V(\nabla f, \nabla f)-2\left\langle \nabla f,\nabla F\right\rangle-\gamma'|\nabla f|^2\nonumber\\
&&+2(\alpha-\gamma)\left\langle \nabla f,\nabla G\right\rangle+\alpha' f_t+\alpha(\partial_tG+\partial_f G\cdot f_t)\nonumber\\
&&-\alpha'G+\alpha\mathscr{L}(G)+\varphi'.
\end{eqnarray}
By differential chain rule $\nabla G=\nabla_x G+\partial_f G\cdot \nabla f$ and \eqref{312}, we get desirable \eqref{36}.
\end{proof}

For our special case: $H(x,t,u)=H(u)$, denote $h(f):=G=H(e^f)\cdot e^{-f}$. 
Then
\begin{eqnarray}
	\nabla_x G&=&0.\\
	\partial_t G&=&0.\\
	\partial_f G&=&h'(f).\\
	\mathscr{L}G&=&\mathscr{L}h\nonumber\\
	&=&h'(f)\Delta_{V}f+h''(f)|\nabla f|^2-h'(f)f_t.
\end{eqnarray}
Substituting (3.13)-(3.16) into \eqref{36}, we have the following lemma.
\begin{lem}\label{l32}
	Let $u$ be a positive solution of equation \eqref{oeq} on $\Omega\times(0,\infty)$ and $\alpha$, $\varphi$, $\gamma$ be defined as Lemma \ref{l31}. We set $f:=lnu$,
	$h(f):=H(e^f)\cdot e^{-f}$ and $F:=\gamma |\nabla f|^{2}-\alpha f_t+\alpha h-\varphi$, then
	\begin{eqnarray}
	\mathscr{L}F&=&2\gamma|\nabla^2 f|^2+2\gamma Ric_V(\nabla f,\nabla f)-2\left\langle \nabla f,\nabla F\right\rangle+[2(\alpha-\gamma)h'(f)-\gamma']\cdot|\nabla f|^2\nonumber\\
	&&+\alpha' f_t+\alpha(h'\Delta_V f+h''|\nabla f|^2)-\alpha' h+\varphi'.
	\end{eqnarray}		
\end{lem}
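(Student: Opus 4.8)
The plan is to deduce Lemma~\ref{l32} as a direct specialization of Lemma~\ref{l31} to the autonomous nonlinearity $H(x,t,u)=H(u)$. In that lemma the auxiliary function is $G(x,t,f)=H(x,t,e^f)e^{-f}$, which in the present case depends on $f$ only, namely $G=h(f)$ with $h(f)=H(e^f)e^{-f}$. So the first step is to record the relevant derivatives in this case: since $h$ carries no explicit $x$- or $t$-dependence, $\nabla_x G=0$ and $\partial_t G=0$, while $\partial_f G=h'(f)$ by the chain rule. These are formulas (3.13)--(3.15).

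The one computation that requires a little care is $\mathscr{L}G=\mathscr{L}(h\circ f)$. Writing $\mathscr{L}=\Delta_V-\partial_t$ and using $\nabla(h\circ f)=h'(f)\nabla f$, one gets $\Delta(h\circ f)=h'(f)\Delta f+h''(f)|\nabla f|^2$ and $\langle V,\nabla(h\circ f)\rangle=h'(f)\langle V,\nabla f\rangle$, hence $\Delta_V(h\circ f)=h'(f)\Delta_V f+h''(f)|\nabla f|^2$; together with $\partial_t(h\circ f)=h'(f)f_t$ this yields
\[
	\mathscr{L}(h\circ f)=h'(f)\Delta_V f+h''(f)|\nabla f|^2-h'(f)f_t,
\]
which is (3.16). (Equivalently $\mathscr{L}(h\circ f)=h'(f)\mathscr{L}f+h''(f)|\nabla f|^2$, and one could further replace $\mathscr{L}f$ by $-|\nabla f|^2-h(f)$ from \eqref{feq}, but the form above is the one adapted to the differential Harnack quantity $F$.)

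Finally I would substitute (3.13)--(3.16) into the identity \eqref{36}. The gradient cross-term becomes $2(\alpha-\gamma)\langle\nabla f,\nabla_xG+\partial_fG\,\nabla f\rangle=2(\alpha-\gamma)h'(f)|\nabla f|^2$; the term $\alpha(\partial_tG+\partial_fG\cdot f_t)$ becomes $\alpha h'(f)f_t$; the term $-\alpha'G$ becomes $-\alpha'h$; and $\alpha\mathscr{L}G$ becomes $\alpha\big(h'(f)\Delta_V f+h''(f)|\nabla f|^2\big)-\alpha h'(f)f_t$. The only thing to notice is that the $\alpha h'(f)f_t$ coming from $\partial_fG\cdot f_t$ cancels the $-\alpha h'(f)f_t$ coming from $\alpha\mathscr{L}G$; collecting the remaining $|\nabla f|^2$-terms into $\big[2(\alpha-\gamma)h'(f)-\gamma'\big]|\nabla f|^2$ then gives exactly the claimed expression for $\mathscr{L}F$. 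There is no genuine obstacle in this lemma — the content is entirely the bookkeeping of this substitution; the only points where care is needed are handling the drift term $\langle V,\nabla f\rangle$ correctly inside $\Delta_V$ when differentiating $h\circ f$, and spotting the sign cancellation of the two $\alpha h'(f)f_t$ contributions.
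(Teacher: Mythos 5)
Your proposal is correct and is exactly the paper's route: the paper also records $\nabla_x G=0$, $\partial_t G=0$, $\partial_f G=h'(f)$ and $\mathscr{L}G=h'(f)\Delta_V f+h''(f)|\nabla f|^2-h'(f)f_t$ (its formulas (3.13)--(3.16)) and then substitutes these into the identity \eqref{36} of Lemma \ref{l31}, with the same cancellation of the two $\alpha h'(f)f_t$ terms that you point out. Nothing further is needed.
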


 Combining the trick in Cao \cite{CH}\footnote{The same trick was also used in Cao-Hamilton \cite{CH} and Li-Xu \cite{LX} a little later. Concretely, that is a observation as follows: $|A_{ij}|^2=|A_{ij}+c g_{ij}|^2-nc^2-2c\cdot tr(A)$, where $A$ is a covariant $2$-tensor and $c$ is a real function on a manifold.} and X.D.Li \cite{XL}, we have the following technical lemma.
\begin{lem}\label{kl}
 Let $u$, $f$, $h$, $F$ be defined as in Lemma \ref{l32} and $\gamma$, $\alpha$, $\varphi$, $c\in C^1(0,\infty)$ are some undetermined functions on time $t$. Then we have
 \begin{eqnarray}\label{318}
 	\mathscr{L}F&\ge& \frac{2\gamma}{m}(\Delta_{V} f+c)^2-2\left\langle \nabla f,\nabla F\right\rangle\nonumber\\
 	&&+\Big(\frac{4\gamma}{m}c+(\alpha-2\gamma)h'+\alpha h''-2K\gamma-\gamma'\Big)|\nabla f|^2\nonumber\\
 	&&+\frac{1}{\alpha}\Big(\frac{4\gamma}{m}c-\alpha h'-\alpha'\Big)(-\alpha f_t+\alpha h-\varphi)\nonumber\\
 	&&+\varphi'-\frac{2\gamma}{m}c^2+\frac{\varphi}{\alpha}\Big(\frac{4\gamma}{m}c-\alpha h'-\alpha'\Big).
 \end{eqnarray}
\end{lem}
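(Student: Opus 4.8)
The plan is to start from the identity for $\mathscr{L}F$ in Lemma \ref{l32} and convert the "bad" terms into the squared quantity $(\Delta_V f + c)^2$ plus controllable remainders. The only genuinely negative contributions on the right-hand side of Lemma \ref{l32} are $2\gamma\,Ric_V(\nabla f,\nabla f)$, which by the hypothesis $Ric_V^m \ge -Kg$ and the definition of $Ric_V^m$ can be bounded below by $-2K\gamma|\nabla f|^2 - \frac{2\gamma}{m-n}\langle V,\nabla f\rangle^2$; this is exactly why we work with $Ric_V^m$ rather than $Ric_V$, as flagged in Remark 2.2(ii). First I would record this curvature estimate, keeping the $\langle V,\nabla f\rangle^2$ term, which will be absorbed below together with $|\nabla^2 f|^2$.

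Next comes the Cao–X.D.~Li trace trick. Writing $A = \nabla^2 f$ (a covariant $2$-tensor) and using $|A_{ij}|^2 = |A_{ij} + \tfrac{c}{n}g_{ij}|^2 - \tfrac{c^2}{n} - \tfrac{2c}{n}\,\mathrm{tr}(A)$ with $\mathrm{tr}(\nabla^2 f) = \Delta f = \Delta_V f - \langle V,\nabla f\rangle$, one gets a lower bound of the form $|\nabla^2 f|^2 \ge \tfrac{1}{n}(\Delta f + c)^2 - \tfrac{c^2}{n} - \tfrac{2c}{n}\Delta f$-ish; but the cleaner route, which is the one X.D.~Li uses for the weighted Laplacian, is to combine $2\gamma|\nabla^2 f|^2$ with the $\frac{2\gamma}{m-n}\langle V,\nabla f\rangle^2$ term produced by the curvature estimate to obtain, via Cauchy–Schwarz, the single clean expression $\frac{2\gamma}{m}(\Delta_V f + c)^2$ (the passage from $n$ to $m$ is precisely the effect of incorporating the drift term, paying the price $\frac{1}{m-n}|\langle V,\nabla f\rangle|^2$). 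So the key step is: $2\gamma|\nabla^2 f|^2 + \frac{2\gamma}{m-n}\langle V,\nabla f\rangle^2 \ge \frac{2\gamma}{m}(\Delta_V f)^2$, and then completing the square against the remaining linear-in-$\Delta_V f$ terms $\alpha h' \Delta_V f$ and $\alpha' f_t$ (note $\Delta_V f = f_t - |\nabla f|^2 - h$ by \eqref{feq}, so $\alpha' f_t$ and $\alpha h'\Delta_V f$ can be rewritten purely in terms of $\Delta_V f$, $|\nabla f|^2$, $h$). Introducing the free function $c$ is exactly the device that lets us complete the square flexibly: we choose to group $\frac{2\gamma}{m}(\Delta_V f)^2$ with the available linear terms so as to form $\frac{2\gamma}{m}(\Delta_V f + c)^2$, and the leftover is a combination of $|\nabla f|^2$, $f_t$, $h$, $\varphi$ with time-dependent coefficients.

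After this regrouping it is pure bookkeeping: collect the coefficient of $|\nabla f|^2$, which should come out to $\frac{4\gamma}{m}c + (\alpha - 2\gamma)h' + \alpha h'' - 2K\gamma - \gamma'$; collect the terms proportional to $(-\alpha f_t + \alpha h - \varphi) = F - \gamma|\nabla f|^2$, which is natural since we want a term $\propto F$ on the right (it will later be killed at an interior maximum), and the coefficient there should be $\frac{1}{\alpha}(\frac{4\gamma}{m}c - \alpha h' - \alpha')$; and collect the genuinely zeroth-order leftover $\varphi' - \frac{2\gamma}{m}c^2 + \frac{\varphi}{\alpha}(\frac{4\gamma}{m}c - \alpha h' - \alpha')$. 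One has to be slightly careful that the $-2\langle\nabla f,\nabla F\rangle$ term passes through untouched and that the $\frac{2\gamma}{m}(\Delta_V f+c)^2$ term is never discarded (it is kept, since in the compact-maximum-principle argument at the maximum of $F$ one may need it, or it can simply be thrown away as $\ge 0$ — here we keep it as stated).

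The main obstacle is purely organizational: making sure every term of Lemma \ref{l32} is accounted for exactly once when we substitute $\Delta_V f = f_t - |\nabla f|^2 - h$ and complete the square, and verifying the three target coefficients match on the nose — in particular tracking where the factor $-2\gamma$ in $(\alpha - 2\gamma)h'$ versus the $2(\alpha-\gamma)h'$ of Lemma \ref{l32} comes from (the discrepancy $-2\gamma h'$ reappears because rewriting $\alpha h'\Delta_V f$ contributes $-\alpha h'|\nabla f|^2$, and the $-\gamma'|\nabla f|^2$ versus curvature $-2K\gamma|\nabla f|^2$ terms sit alongside). I expect no analytic difficulty beyond the two inequalities already named (the curvature lower bound and the Cauchy–Schwarz step producing $\frac{2\gamma}{m}(\Delta_V f + c)^2$); everything else is an equality manipulation, so the inequality sign in \eqref{318} enters only through those two places.
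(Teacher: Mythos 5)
Your proposal follows essentially the same route as the paper's proof: start from Lemma \ref{l32}, use $|\nabla^2 f|^2\ge\frac{1}{n}(\Delta f)^2$ together with the Cauchy--Schwarz-type step that trades $\Delta$ for $\Delta_V$ at the cost of $\frac{1}{m-n}\langle V,\nabla f\rangle^2$, absorb that cost by passing from $Ric_V$ to $Ric_V^m\ge -Kg$ to get $\frac{2\gamma}{m}(\Delta_V f+c)^2-2K\gamma|\nabla f|^2$ after completing the square in the free function $c$, and then regroup using $\Delta_V f=f_t-|\nabla f|^2-h$ — exactly the paper's argument. The only blemish is the sign in your opening curvature bound: since $Ric_V=Ric_V^m+\frac{1}{m-n}V^{\flat}\otimes V^{\flat}$, one gets $2\gamma\, Ric_V(\nabla f,\nabla f)\ge -2K\gamma|\nabla f|^2+\frac{2\gamma}{m-n}\langle V,\nabla f\rangle^2$ (with a plus), which is what your subsequently and correctly stated key inequality $2\gamma|\nabla^2 f|^2+\frac{2\gamma}{m-n}\langle V,\nabla f\rangle^2\ge\frac{2\gamma}{m}(\Delta_V f)^2$ actually requires.
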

\begin{proof}[\textbf{Proof}]
	By Lemma \ref{l32} and basic inequality $|\nabla^2 f|^2\ge \frac{1}{n}(\Delta f)^2$, we have
	 \begin{eqnarray}
	\mathscr{L}F&\ge&\frac{2\gamma}{n}(\Delta f)^2+2\gamma Ric_V(\nabla f,\nabla f)-2\left\langle \nabla f,\nabla F\right\rangle+[2(\alpha-\gamma)h'(f)-\gamma']\cdot|\nabla f|^2\nonumber\\
	&&+\alpha' f_t+\alpha(h'\Delta_V f+h''|\nabla f|^2)-\alpha' h+\varphi'.\nonumber\\
	&=&\frac{2\gamma}{n}(\Delta_V f-\left\langle V,f\right\rangle)^2+2\gamma Ric_V(\nabla f,\nabla f)-2\left\langle \nabla f,\nabla F\right\rangle\nonumber\\
	&&+[2(\alpha-\gamma)h'(f)-\gamma']\cdot|\nabla f|^2+\alpha' f_t+\alpha(h'\Delta_V f+h''|\nabla f|^2)-\alpha' h+\varphi'\nonumber\\
	&\ge&\frac{2\gamma}{n}\left(\frac{(\Delta_{V} f)^2}{\frac{m}{n}}-\frac{\left\langle V,\nabla f\right\rangle^2}{\frac{m}{n}-1}\right)+2\gamma Ric_V(\nabla f,\nabla f)-2\left\langle \nabla f,\nabla F\right\rangle\nonumber\\
	&&+[2(\alpha-\gamma)h'(f)-\gamma']\cdot|\nabla f|^2+\alpha' f_t+\alpha(h'\Delta_V f+h''|\nabla f|^2)-\alpha' h+\varphi'\nonumber\\
		&=&\frac{2\gamma}{m}(\Delta_{V} f)^2+2\gamma Ric_V^m(\nabla f,\nabla f)-2\left\langle \nabla f,\nabla F\right\rangle+[2(\alpha-\gamma)h'(f)-\gamma']\cdot|\nabla f|^2\nonumber\\
	&&+\alpha' f_t+\alpha(h'\Delta_V f+h''|\nabla f|^2)-\alpha' h+\varphi'\nonumber\\
	&=&\frac{2\gamma}{m}(\Delta_{V} f+c)^2-\frac{2\gamma c^2}{m}-\frac{4\gamma}{m}c\cdot \Delta_{V} f+2\gamma Ric_V^m(\nabla f,\nabla f)-2\left\langle \nabla f,\nabla F\right\rangle\nonumber\\
	&&+[2(\alpha-\gamma)h'(f)-\gamma']|\nabla f|^2+\alpha' f_t+\alpha(h'\Delta_V f+h''|\nabla f|^2)-\alpha' h+\varphi'\nonumber\\
	&=&\frac{2\gamma}{m}(\Delta_{V} f+c)^2-\frac{2\gamma c^2}{m}-\Big(\frac{4\gamma}{m} c-\alpha h'\Big)\Delta_{V} f+2\gamma Ric_V^m(\nabla f,\nabla f)-2\left\langle \nabla f,\nabla F\right\rangle\nonumber\\
	&&+\left[2(\alpha-\gamma)h'(f)+\alpha h''-\gamma'\right]|\nabla f|^2+\alpha' f_t-\alpha' h+\varphi'\nonumber\\
	&=&\frac{2\gamma}{m}(\Delta_{V} f+c)^2-\Big(\frac{4\gamma}{m} c-\alpha h'\Big)(f_t-|\nabla f|^2-h)+2\gamma Ric_V^m(\nabla f,\nabla f)-2\left\langle \nabla f,\nabla F\right\rangle\nonumber\\
	&&+\left[2(\alpha-1)h'(f)+\alpha h''-\gamma'\right]|\nabla f|^2+\alpha' f_t-\alpha' h+\varphi'-\frac{2\gamma c^2}{m}\nonumber\\
	&\ge&\frac{2\gamma}{m}(\Delta_{V} f+c)^2-\Big(\frac{4\gamma}{m} c-\alpha h'\Big)(f_t-|\nabla f|^2-h)-2\left\langle \nabla f,\nabla F\right\rangle\nonumber\\
	&&+\left[2(\alpha-\gamma)h'(f)+\alpha h''-2\gamma K-\gamma'\right]|\nabla f|^2+\alpha' f_t-\alpha' h+\varphi'-\frac{2\gamma c^2}{m}\nonumber\\
	&=& \frac{2\gamma}{m}(\Delta_{V} f+c)^2-2\left\langle \nabla f,\nabla F\right\rangle+\left(\frac{4\gamma}{m}c+(\alpha-2\gamma)h'+\alpha h''-2\gamma K-\gamma'\right)|\nabla f|^2\nonumber\\
	&&+\left(\frac{4\gamma}{m}c-\alpha h'-\alpha'\right)(-f_t+h)+\varphi'-\frac{2\gamma}{m}c^2\nonumber\\
	&=&\frac{2\gamma}{m}(\Delta_{V} f+c)^2-2\left\langle \nabla f,\nabla F\right\rangle+\left(\frac{4\gamma}{m}c+(\alpha-2\gamma)h'+\alpha h''-2\gamma K-\gamma'\right)|\nabla f|^2\nonumber\\
	&&+\frac{1}{\alpha}\Big(\frac{4\gamma}{m}c-\alpha h'-\alpha'\Big)(-\alpha f_t+\alpha h-\varphi)\nonumber\\
	&&+\varphi'-\frac{2\gamma}{m}c^2+\frac{\varphi}{\alpha}\left(\frac{4\gamma}{m}c-\alpha h'-\alpha'\right). \nonumber
	\end{eqnarray}
	Here we use the following basic inequality for second inequality above.
	\begin{equation}
		(a-b)^2\ge\frac{a^2}{t}-\frac{b^2}{t-1}\qquad \text{for} \quad t>1.\nonumber
	\end{equation}
	Then we complete our proof.	
\end{proof}

By Lemma \ref{kl} and the definition of $A_1$-system, we get the following lemma immediately.
\begin{lem}\label{l34}
	All notations are as same as in Lemma \ref{kl} and $\gamma$, $\alpha$, $\varphi$, $c$, $h$, $f$ satisfy $A_1$-system
	 {\rm(when $\mathbf{M^{n}}$ is compact)}. 
	Then we have
	\begin{eqnarray}
	\mathscr{L}F&\ge& \frac{2\gamma}{m}(\Delta_{V} f+c)^2-2\left\langle \nabla f,\nabla F\right\rangle+\frac{F}{\alpha}\left(\frac{4\gamma}{m}c-\alpha h'-\alpha'\right).\nonumber
	\end{eqnarray}		
\end{lem}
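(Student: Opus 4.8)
The statement to prove is Lemma \ref{l34}, which claims that under the $A_1$-system hypotheses, the differential inequality for $\mathscr{L}F$ simplifies to
$$\mathscr{L}F \ge \frac{2\gamma}{m}(\Delta_V f + c)^2 - 2\langle \nabla f, \nabla F\rangle + \frac{F}{\alpha}\Big(\frac{4\gamma}{m}c - \alpha h' - \alpha'\Big).$$

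The plan is to start from the conclusion of Lemma \ref{kl}, inequality \eqref{318}, which already provides a lower bound for $\mathscr{L}F$ with the $(\Delta_V f + c)^2$ term and the $-2\langle\nabla f,\nabla F\rangle$ term in place. The remaining terms in \eqref{318} are: a coefficient times $|\nabla f|^2$, namely $\big(\frac{4\gamma}{m}c + (\alpha-2\gamma)h' + \alpha h'' - 2K\gamma - \gamma'\big)|\nabla f|^2$; a term $\frac{1}{\alpha}\big(\frac{4\gamma}{m}c - \alpha h' - \alpha'\big)(-\alpha f_t + \alpha h - \varphi)$; and a purely time-dependent remainder $\varphi' - \frac{2\gamma}{m}c^2 + \frac{\varphi}{\alpha}\big(\frac{4\gamma}{m}c - \alpha h' - \alpha'\big)$. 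The goal is to show these three pieces combine to be bounded below by $\frac{F}{\alpha}\big(\frac{4\gamma}{m}c - \alpha h' - \alpha'\big)$.

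The key observation is the identity $F = \gamma|\nabla f|^2 - \alpha f_t + \alpha h - \varphi$, so $-\alpha f_t + \alpha h - \varphi = F - \gamma|\nabla f|^2$. Substituting this into the middle term of \eqref{318} converts it into $\frac{F}{\alpha}\big(\frac{4\gamma}{m}c - \alpha h' - \alpha'\big) - \frac{\gamma}{\alpha}\big(\frac{4\gamma}{m}c - \alpha h' - \alpha'\big)|\nabla f|^2$. The target term $\frac{F}{\alpha}(\cdots)$ has now appeared; what remains is to check that the leftover $|\nabla f|^2$-coefficient and the time-dependent remainder are each non-negative. Collecting the $|\nabla f|^2$ terms, the total coefficient becomes
$$\frac{4\gamma}{m}c + (\alpha-2\gamma)h' + \alpha h'' - 2K\gamma - \gamma' - \frac{\gamma}{\alpha}\Big(\frac{4\gamma}{m}c - \alpha h' - \alpha'\Big),$$
and after combining the $h'$ terms ($(\alpha-2\gamma)h' + \gamma h' = (\alpha - \gamma)h'$) this is exactly $\frac{4\gamma}{m}c + (\alpha-\gamma)h' + \alpha h'' - 2K\gamma - \gamma' - \frac{\gamma}{\alpha}\big(\frac{4\gamma}{m}c - \alpha'\big)$, whose non-negativity is precisely the first inequality of the $A_1$-system. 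Since $\gamma > 0$ by the third $A_1$-condition, and since $|\nabla f|^2 \ge 0$, this term contributes a non-negative amount; we simply drop it. Similarly, the time-dependent remainder $\varphi' - \frac{2\gamma}{m}c^2 + \frac{\varphi}{\alpha}\big(\frac{4\gamma}{m}c - \alpha h' - \alpha'\big)$ is non-negative by the second inequality of the $A_1$-system, so we drop it as well. This yields exactly the claimed inequality.

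This lemma is essentially a bookkeeping consequence of Lemma \ref{kl} plus the definition of the $A_1$-system, so there is no serious obstacle; the only point requiring care is the algebra of substituting $F - \gamma|\nabla f|^2$ for $-\alpha f_t + \alpha h - \varphi$ and correctly recombining the coefficient of $|\nabla f|^2$ so that it matches the left-hand side of the first $A_1$-inequality. The sign conditions $\alpha, \gamma > 0$ from the $A_1$-system are what allow us to discard the non-negative $|\nabla f|^2$ contribution without reversing the inequality direction; the condition $\frac{4\gamma}{m}c - \alpha h' - \alpha' > 0$ is not needed here for the inequality itself but will matter later when $F$ is used with a maximum principle argument.
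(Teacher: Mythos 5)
Your proposal is correct and follows essentially the same route as the paper: the paper likewise substitutes the two $A_1$-inequalities into \eqref{318} (noting that the first $A_1$-inequality, after moving $\gamma h'$ across, bounds the $|\nabla f|^2$-coefficient below by $\frac{\gamma}{\alpha}\big(\frac{4\gamma}{m}c-\alpha h'-\alpha'\big)$, and the second kills the time-dependent remainder) and then recombines $\gamma|\nabla f|^2-\alpha f_t+\alpha h-\varphi$ into $F$. Your reorganization (substituting $F-\gamma|\nabla f|^2$ first and then dropping the non-negative leftovers) is the same bookkeeping in a different order, and your algebra, including the recombination $(\alpha-2\gamma)h'+\gamma h'=(\alpha-\gamma)h'$, is accurate.
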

\begin{proof}[\textbf{Proof}]
	By the definition of $A_1$-system, we have
	\begin{equation}\label{319}
		\frac{4\gamma}{m}c+(\alpha-2\gamma)h'+\alpha h''-2K\gamma-\gamma'\ge\frac{\gamma}{\alpha}\left(\frac{4\gamma}{m}c-\alpha h'-\alpha'\right).
	\end{equation}
	\begin{equation}\label{320}
		\varphi'-\frac{2\gamma}{m}c^2+\frac{\varphi}{\alpha}\left(\frac{4\gamma}{m}c-\alpha h'-\alpha'\right)\ge 0.
	\end{equation}
	Substituting \eqref{319} and \eqref{320} into \eqref{318}, we get
	 \begin{eqnarray}
	\mathscr{L}F&\ge& \frac{2\gamma}{m}(\Delta_{V} f+c)^2-2\left\langle \nabla f,\nabla F\right\rangle+\frac{\gamma}{\alpha}\left(\frac{4\gamma}{m}c-\alpha h'-\alpha'\right)|\nabla f|^2\nonumber\\
	&&+\frac{1}{\alpha}\left(\frac{4\gamma}{m}c-\alpha h'-\alpha'\right)(-\alpha f_t+\alpha h-\varphi)\nonumber\\
	&=&\frac{2\gamma}{m}(\Delta_{V} f+c)^2-2\left\langle \nabla f,\nabla F\right\rangle+\frac{F}{\alpha}\left(\frac{4\gamma}{m}c-\alpha h'-\alpha'\right).\nonumber
	\end{eqnarray}
\end{proof}

\noindent\textbf{Remark 3.1} It is obvious that Lemma \ref{l34} is also valid for $A_2$ and $A_3$-system because their definitions both contain \eqref{319} and \eqref{320}.

\subsection{\textbf{The Proof of Theorem \ref{CGD}}}
In this subsection, we will prove differential Harnack inequalities on closed manifolds or compact manifolds with convex boundaries.

\begin{proof}[\textbf{Proof of Theorem \ref{CGD}}]
	Case1: $\mathbf{M^{n}}$ is a closed manifold.\\
	We define $F_0=\gamma|\nabla f|^2-\alpha f_t+\alpha h$. For fixed $T>0$, we denote $M(T)=\max\limits_{\mathbf{M^n}\times \left[0,T\right]}|F_0|$. By the definition of $A_1$ system, we can choose $\epsilon_0\in(0,T)$ such that $-\varphi\le -M(T)$ in $(0,\epsilon_0]$. Hence $F\le 0$ on $\mathbf{M^n}\times (0,\epsilon_0]$. 
	
	On $\mathbf{M^n}\times [\epsilon_0, T]$, by Lemma \ref{l34} and the definition of $A_2$-system, we have
\begin{eqnarray}\label{321}
\mathscr{L}F+2\left\langle \nabla f,\nabla F\right\rangle> 0\qquad \text{on}\quad \{x\in\mathbf{M^{n}}:F>0\}.
\end{eqnarray}
Applying the weak maximal principle on closed manifold (see \cite{CLN}), we get $F\le 0$ on $\mathbf{M^n}\times [\epsilon_0, T]$. Hence $F\le0$ on $\mathbf{M^n}\times (0, T]$.

Case2: $\mathbf{M^{n}}$ is a compact manifold with convex boundry. As in Case1, if $F$ get its positive maximal value at ($x_0$,$t_0$), then $t_0>\epsilon_0$. Then $x_0\in\partial \mathbf{M}$ or \eqref{321} will be invalid at that point. Then we follow standard argument as follows: By strong maximal principle we have $\frac{\partial F}{\partial \textbf{n}}(x_0,t_0)>0$, here $\textbf{n}$ is the unit outer normal vector filed. If we choose an orthonormal basis $(e_i)_{1\le i\le n}$ for $TM$ at a neighborhood of $x_0$, where $\textbf{n}
=e_n$. Direct computation gives:
\begin{eqnarray}\label{322}
	F_n&=&2\gamma\sum_{i=1}^{n-1}f_jf_{jn}+2\gamma f_{n}f_{nn}+\alpha h' f_{n}-\alpha f_{tn}\nonumber\\
	&=&2\gamma\sum_{j=1}^{n-1}f_jf_{jn}\nonumber\\
	&=&2\gamma\sum_{i,j=1}^{n-1}-A_{ij}f_if_{j}\nonumber\\
	&\le&0.
\end{eqnarray}
Here $A_{ij}$ are components of the second fundamental form of $\partial\mathbf{M^{n}}$. The second equality is due to Neumann boundary condition, the third is due to Weingarten fomula and the last inequality is due to the convexity of boundry. Hence \eqref{322} yields a contradiction.

This ends the proof of Theorem \ref{CGD}.
\end{proof}

\section{\textbf{The proof of Theorem \ref{GD}}}
In this section, we will generalize the differential Harnack inequalities of equation \eqref{oeq} to complete Riemannian manifolds using the cut-off function technique from \cite{LY}. In order to obtain the cut-off function, we will utilize Bakry-Qian's generalized Laplacian comparison theorem\footnote{Bakry-Qian \cite{BQ2} and X.D.Li \cite{XL} independently proved this theorem by different approaches.} (see \cite{BQ2,XL}). In the proof of Theorem \ref{GD}, we will employ an argument based on \cite{LY} to solve the quadratic inequality associated with the auxiliary function.

First, by equation \eqref{oeq} and same notations are same as in Section 3, we have

\begin{equation}
	f_t=\Delta_{V} f+|\nabla f|^2+h(f),
\end{equation}
where $h(f)=H(e^f)\cdot e^{-f}$. Hence
\begin{eqnarray}
	\Delta_{V} f+c&=&f_t-|\nabla f|^2-h+c\nonumber\\
	&=&\frac{\gamma|\nabla f|^2-\varphi-F}{\alpha}-|\nabla f|^2+c\nonumber\\
	&=&\frac{\gamma-\alpha}{\alpha}|\nabla f|^2-\frac{\varphi}{\alpha}+c-\frac{F}{\alpha}.
\end{eqnarray}

As in Li-Yau \cite{LY}, we choose the cut-off function $\phi$ such that:\\
$(1)$ $supp\phi\subset B(x_0,2R)$ and $\phi=1$ on $B(x_0,R)$.\\
$(2)$ $\frac{|\nabla\phi|}{\phi}\le\frac{C}{R}$.\\
$(3)$ $|\Delta_{V}\phi|\le\frac{C}{R^2}\left(1+\sqrt{K}R\coth(\sqrt{KR})\right)$ on $\mathbf{M^{n}}\setminus Cut(x_0)$.\\
\textbf{Remark 4.1} The conclusion (3) depends on the condition $Ric_V^m\ge-Kg$ and Bakry-Qian's Laplacian comprison theorem (\cite[Theorem 1 and Corollary 2]{XL}). For the completeness, we provide the construction of $\phi$ as follows: first, we choose a function $m\in C^{\infty}([0,\infty))$ such that $m=1$ in $[0,1]$, $supp(m)\subset[0,2]$, $|m'|\le 2$ and $|m''|\le C$. Then the function 
\begin{equation}
	\phi(x)=m\Big(\frac{d(x,x_0)}{R}\Big)
\end{equation}
obviously satisfies (1) and (2). For (3), we have
\begin{eqnarray}
     |\Delta_{V}\phi(x)|&=&|m'\cdot\frac{1}{R}\Delta_{V}d+m''\cdot \frac{1}{R^2}|\nabla d|^2|\nonumber\\
     &\le&\frac{2}{R}|\Delta_{V}d|+\frac{C}{R^2}\nonumber\\
     &\le&\frac{C}{R}\sqrt{K}\coth(\sqrt{KR})+\frac{C}{R^2}
\end{eqnarray}
outside of cut locus of $x_0$, here the last inequality is due to Bakry-Qian's generalized Laplacian comprison theorem.


Now, we start to prove Theorem 2.2. 
\begin{proof}[\textbf{Proof of Theorem 2.2}]  We fix a positive time $T$. Let $G:=\beta\phi F$ be the auxiliary function. We assume $G$ achieves its maximum at $(x,s)$ on $\mathbf{M^{n}}\times[0,T]$. Without loss of generality, we may assmue that $G(x,s)>0$ (hence $s>0$ by $\beta(0)=0$) and $x\notin Cut(x_0)$ (or we can use Calabi's argument, see \cite{LY}). At the point $(x,s)$, it follows that
\begin{eqnarray}
	0\ge \mathscr{L}G&=&\mathscr{L}(\beta\phi)\cdot F+\mathscr{L}F\cdot \beta\phi+2\beta\left\langle\nabla \phi,\nabla F\right\rangle\nonumber\\
	&=&(\beta\Delta_V \phi-\beta'\phi)F+\mathscr{L}F\cdot \beta\phi+2\beta\left\langle\nabla \phi,\frac{\nabla(\phi F)-\nabla\phi\cdot F}{\phi}\right\rangle\nonumber\\
	&=&-\frac{\beta'}{\beta}G+\frac{\Delta_{V}\phi}{\phi}G-2\frac{|\nabla \phi|^2}{\phi^2}\cdot G+\mathscr{L}F\cdot \beta\phi.
\end{eqnarray}
By Remark 3.1, Lemma 3.4 and (4.2), we have
\begin{eqnarray}
\mathscr{L}F\ge\frac{2\gamma}{m\alpha^2}\Big[(\gamma-\alpha)|\nabla f|^2-\varphi+\alpha c-F\Big]^2
-2\left\langle \nabla f,\nabla F\right\rangle+\frac{F}{\alpha}\left(\frac{4\gamma}{m}c-\alpha h'-\alpha'\right).\nonumber\\
\end{eqnarray}
Plugging (4.6) into (4.5) yields
\begin{eqnarray}
0&\ge&-\frac{\beta'}{\beta}G+\frac{\Delta_{V}\phi}{\phi}G-2\frac{|\nabla \phi|^2}{\phi^2}\cdot G+\frac{2\gamma\beta\phi}{m\alpha^2}\Big[(\gamma-\alpha)|\nabla f|^2-\varphi+\alpha c-F\Big]^2\nonumber\\
&&-2\beta\phi\left\langle \nabla f,\nabla F\right\rangle
+\frac{G}{\alpha}\Big(\frac{4\gamma}{m}c-\alpha h'-\alpha'\Big)\nonumber\\
&=&-\frac{\beta'}{\beta}G+\frac{\Delta_{V}\phi}{\phi}G-2\frac{|\nabla \phi|^2}{\phi^2}\cdot G+\frac{2\gamma\beta\phi}{m\alpha^2}\Big[(\gamma-\alpha)|\nabla f|^2-\varphi+\alpha c-F\Big]^2\nonumber\\
&&+2G\left\langle \nabla f,\frac{\nabla \phi}{\phi}\right\rangle
+\frac{G}{\alpha}\Big(\frac{4\gamma}{m}c-\alpha h'-\alpha'\Big)\nonumber\\
&\ge&-\frac{\beta'}{\beta}G+\frac{\Delta_{V}\phi}{\phi}G-2\frac{|\nabla \phi|^2}{\phi^2}\cdot G+\frac{2\gamma\beta\phi F^2}{m\alpha^2}+\frac{4\gamma\beta\phi F}{m\alpha^2}\Big((\alpha-\gamma)|\nabla f|^2+\varphi-\alpha c\Big)\nonumber\\
&&+2G\left\langle \nabla f,\frac{\nabla \phi}{\phi}\right\rangle
+\frac{G}{\alpha}\Big(\frac{4\gamma}{m}c-\alpha h'-\alpha'\Big)\nonumber\\
&=&-\frac{\beta'}{\beta}G+\frac{\Delta_{V}\phi}{\phi}G-2\frac{|\nabla \phi|^2}{\phi^2}\cdot G+\frac{2\gamma G^2}{m\alpha^2\beta\phi}+\frac{4\gamma G}{m\alpha^2}\Big((\alpha-\gamma)|\nabla f|^2+\varphi-\alpha c\Big)\nonumber\\
&&+2G\left\langle \nabla f,\frac{\nabla \phi}{\phi}\right\rangle
+\frac{G}{\alpha}\Big(\frac{4\gamma}{m}c-\alpha h'-\alpha'\Big).\nonumber\\
\end{eqnarray}
The first equality above uses the following fact: 
\begin{equation}
-2\beta\phi\left\langle \nabla f,\nabla F\right\rangle=-2\left\langle \nabla f,\nabla  (\beta\phi F)-\beta F\nabla \phi \right\rangle=2G\left\langle \nabla f,\frac{\nabla \phi}{\phi}\right\rangle\nonumber
\end{equation}
at the maximum point.

Multiplying the $\frac{m\alpha^2\beta\phi}{2\gamma G}$ to both sides of (4.7), we obtain
\begin{align}
	G\le&\frac{m\alpha^2\beta\phi}{2\gamma}\Big[\frac{\beta'}{\beta}-\frac{\Delta_{V}\phi}{\phi}+2\frac{|\nabla \phi|^2}{\phi^2}-\frac{4\gamma }{m\alpha^2}\Big((\alpha-\gamma)|\nabla f|^2+\varphi-\alpha c\Big)\nonumber\\
	&-\frac{2}{\phi}\left\langle \nabla f,\nabla \phi\right\rangle-\frac{1}{\alpha}\Big(\frac{4\gamma}{m}c-\alpha h'-\alpha'\Big)\Big]\nonumber\\
	=&\frac{m\alpha^2\beta\phi}{2\gamma}\Big[\frac{\beta'}{\beta}+\frac{\alpha'}{\alpha}+h'-\frac{4\varphi\gamma}{m\alpha^2}-\frac{\Delta_{V}\phi}{\phi}+2\frac{|\nabla \phi|^2}{\phi^2}\nonumber\\
	&-\frac{4\gamma }{m\alpha^2}(\alpha-\gamma)|\nabla f|^2-\frac{2}{\phi}\left\langle \nabla f,\nabla \phi\right\rangle\Big]\nonumber\\
	\le&\frac{m\alpha^2\beta}{2\gamma}\Big[-\Delta_{V}\phi+2\frac{|\nabla \phi|^2}{\phi}+2|\nabla \phi||\nabla f|-\frac{4\gamma\phi }{m\alpha^2}(\alpha-\gamma)|\nabla f|^2\Big]\nonumber\\
	&+\frac{m\alpha^2\beta\phi}{2\gamma}\Bigg(h'+\frac{\alpha'}{\alpha}+\frac{\beta'}{\beta}-\frac{4\gamma\varphi}{m\alpha^2}\Bigg).
\end{align}
Due to $\alpha>\gamma$, we have
\begin{equation}
	2|\nabla \phi||\nabla f|\le \frac{4\gamma\phi }{m\alpha^2}(\alpha-\gamma)|\nabla f|^2+\frac{m\alpha^2 }{4\gamma(\alpha-\gamma)}\frac{|\nabla \phi|^2}{\phi}.
\end{equation}
Hence 
\begin{align}
	G\le&\frac{m\alpha^2\beta}{2\gamma}\Big[-\Delta_{V}\phi+2\frac{|\nabla \phi|^2}{\phi}+\frac{m\alpha^2}{4\gamma(\alpha-\gamma)} \frac{|\nabla \phi|^2}{\phi}\Big]\nonumber\\
	&+\frac{m\alpha^2\beta\phi}{2\gamma}\Bigg(h'+\frac{\alpha'}{\alpha}+\frac{\beta'}{\beta}-\frac{4\gamma\varphi}{m\alpha^2}\Bigg).
\end{align}
Combining (4.10) and the definition of $A_3$-system, we have
\begin{align}
	G(x,s)\le&\frac{m\alpha^2\beta}{2\gamma}\Big[|\Delta_{V}\phi|+2\frac{|\nabla \phi|^2}{\phi}+\frac{m\alpha^2 }{4\gamma(\alpha-\gamma)}\frac{|\nabla \phi|^2}{\phi}\Big]\nonumber\\
	\le&\frac{m\alpha^2\beta}{2\gamma}\Big[\frac{C}{R^2}(1+\sqrt{K}R\coth(\sqrt{K}R))+\frac{m\alpha^2}{4\gamma(\alpha-\gamma)}\cdot\frac{C}{R^2}\Big]\nonumber\\
	\le &
	\begin{cases}
	\frac{m\alpha^2\beta(T)}{2\epsilon}\Big[\frac{C}{R^2}(1+\sqrt{K}R\coth(\sqrt{K}R))+\frac{m\alpha^2(T)}{4\epsilon^2}\cdot\frac{C}{R^2}\Big]\quad\,\,\,\,	\text{if}\quad (I)\quad\text{holds}\vspace{2ex}\nonumber\\
	\frac{m\alpha^2\beta}{2\gamma}(T)\cdot\frac{C}{R^2}(1+\sqrt{K}R\coth(\sqrt{K}R))+m^2\alpha^4(T)\frac{C}{R^2}\quad\,\,	\text{if}\quad (II)\quad\text{holds}\vspace{2ex}\nonumber\\
	\frac{m\alpha^2\beta}{2\gamma}(T)\cdot\frac{C}{R^2}(1+\sqrt{K}R\coth(\sqrt{K}R))+\frac{m^2\alpha^4}{\gamma^2}(T)\cdot\frac{C}{R^2}\quad	\text{if}\quad (III)\quad\text{holds}.\nonumber
	\end{cases}
\end{align}
Hence for any $y\in B_{x_0}(R)$, we get
\begin{align}
	\beta(T)F(y,T)\le&\sup_{z\in B_{x_0}(R)}\beta(T)\phi(z)F(z,T)\nonumber\\
	\le&G(x,s)\nonumber\\
	\le &
	\begin{cases}
		\frac{m\alpha^2\beta(T)}{2\epsilon}\Big[\frac{C}{R^2}(1+\sqrt{K}R\coth(\sqrt{K}R))+\frac{m\alpha^2(T)}{4\epsilon^2}\cdot\frac{C}{R^2}\Big]\quad	\,\,\,\,\text{if}\quad (I)\quad\text{holds}\vspace{2ex}\nonumber\\
		\frac{m\alpha^2\beta}{2\gamma}(T)\cdot\frac{C}{R^2}(1+\sqrt{K}R\coth(\sqrt{K}R))+m^2\alpha^4(T)\frac{C}{R^2}\quad\,\,	\text{if}\quad (II)\quad\text{holds}\vspace{2ex}\nonumber\\
		\frac{m\alpha^2\beta}{2\gamma}(T)\cdot\frac{C}{R^2}(1+\sqrt{K}R\coth(\sqrt{K}R))+\frac{m^2\alpha^4}{\gamma^2}(T)\cdot\frac{C}{R^2}\quad	\text{if}\quad (III)\quad\text{holds},\nonumber
	\end{cases}
\end{align}
i.e.

\begin{align}
	F(y,T)\le
	\begin{cases}
		\frac{m\alpha^2(T)}{2\epsilon}\Big[\frac{C}{R^2}(1+\sqrt{K}R\coth(\sqrt{K}R))+\frac{m\alpha^2(T)}{4\epsilon^2}\cdot\frac{C}{R^2}\Big]\quad\,\,\,\,	\text{if}\quad (I)\quad\text{holds}\vspace{2ex}\nonumber\\
		\frac{m\alpha^2}{2\gamma}(T)\cdot\frac{C}{R^2}(1+\sqrt{K}R\coth(\sqrt{K}R))+m^2\frac{\alpha^4}{\beta}(T)\frac{C}{R^2}\quad\,\,	\text{if}\quad (II)\quad\text{holds}\vspace{2ex}\nonumber\\
		\frac{m\alpha^2}{2\gamma}(T)\cdot\frac{C}{R^2}(1+\sqrt{K}R\coth(\sqrt{K}R))+\frac{m^2\alpha^4}{\beta\gamma^2}(T)\cdot\frac{C}{R^2}\quad	\text{if}\quad (III)\quad\text{holds}.\nonumber
	\end{cases}
\end{align}
Because $T$ is arbitrary, we get the local gradient estimate. For global estimate, we let $R\to\infty$, then
\begin{align}
	F(y,T)\le 0
\end{align}
 holds for any $(y,T)\in \mathbf{M^{n}}\times(0,\infty)$. This ends the proof of Theorem \ref{GD}.
\end{proof}

\section{\textbf{Application to logarithmic type equation}}
In this section, we will consider the global differential Harnack inequality for logarithmic-type equations. In particular, we have proved sharp differential Harnack inequality, Harnack inequality, and Liouville-type theorem under the condition of non-negative Bakry-\'{E}mery curvature.

As before, for logarithmic type equation, we have
\begin{equation}
	h(f):=\frac{a\cdot e^f\cdot \ln(e^f)}{e^f}=a f.
\end{equation}
Hence solving $A_3$-system becomes
\begin{align}
	\begin{cases}
		\frac{4\gamma}{m}c+a(\alpha-\gamma)-2K\gamma-\gamma'\ge \frac{\gamma}{\alpha}\Big(\frac{4\gamma}{m}c-\alpha'\Big)\nonumber\\	
		\varphi'-\frac{2\gamma}{m}c^2+\frac{\varphi}{\alpha}\Big(\frac{4\gamma}{m}c-a\alpha-\alpha'\Big)\ge 0\nonumber\\
		a+\frac{\alpha'}{\alpha}+\frac{\beta'}{\beta}-\frac{4\gamma\varphi}{m\alpha^2}\le 0\nonumber\\
		\beta(0)=0\quad\text{and}\quad\beta >0\nonumber
	\end{cases}
\end{align}
and any one of (I)-(III) holds.

In this section, we assume $K=0$.

\begin{proof}[\textbf{Proof of Theorem 2.3}]
We divide our argument into compact and noncompact case.\\
Case1: $\mathbf{M^{n}}$ is a closed manifold or compact manifold with convex boundry at case $a\neq0$. In this case, we require to solve the easier $A_2$-system:
\begin{align}
	\begin{cases}
		\frac{4\gamma}{m}c+(\alpha-\gamma)a-\gamma'\ge \frac{\gamma}{\alpha}\Big(\frac{4\gamma}{m}c-\alpha'\Big)\nonumber\\
		\varphi'-\frac{2\gamma}{m}c^2+\frac{\varphi}{\alpha}\Big(\frac{4\gamma}{m}c-\alpha a-\alpha'\Big)\ge 0\nonumber\\
		\frac{4\gamma}{m}c-\alpha a-\alpha'>0\quad\text{and}\quad\gamma>0\nonumber
	\end{cases}
\end{align}
and
\begin{align}
	\begin{cases}
		\lim\limits_{t\to0^{+}}\alpha \qquad\text{exists}\\
		\lim\limits_{t\to0^{+}}\gamma \qquad\text{exists}\\
		\lim\limits_{t\to0^{+}}\varphi=\infty.\nonumber
	\end{cases}
\end{align}
It is easy to see 
\begin{eqnarray}
\gamma(t)=1, \qquad \alpha(t)=1,\nonumber\\
\varphi(t)=\frac{ ma}{2(1-e^{-at})}\quad \text{and}\quad c(t)=\frac{ma}{2(1-e^{-at})}
\end{eqnarray}
satisfy $A_2$-system. By Theorem 2.1, we have the global estimate:
\begin{equation}
\frac{|\nabla u|^2}{u^2}-\frac{u_t}{u}+a\ln u\leq\frac{ ma}{2(1-e^{-at})}.
\end{equation}
Case2: $\mathbf{M^{n}}$ is complete manifold with $a>0$.\\
We choose
\begin{eqnarray}
\gamma(t)=1, \quad \alpha(t)=\alpha>1,\quad \beta(t)=1-e^{-at},\nonumber\\
\varphi(t)=\frac{\alpha^2\cdot ma}{2(1-e^{-at})},\qquad c(t)=\frac{\alpha\cdot ma}{2(1-e^{-at})}.
\end{eqnarray}
Then (5.4) solves $A_3$-system.
By Theorem 2.2 (satisfies (I)), we have the global estimate:
\begin{equation}
\frac{|\nabla u|^2}{u^2}-\alpha\frac{u_t}{u}+\alpha a\ln u\leq\frac{\alpha^2\cdot ma}{2(1-e^{-at})}.
\end{equation}
Letting $\alpha\to1^+$, we have
\begin{equation}
\frac{|\nabla u|^2}{u^2}-\frac{u_t}{u}+a\ln u\leq\frac{ ma}{2(1-e^{-at})}.
\end{equation}
Case3: $\mathbf{M^{n}}$ is complete manifold with $a<0$. 

In this case, if we are unable to find a solution on the interval $(0,\infty)$ for the $A_3$-system, we can instead focus on studying the asymptotic behavior of a family of solutions on a finite time interval. As mentioned in Remark 2.2, even if we only have the solution of the $A_3$-system on a finite time interval, we can still derive the corresponding differential Harnack inequalities on that interval. Using this simple but important observation, we can proceed with the following analysis argument.

First, we notice that for some undetermined positive function $\gamma$, the following solution 
\begin{eqnarray}
0<\gamma(t)<1, \quad \gamma\quad\text{is non-increasing,}\quad \alpha(t)=1,\quad \beta(t)=e^{-at}-1,\nonumber\\
\varphi(t)=\frac{1}{\gamma}\cdot\frac{ma}{2(1-e^{-at})}\quad\text{and}\quad c(t)=\frac{1}{\gamma}\cdot\frac{ma}{2(1-e^{-at})}
\end{eqnarray}
satisfies all inequalities in the $A_3$-system except the first inequality.

This fact could be obtained by direct computation as follows. 
Define $\varphi_0=\frac{ma}{2(1-e^{-at})}$, hence $\varphi(t)=c(t)=\frac{1}{\gamma}\cdot\varphi_0$. Then we have
\begin{eqnarray}
&&\varphi'-\frac{2\gamma}{m}c^2+\frac{\varphi}{\alpha}\Big(\frac{4\gamma}{m}c-a\alpha-\alpha'\Big)\nonumber\\
&=&(\frac{1}{\gamma})'\varphi_0+\frac{1}{\gamma}\varphi_0'-\frac{2}{m}\frac{\varphi_0^2}{\gamma}+\frac{4}{m}\frac{\varphi_0^2}{\gamma}-a\frac{\varphi_0}{\gamma}\nonumber\\
&=&(\frac{1}{\gamma})'\varphi_0\nonumber\\
&\ge& 0
\end{eqnarray}
and
\begin{eqnarray}
a+\frac{\alpha'}{\alpha}+\frac{\beta'}{\beta}-\frac{4\gamma\varphi}{m\alpha^2}=\frac{a}{e^{-at}-1}\le 0.
\end{eqnarray}
Then the first inequality of $A_3$-system becomes:
\begin{equation}
\frac{4\varphi_0}{m}(1-\gamma)+a(1-\gamma)-\gamma'\ge 0,
\end{equation}
i.e.
\begin{equation}
\frac{4\varphi_0}{m}+a+(\frac{1}{\gamma})'\frac{\gamma^2}{1-\gamma}\ge 0.
\end{equation}
Define $k=\frac{1}{\gamma}$, $l=k-1>0$. Then
\begin{eqnarray}
\text{(5.11)}&&\Longleftrightarrow\frac{4\varphi_0}{m}+a+\frac{k'}{k(k-1)}\ge 0.\nonumber\\
&&\Longleftrightarrow\frac{4\varphi_0}{m}+a+\frac{l'}{l(l+1)}\ge 0.\nonumber\\
&&\Longleftrightarrow\Big(\frac{4\varphi_0}{m}+a\Big)l(l+1)+l'\ge 0.
\end{eqnarray}
Therefore, we can define $l$ as follows:
\begin{eqnarray}
l(t)=\nonumber
\begin{cases}
\epsilon \qquad\qquad\qquad\qquad\,\text{if} \quad 0<t\le\frac{\ln 3}{-a}\nonumber\\
m(t-\frac{\ln 3}{-a})\qquad\qquad\text{if}  \quad t\ge\frac{\ln 3}{-a}.
\end{cases}
\end{eqnarray}
Here $\epsilon>0$ and $m$ is the unique solution of the following Cauchy problem:
\begin{eqnarray}
\begin{cases}
m'(t)=m(m+1)(-a)\cdot\frac{3(e^{-at}-1)}{3e^{-at}-1}\vspace{2ex}\\
m(0)=\epsilon.
\end{cases}
\end{eqnarray}
We denote the solution of equation (5.13) as $m_\epsilon(t)$. Assuming that $(B_\epsilon,A_\epsilon)$ is the maximal existence interval of (5.13), it is easy to see that $B_\epsilon<0$.

\textbf{Claim 1}: $l$ is $C^1$ on $(0,A_\epsilon+\frac{\ln 3}{-a})$.

\textbf{Proof of Claim 1} By the ODE fundamental theorem, we can conclude that $m$ is smooth on $(B_\epsilon,A_\epsilon)$. Therefore, $l$ is also smooth on $(0,\frac{\ln 3}{-a}+A_\epsilon)$. Additionally, it is easy to see that $l$ is continuous, differentiable, and that $l'(t)$ is continuous as well. We can check the continuity of $l'$ at time $\frac{\ln3}{-a}$, which is zero, to verify this.

\textbf{Claim 2}: $\lim\limits_{\epsilon\to0^+}A_\epsilon=\infty$.

\textbf{Proof of Claim 2}   We devide the argument into three steps.

Step 1: $\delta_0:=\frac{1}{48(-a)}$, then (5.13) has a unique solution in $[-\delta_0,\delta_0]$ for any $\epsilon\in[-\frac{1}{4},\frac{1}{4}]$. 

We rewrite (5.13) as
\begin{eqnarray}
\begin{cases}
m'(t)=(m+\epsilon)(m+1+\epsilon)(-a)\cdot\frac{3(e^{-at}-1)}{3e^{-at}-1}\vspace{2ex}\\
m(0)=0.
\end{cases}
\end{eqnarray}
Then $m$ solves (5.14) if and only if $m+\epsilon$ solves (5.13).
We define 
\begin{equation}
g(t,m,\epsilon)=(m+\epsilon)(m+1+\epsilon)(-a)\cdot\frac{3(e^{-at}-1)}{3e^{-at}-1}\nonumber\\
\end{equation}
and region $\mathbf{R}=[\frac{\ln 2}{a},\frac{\ln2}{-a}]\times[-\frac{1}{4},\frac{1}{4}]\times[-\frac{1}{4},\frac{1}{4}]$.
Then we have\\
(i) $g$ is uniformly Lipschitz in $\mathbf{R}$ on variable $m$. In fact, 
\begin{eqnarray}
&&|g(t,m_1,\epsilon)-g(t,m_2,\epsilon)|\nonumber\\
&=&|(m_1+\epsilon)(m_1+1+\epsilon)-(m_1+\epsilon)(m_1+1+\epsilon)|(-a)\frac{3(e^{-at}-1)}{3e^{-at}-1}\nonumber\\
&=&|2\epsilon+1+2\xi|\cdot|m_1-m_2|(-a)\frac{3(e^{-at}-1)}{3e^{-at}-1}\nonumber\\
&\le&2(-a)\cdot6\cdot|m_1-m_2|,
\end{eqnarray}
where the second equality is due to the Lagrange mean value theorem.\\
(ii) Then $M\le12(-a)$, where $M:=\max_{(t,m,\epsilon)\in\mathbf{R}}|g(t,m,\epsilon)|$. ODE fundemental theorem gives that equation (5.14) has unique solution in $[-\eta_0,\eta_0]$, where\vspace{3mm}
\begin{equation}
\eta_0=\min\Big(\frac{\ln 2}{-a},\frac{\frac{1}{4}}{M}\Big)\ge\frac{1}{48(-a)}.\nonumber
\end{equation}
Hence Step 1 finishes.

Step 2:  $\delta_0$ is as in step1, $i\in\mathbb{N}$. Then the following Cauchy problem
\begin{eqnarray}
\begin{cases}
m'(t)=m(m+1)(-a)\cdot\frac{3(e^{-at}-1)}{3e^{-at}-1}\vspace{2ex}\\
m(i\delta_0)=m_i
\end{cases}
\end{eqnarray}
 has a unique solution in $[(i-1)\delta_0,(i+1)\delta_0]$ for any $m_i\in[-\frac{1}{4},\frac{1}{4}]$.
 
As same as in Step 1, (5.16) is equivalent to
\begin{eqnarray}
\begin{cases}
m'(t)=(m+m_i)(m+1+m_i)(-a)\cdot\frac{3(e^{-at}-1)}{3e^{-at}-1}\vspace{2ex}\\
m(i\delta_0)=0.
\end{cases}
\end{eqnarray}
Define 
\begin{equation}
g_i(t,m,m_i)=(m+m_i)(m+1+m_i)(-a)\cdot\frac{3(e^{-at}-1)}{3e^{-at}-1}\nonumber\\
\end{equation}
and region $\mathbf{R_i}=[(i-1)\delta_0,(i+1)\delta_0]\times[-\frac{1}{4},\frac{1}{4}]\times[-\frac{1}{4},\frac{1}{4}]$.
Same process as in Step 1 yields that (5.17) (hence (5.16)) has a unique solution in $[(i-1)\delta_0,(i+1)\delta_0]$ for any $m_i\in[-\frac{1}{4},\frac{1}{4}]$.

Step 3: For $T\in(0,\infty)$, $N:=\lceil\frac{T}{\delta_0}\rceil$ (minimal integer larger than $\frac{T}{\delta_0}$).

By Step 1, (5.13) has a unique solution in $[-\delta_0,\delta_0]$ for any $\epsilon\in[-\frac{1}{4},\frac{1}{4}]$. We denote the unique solution of (5.13) as $m(t,\epsilon)$. By ODE fundemental theorem again:
\begin{equation}
\lim\limits_{\epsilon\to0}\parallel m(\cdot,\epsilon)-m(\cdot,0)\parallel_{L^{\infty}[0,\delta_0]}=0.
\end{equation}
It is easy to see $m(t,0)\equiv0$. Then there exists $\epsilon_1>0$ such that if $\epsilon\in(-\epsilon_1,\epsilon_1)$, we have
\begin{equation}
\parallel m(\cdot,\epsilon)\parallel_{L^{\infty}[0,\delta_0]}\le\frac{1}{4}.
\end{equation}
Hence $m(\delta_0,\epsilon)\in[-\frac{1}{4},\frac{1}{4}]$. By Step 2, (5.16) has a unique solution in $[0,2\delta_0]$ for any $m_1\in[-\frac{1}{4},\frac{1}{4}]$. If we set $m_1=m(\delta_0,\epsilon)$, then the corresponding solution of (5.16) also solves (5.13) in $[0,\delta_0]$ by uniqueness. Therefore, (5.13) has a unique solution in $[0,2\delta_0]$ for any $\epsilon\in(-\epsilon_1,\epsilon_1)$. By ODE fundemental theorem again:
\begin{equation}
\lim\limits_{\epsilon\to0}\parallel m(\cdot,\epsilon)\parallel_{L^{\infty}[0,2\delta_0]}=0.
\end{equation}
Then there exists $\epsilon_2>0$ such that if $\epsilon\in(-\epsilon_2,\epsilon_2)$, we have
\begin{equation}
\parallel m(\cdot,\epsilon)\parallel_{L^{\infty}[0,2\delta_0]}\le\frac{1}{4}.
\end{equation}
Hence $m(2\delta_0,\epsilon)\in[-\frac{1}{4},\frac{1}{4}]$. By Step 2, (5.16) has a unique solution in $[\delta_0,3\delta_0]$ for any $m_2\in[-\frac{1}{4},\frac{1}{4}]$. If we set $m_2=m(2\delta_0,\epsilon)$, then the corresponding solution of (5.16) also solves (5.13) in $[\delta_0,2\delta_0]$ by uniqueness. Therefore, (5.13) has a  unique solution in $[0,3\delta_0]$ for any $\epsilon\in(-\epsilon_2,\epsilon_2)$. The same process can be repeated iteratively, and we can continue this process until we obtain a sequence of intervals $(-\epsilon_{N-1},\epsilon_{N-1})$, $(-\epsilon_{N-2},\epsilon_{N-2})$, ..., $(-\epsilon_1,\epsilon_1)$ such that for each $n=1,2,...,N-1$, equation (5.13) has a unique solution on the interval $[0,n\delta_0]$ for $\epsilon\in(-\epsilon_n,\epsilon_n)$. 
This implies that for $\epsilon\in(0,\epsilon_{N-1})$, the solution of equation (5.13) exists for all $t\in[0,N\delta_0]$. Therefore, we have $A_\epsilon \geq N\delta_0>T$, which completes the proof of Claim 2.

We begin by choosing $l$ as before, which guarantees that $\gamma$ is non-increasing and $\alpha-\gamma \geq \frac{\epsilon}{1+\epsilon}$ (satisfying condition (I)). As a result, we obtain the following differential Harnack inequality in the interval $(0,A_\epsilon+\frac{\ln 3}{-a})$:
\begin{equation}
	\frac{1}{l+1}\frac{|\nabla u|^2}{u^2}-\frac{u_t}{u}+a\ln u\leq(l+1)\cdot\frac{ma}{2(1-e^{-at})}.
\end{equation}
For any fixed $T>0$, there exists $\epsilon_0>0$ such that if $\epsilon\in(0,\epsilon_0)$, we have $A_\epsilon > T$. Hence, equation (5.22) holds on the interval $(0,T]$ for $\epsilon\in(0,\epsilon_0)$. By the ODE fundamental theorem, we can deduce that
\begin{equation}
	\lim_{\epsilon\to0^+}\| m(\cdot,\epsilon)\|_{L^{\infty}[0,T]}=0.
\end{equation}
In the interval $(0,\frac{\ln 3}{-a})$, equation (5.22) becomes
\begin{equation}
	\frac{1}{\epsilon+1}\frac{|\nabla u|^2}{u^2}-\frac{u_t}{u}+a\ln u\leq(\epsilon+1)\cdot\frac{ma}{2(1-e^{-at})}.\nonumber
\end{equation}
Taking the limit as $\epsilon \to 0^+$, we obtain
\begin{equation}
	\frac{|\nabla u|^2}{u^2}-\frac{u_t}{u}+a\ln u\le\frac{ma}{2(1-e^{-at})}.
\end{equation}
In the interval $[\frac{\ln 3}{-a},\frac{\ln 3}{-a}+T]$, equation (5.22) becomes
\begin{equation}
	\frac{1}{m(t+\frac{\ln 3}{a},\epsilon)+1}\frac{|\nabla u|^2}{u^2}-\frac{u_t}{u}+a\ln u\leq(m(t+\frac{\ln 3}{a},\epsilon)+1)\cdot\frac{ma}{2(1-e^{-at})}.\nonumber
\end{equation}
Taking the limit as $\epsilon \to 0^+$, and utilizing (5.23), we have
\begin{equation}
	\frac{|\nabla u|^2}{u^2}-\frac{u_t}{u}+a\ln u\le\frac{ma}{2(1-e^{-at})}.
\end{equation}
Combining equations (5.24) and (5.25), we see that the inequality holds in $(0,\frac{\ln 3}{-a}+T]$. Since $T$ is arbitrary, we conclude that the differential Harnack inequality (2.6) is obtained.

By equation (2.5), we can derive the following equivalent form of equation (5.25):
\begin{equation}
	\Delta_V f+\frac{ma}{2(1-e^{-at})}\ge 0,
\end{equation}
where $f=\ln u$.

Sharpness of (5.26) for any $a\ne 0$:
There exists a family of particular solutions of (2.5) in $\mathbb{R}^n\times\mathbb{R}\setminus\{0\}$, taking the form of 
\begin{equation}
u(x,t)=\exp\Big[-\frac{a\parallel x-x_0\parallel^2}{4(1-e^{-at})}-\frac{n}{2}e^{at}\ln|1-e^{-at}|+Ce^{at}\Big],
\end{equation}
where $x_0\in\mathbb{R}^n$, $C\in\mathbb{R}$ is an arbitrary constant.

Direct computation gives
\begin{equation}
\frac{|\nabla u|^2}{u^2}-\frac{u_t}{u}+a\ln u=-\Delta (\ln u)=\frac{na}{2(1-e^{-at})}.
\end{equation}
Here the first equality is due to equation (2.5). It is easy to see that if $V=0$, we can substitute $m$ into $n$ in (5.26). Hence our differetial Harnack is sharp.

\end{proof}
\noindent\textbf{Remark 5.1}\\
It is unknown the sharp differential Harnark inequality of equation (2.5) under general condition $Ric_V^m\ge-Kg$ such as the sharp form in hyperbolic space.

\begin{proof}[\textbf{Proof of Theorem 2.4}]
	Define the function $s:[t_1,t_2]\rightarrow\mathbb{R}$ as
	\begin{equation}
	s(t)=f(l(t),t),
	\end{equation}
	where $l\in \mathscr{P}$ and $\mathscr{P}$ denotes all smooth paths connecting $x_1$ and $x_2$. By the chain rule and global differential Harnack inequality (2.3), we get
	\begin{equation}
	s'(t)=f_t+\nabla f\cdot\frac{dl}{dt}
	\end{equation}
	and 
	\begin{eqnarray}
	\frac{d}{dt}(e^{-at}s(t))&=&e^{-at}\left(-af+f_t+\nabla f\cdot\frac{dl}{dt}\right)\nonumber\\
	&\ge&e^{-at}\left(\frac{\gamma |\nabla f|^2-\varphi}{\alpha}+\nabla f\cdot\frac{dl}{dt}\right)\nonumber\\
	&\ge&e^{-at}\left(\frac{-\varphi}{\alpha}-\frac{\alpha}{4\gamma}\left|\frac{dl}{dt}\right|^2\right).
	\end{eqnarray}
	Hence (2.7) holds by integrating both sides of (5.31) from $t_1$ to $t_2$.
	
	For (2.8), we need to prove: for $x_1$, $x_2\in \mathbf{M^{n}}$, $0<t_1<t_2$ and $a\ne 0$, 
	\begin{equation}
	\min\{\int_{t_1}^{t_2}e^{-at}\left|\frac{dl}{dt}\right|^2dt: l\in\mathscr{P}\}=\frac{a\cdot d(x_1,x_2)^2}{e^{at_2}-e^{at_1}}.
	\end{equation}
	If we define $\mathscr{P}_L=\{l\in\mathscr{P}:\text{length of }l=L\}$, we can derive 	
	\begin{equation}
	\min\{\int_{t_1}^{t_2}e^{-at}\left|\frac{dl}{dt}\right|^2dt: l\in\mathscr{P}_L\}=\frac{a\cdot L^2}{e^{at_2}-e^{at_1}}.
	\end{equation}	
	First, it is easy to see: if $l\in\mathscr{P}_L$,
	\begin{eqnarray}
	L^2&=&\Big(\int_{t_1}^{t_2}\left|\frac{dl}{dt}\right|dt\Big)^2\nonumber\\
	&\le&\int_{t_1}^{t_2}e^{-at}\left|\frac{dl}{dt}\right|^2dt\cdot\int_{t_1}^{t_2}e^{at}dt\nonumber\\
	&=&\int_{t_1}^{t_2}e^{-at}\left|\frac{dl}{dt}\right|^2dt\cdot\frac{e^{at_2}-e^{at_1}}{a},
	\end{eqnarray}	
	which yields 	
	\begin{equation}
	\min\{\int_{t_1}^{t_2}e^{-at}\left|\frac{dl}{dt}\right|^2dt: l\in\mathscr{P}_L.\}\ge\frac{a\cdot L^2}{e^{at_2}-e^{at_1}}.
	\end{equation}		
	Notice that the second inequality is due to Cauchy inequality, which is equality if and only if $\left|\frac{dl}{dt}\right|=ce^{at}$, where $c=\frac{a\cdot L}{e^{at_2}-e^{at_1}}$ by the definition of $\mathscr{P}_L$. Hence (5.35) is also an equality. By (5.33), one can see that (5.32) holds by taking $L=d(x_1,x_2)$ (due to $\mathbf{M^{n}}$ is complete). In this equality case of (5.32), $l$ is a minimized geodesic from $x_1$ to $x_2$ with a speed given by
	\begin{equation}
	\left|\frac{dl}{dt}\right|=\frac{a\cdot L}{e^{at_2}-e^{at_1}}\cdot e^{at}.
	\end{equation}
		For $a\ne 0$, if we choose the sharp differential Harnack inequality
	\begin{equation}
	\frac{|\nabla u|^2}{u^2}-\frac{u_t}{u}+a\ln u\le\frac{na}{2(1-e^{-at})},
	\end{equation}
 then we can derive the corresponding Harnack inequality from (2.8):
		\begin{eqnarray}
	e^{-at_2}f(x_2,t_2)-e^{-at_1}f(x_1,t_1)&\ge&\int_{t_1}^{t_2}-e^{-at}\cdot \varphi dt-\frac{1}{4}\cdot\frac{a\cdot d(x_1,x_2)^2}{e^{at_2}-e^{at_1}}\nonumber\\
	&=&-\frac{n}{2}\ln\Big(\frac{1-e^{-at_2}}{1-e^{-at_1}}\Big)-\frac{1}{4}\cdot\frac{a\cdot d(x_1,x_2)^2}{e^{at_2}-e^{at_1}}.
	\end{eqnarray}
	If we take the solution $u$ as (5.27) ($\mathbf{M^{n}}$=$\mathbb{R}^n$), for $x_1$, $x_2\in \mathbf{R^{n}}$, $0<t_1<t_2$ and $a\ne 0$, we have
	\begin{eqnarray}
	e^{-at_2}f(x_2,t_2)-e^{-at_1}f(x_1,t_1)
	&=&-\frac{a \parallel x_2-x_0\parallel^2}{4(e^{at_2}-1)}+\frac{a \parallel x_1-x_0\parallel^2}{4(e^{at_2}-1)}-\frac{n}{2}\ln\Big(\frac{1-e^{-at_2}}{1-e^{-at_1}}\Big)\nonumber\\
	&=&-\frac{n}{2}\ln\Big(\frac{1-e^{-at_2}}{1-e^{-at_1}}\Big)-\frac{a\parallel x_1-x_2\parallel^2}{4(e^{at_2}-e^{at_1})},
	\end{eqnarray}
	where $f=\ln u$. The second equality holds by choosing
	\begin{equation}
	x_0=\frac{(e^{at_2}-1)x_1-(e^{at_1 }-1)x_2}{e^{at_2}-e^{at_1}}.
	\end{equation}
	Hence, the Harnack inequality associated with \eqref{sldhk} is also sharp under condition $Ric\ge 0$.
	Then we end the proof of Theorem 2.4.	
\end{proof}

Next, we derive some Liouville type theorems by differential Harnark inequality \eqref{sldhk}.
\begin{proof}[\textbf{Proof of Theorem 2.5}]
	(1) First, for fixed $t_1<0$, we choose an undetermined $T_0>-t_1$.\\
	We define $\tilde{u}(x,t)=u(x,t-T_0)$ on $\mathbf{M^{n}}\times(-\infty,T_0)$.
	 Then $\tilde{u}$ solves the equation (2.5) on $\mathbf{M^{n}}\times(-\infty,T_0)$. By differential Harnack inequality \eqref{sldhk}, we have
	\begin{eqnarray}\label{541}
	\Delta_{V}(\ln u)(x,t_1)&=&\Delta_{V}(\ln \tilde{u})(x,t)\nonumber\\
	&\ge&-\frac{ma}{2(1-e^{-at})}\nonumber\\
	&=&-\frac{ma}{2(1-e^{-at_1-aT_0})},
	\end{eqnarray}	
	where $t:=t_1+T_0$.
	Letting $T_0\to\infty$ yields
	\begin{equation}
	\Delta_{V}(\ln u)(x,t_1)\ge 0.
	\end{equation}
	By weak maximal principle in closed manifold, we see that $\ln u(\cdot,t_1)$ is constant on $\mathbf{M^{n}}$. Then (2.5) becomes an ODE, directly solving it finishes the proof of (1).	\\
	(2) By (1), we immediately get  
	\begin{equation}
	\Delta_{V}u=u\Delta_{V}(\ln u)+\frac{|\nabla u|^2}{u}\ge 0,
	\end{equation}
	and so
	\begin{equation}
	\partial_tf\ge af,
	\end{equation}
	where $f=\ln u$. For $a<0$ and any fixed $x\in\mathbf{M^{n}}$, if $x\in S_3$, there exists $t_0\in(-\infty,0)$ such that $f(x,t_0)<0$. Then we consider the following Cauchy problem:
	\begin{eqnarray}
	\begin{cases}
	\dot{v}=av\\
	v(t_0)=f(x,t_0).
	\end{cases}
	\end{eqnarray}
	We immediately get a unique solution of (5.46) in $(-\infty,0)$ as follows:
	\begin{eqnarray}
	v(t)=f(x,t_0)e^{a(t-t_0)}.
	\end{eqnarray}
	By ODE comparison theorem, we have $f(x,t)\le f(x,t_0)e^{a(t-t_0)}$ in $(-\infty,t_0)$ which yields (2b). If $x\in S_1$, the same argument as above gives (2a). If $u$ has lower bound, then $S_3=\emptyset$, because any point in $S_3$ will decay to 0 as $t\to-\infty$ by (2b). Hence (2c) holds. (2d) is also trivail by (2b) and the same argument as before. If we choose solutions in (5.27), when $t\to-\infty$, we see
	\begin{equation}
	\begin{cases}
	u(x,\cdot)=e^{O(e^{at})}\quad\,\,\,\text{if}\quad C>0\nonumber\\
	u(x,\cdot)=e^{\frac{n}{2}}\quad\qquad\,\text{if}\quad C=0\nonumber\\
	u(x,\cdot)=e^{O(-e^{at})}\quad\text{if}\quad C<0.\\
	\end{cases}
	\end{equation}
	Moreover, at case $C\ge 0$, $u>1$ (i.e. $S_1=\mathbf{M^{n}}$); at case $C<0$, $S_3=\mathbf{M^{n}}$. Hence (2a)-(2d) are sharp on time growth control. \\
	(3) It is easy to see that (2.10) implies (3a) and the bound of superior limit. Therefore we only need to prove estimate (2.10). As the proof of (1), the inequality \eqref{541} holds. By letting $T_0 \to \infty$, we can conclude that $\Delta_V(\ln u) \geq -\frac{ma}{2}$. Therefore, we have $\partial_t f \geq -a\left(\frac{m}{2}+f\right)$, where $f = \ln u$. The remaining process is routine and can be carried out similarly to what is done in (2).\\
	(4) In this case, we directly get the following inequality by sharp differential Harnack for $a>0$ case and equation (2.5):
	\begin{equation}
	\partial_t f\ge\frac{ma}{2(e^{-at}-1)}+af,
	\end{equation}
	where $f=\ln u$. For $a>0$ and any fixed $x\in\mathbf{M^{n}}$, if $x\in Z_1$, there exists $t_n\in(0,\infty)$ and $t_n\to\infty$, $f(x,t_n)\ge\sigma>\frac{m}{2}$. Then we consider a sequence Cauchy problems:
	\begin{eqnarray}\label{548}
	\begin{cases}
	\dot{v}=\frac{ma}{2(e^{-at}-1)}+av\\
	v(t_n)=f(x,t_n).
	\end{cases}
	\end{eqnarray}
	We immediately get the unique solution of \eqref{548} in $(0,\infty)$ as follows.
	\begin{eqnarray}
	v(t)&=&e^{a(t-t_n)}f(x,t_n)+\frac{m}{2}e^{at}\ln\left(\frac{e^{-at_n}-1}{e^{-at}-1}\right)\nonumber\\
	&=&-\frac{m}{2}e^{at}\ln(1-e^{-at})+c(x,t_n)e^{at},
	\end{eqnarray}
	where
	\begin{equation}
	c(x,t_n)=\frac{m}{2}\ln(1-e^{-at_n})+e^{-at_n}f(x,t_n).
	\end{equation}
	If $c(x,t_n)\le 0$ for all $n\in\mathbb{N}$, by ODE comparison theorem, we have 
	\begin{equation}\label{551}
	f(x,t)\le-\frac{m}{2}e^{at}\ln(1-e^{-at})\quad\text{in}\quad(0,t_n).
	\end{equation}
	Letting $t_n\to \infty$, then \eqref{551} holds on $(0,\infty)$ which yields $\limsup\limits_{t\to\infty}f(x,t)\le\frac{m}{2}$. This controdicts with $x\in Z_1$. Hence there exists some $n$ such that $c(x,t_n)>0$. Then by choosing $t_0=t_n$ and $c=c(x,t_n)$, ODE comparison theorem again derives our desired.\\
	If $f\ge \delta>\frac{m}{2}$, as before, for undetermined $t_0\in (0,\infty)$, we have
	\begin{eqnarray}
	f(x,t)&\ge&-\frac{m}{2}e^{at}\ln(1-e^{-at})+c(x,t_0)e^{at}\nonumber\\
	&\ge&-\frac{m}{2}e^{at}\ln(1-e^{-at})+\left(\delta e^{-at_0}+\frac{m}{2}\ln (1-e^{-at_0})\right)\cdot e^{at}\nonumber\\
	&=&-\frac{m}{2}e^{at}\ln(1-e^{-at})+\left(\frac{\delta}{2}-\frac{m}{4}\right) e^{-at_0}e^{at}\nonumber\\
	&&+\Big[\left(\frac{\delta}{2}+\frac{m}{4}\right)e^{-at_0}+\frac{m}{2}\ln (1-e^{-at_0})\Big]\cdot e^{at}\nonumber
	\end{eqnarray}
	in $[t_0,\infty)$.
	Then we choose
	\begin{equation}
	t_0=t(\delta)= \frac{1}{a}\ln\left(\frac{\delta+\frac{m}{2}}{\delta-\frac{m}{2}}\right),
	\end{equation}
	and so
	\begin{equation}
	\left(\frac{\delta}{2}+\frac{m}{4}\right)e^{-at_0}+\frac{m}{2}\ln (1-e^{-at_0})> 0.
	\end{equation}
	Thus we complete the proof of (4a) by setting $c=c(\delta)=(\frac{\delta}{2}-\frac{m}{4}) e^{-at_0}$.
	
	For (4b), we first prove a little stronger claim:
	
	\textbf{Claim:} If $\liminf\limits_{t\to\infty}\ln u(x,t)\le \frac{m}{2}$,  we have $u(x,t)\le e^{-\frac{m}{2}e^{at}\ln(1-e^{-at})}$.
	
	\textbf{Proof of Claim:} By condition we have: for any fixed $\epsilon>0$, there exists $t_n\in(0,\infty)$ such that $t_n\to\infty$, $f(x,t_n)\le\frac{m}{2}+\epsilon$ ($f=\ln u$). Then the same proof as (4a) can go to (5.50) and ODE comparison gives 
	\begin{eqnarray}
	f(x,t)&\le&-\frac{m}{2}e^{at}\ln(1-e^{-at})+c(x,t_n)e^{at}\nonumber\\
	&\le&-\frac{m}{2}e^{at}\ln(1-e^{-at})+\Big(\frac{m}{2}\ln(1-e^{-at_n})+e^{-at_n}(\frac{m}{2}+\epsilon)\Big)e^{at}\nonumber\\
	&\le&-\frac{m}{2}e^{at}\ln(1-e^{-at})+\epsilon e^{a(t-t_n)}\nonumber\\
	&\le&-\frac{m}{2}e^{at}\ln(1-e^{-at})+\epsilon
	\end{eqnarray}
	in $(0,t_n)$. By letting $n\to\infty$ first and then $\epsilon\to 0^+$, we verify the claim. It is easy to see: $\forall x\in Z_2\cup Z_3$, then $\liminf\limits_{t\to\infty}\ln u(x,t)\le \frac{m}{2}$. Hence (4b) is implied by above claim. The remainder of (4b) is trivail by the definitions of $Z_1$,$Z_2$,$Z_3$. As before, if we choose solutions as (5.27), when $t\to\infty$, we have 
	\begin{equation}
	\begin{cases}
	\ln u(x,\cdot)\sim -\frac{a\parallel x-x_0\parallel^2}{4}+\frac{m}{2}+Ce^{at}\quad\text{if}\quad C>0\vspace{5mm}\nonumber\\
	\ln u(x,\cdot)\sim -\frac{a\parallel x-x_0\parallel^2}{4}+\frac{m}{2}\qquad\qquad\,\text{if}\quad C=0\vspace{5mm}\nonumber\\
	\ln u(x,\cdot)\sim -\frac{a\parallel x-x_0\parallel^2}{4}+\frac{m}{2}+Ce^{at}\quad\text{if}\quad C<0.\\
	\end{cases}
	\end{equation}
	Moreover, at case $C> 0$, then $Z_1=\mathbf{M^{n}}$; at case $C=0$, $Z_2=\mathbf{M^{n}}\setminus\{x_0\}$ and $Z_3=\{x_0\}$; at case $C<0$, $Z_2=\mathbf{M^{n}}$. In each case, our estimates in (4a) and (4b) are sharp.\\
	(5) Actually, inequality (2.12) have been proved in the process of the proof of (4) because we do not use the sign of $a$ before ODE comprison theorem, here we just need check the sharpness of (2.12). If we choose solutions as (5.27), we have 
	\begin{equation}
	F(x,t)=C+\frac{a\parallel x-x_0\parallel^2}{4(1-e^{at})},
	\end{equation}
	which is indeed increasing for $x\neq x_0$ and constant for $x=x_0$. The constant case corresponds to the equality of (2.12).\vspace{1mm}\\
	(6) For $a>0$ case, one can directly get the upper bound $e^{\frac{m}{2}}$ from (3b). One also can get it from (4a): if $u(x)=u(x,t)>e^{\frac{m}{2}}$, then $x\in Z_1$ which yields that $u$ blows up at $\infty$. This is impossible because $u$ is static on time.
	
	For $a<0$ case, one can directly get the lower bound $1$ from (5). One also can get it from (2): if $u(x)=u(x,t)<1$, then $x\in S_3$ which yields that $u$ decays to zero at $-\infty$. This is impossible because $u$ is static on time.
\end{proof}

\section{\textbf{Application to Yamabe type equation}}
In the section, we will consider global differential Harnack of Yamabe type equation {\rm(\ref{geq})}. Especially, we have proved sharp differential Harnack, Harnark inequality and Liouville type theorem under the Bakry-\'{E}mery curvature non-negative condition. We can assume $p_i\ne 1$ for some $i$, because that case is linear equation, whose differential Harnack inequalities are directly derived in Appendix 7.1. 

As before, we have
\begin{equation}
h(f):=\frac{\sum_{i=1}^{N}a_i e^{p_i f}}{e^f}=\sum_{i=1}^{N}a_i e^{(p_i-1)f}.
\end{equation}
Hence solving $A_3$-system becomes
\begin{align}
	\begin{cases}
		\frac{4\gamma}{m}c+\sum_{i=1}^{N}[a_i(p_i-1)(\alpha-\gamma)+\alpha a_i(p_i-1)^2]-2K\gamma-\gamma'\ge \frac{\gamma}{\alpha}\Big(\frac{4\gamma}{m}c-\alpha'\Big)\nonumber\\	
		\varphi'-\frac{2\gamma}{m}c^2+\frac{\varphi}{\alpha}\Big(\frac{4\gamma}{m}c-\alpha\cdot \sum_{i=1}^{N}a_i(p_i-1) e^{(p_i-1)f}-\alpha'\Big)\ge 0\nonumber\\
		\sum_{i=1}^{N}a_i(p_i-1) e^{(p_i-1)f}+\frac{\alpha'}{\alpha}+\frac{\beta'}{\beta}-\frac{4\gamma\varphi}{m\alpha^2}\le 0\nonumber\\
		\beta(0)=0\quad\text{and}\quad\beta >0\nonumber
	\end{cases}
\end{align}
on $\mathbf{M^{n}}\times(0,\infty)$ and any one of (I)-(III)
holds.

 In the following description, we assume $K=0$. 
\begin{proof}[\textbf{Proof of Theorem 2.6}]
 Under the conditions in Theorem 2.6, without loss of generality, we can assume $p_1<1$.  We just require  $a_i(p_i-1)(\alpha-\gamma)+(p_i-1)^2\ge 0$ for all $i$. Then above $A_3$-system is implied by the following $A_3$-system of heat equation:
	\begin{align}
	\begin{cases}
	\frac{4\gamma}{m}c-2K\gamma-\gamma'\ge \frac{\gamma}{\alpha}\left(\frac{4\gamma}{m}c-\alpha'\right)\nonumber\\	
	\varphi'-\frac{2\gamma}{m}c^2+\frac{\varphi}{\alpha}\Big(\frac{4\gamma}{m}c-\alpha'\Big)\ge 0\nonumber\\
\frac{\alpha'}{\alpha}+\frac{\beta'}{\beta}-\frac{4\gamma\varphi}{m\alpha^2}\le 0\nonumber\\
	\beta(0)=0\quad\text{and}\quad\beta >0.\nonumber
	\end{cases}
	\end{align}
	We choose (if $p_N\le0$, we choose $\alpha>1$)
	\begin{eqnarray}
	\gamma(t)=1, \qquad \alpha(t)=\alpha\in(1,\alpha_0],\qquad \beta(t)=t,\nonumber\\
	\varphi(t)=\frac{m\alpha^2}{2t}+\frac{m\alpha^2 K}{4(\alpha-1)},\quad c(t)=\frac{m\alpha}{2t}+\frac{m\alpha K}{2(\alpha-1)},
	\end{eqnarray}
	where
	\begin{eqnarray}
		\alpha_0:=
		\begin{cases}
	\frac{1}{p_N}\quad\qquad \text{if}\qquad 0<p_N<1\\
	\frac{1}{p_{N-1}}\,\qquad \text{if}\qquad p_N=1.\nonumber
		\end{cases}
	\end{eqnarray}
	By Theorem 2.2 (satisfies (I)), we have the global estimate:
	\begin{equation}
	\frac{|\nabla u|^2}{u^2}-\alpha
	\frac{u_t}{u}+\sum_{i=1}^{N}a_i u^{p_i-1}\leq\frac{m\alpha^{2}}{2t}+\frac{m\alpha^{2}K}{4(\alpha-1)}.
	\end{equation}
	By letting $\alpha\to 1^{+}$ and $K=0$, we get (2.15).
	
	For compact case, we just choose $\alpha=1$ and other functions are as same as (6.2). Then one can see that they solve $A_2$-system and yield the same differential Harnack inequality as (6.3) (hence \eqref{sgdhk}).	
\end{proof}
Then we immediately obtain the corresponding Harnack inequality.
\begin{proof}[\textbf{Proof of Theorem 2.7}]
	Define the function $s:[t_1,t_2]\rightarrow\mathbb{R}$ as
	\begin{equation}
	s(t)=f(l(t),t),
	\end{equation}
	where $l\in \mathscr{P}$ and $\mathscr{P}$ denotes all smooth paths connecting $x_1$ and $x_2$ ($l(t_i)=x_i,i=1,2$). By the chain rule and global differential Harnack (2.3), we get
	\begin{equation}
	s'(t)=f_t+\nabla f\cdot\frac{dl}{dt}
	\end{equation}
	and 
	\begin{eqnarray}
	\frac{d}{dt}(s(t))&=&f_t+\nabla f\cdot\frac{dl}{dt}\nonumber\\
	&\ge&\frac{\gamma |\nabla f|^2-\varphi}{\alpha}+\nabla f\cdot\frac{dl}{dt}\nonumber\\
	&\ge&\frac{-\varphi}{\alpha}-\frac{\alpha}{4\gamma}\left|\frac{dl}{dt}\right|^2.
	\end{eqnarray}
	Here the first inequality is due to the differential Harnack and $a_i\ge 0$ and $p_N\le 1$.
	Hence (2.16) holds by integrating both sides of (6.6) from $t_1$ to $t_2$. The remainder of Theorem 2.7 is trivial by the fact $\left|\frac{dl}{dt}\right|=\frac{d(x_1,x_2)}{t_1-t_2}$.	
\end{proof}

\begin{proof}[\textbf{Proof of Theorem 2.8}]
	(a) As in the proof of Theorem 2.5, we fix $t_1\in(-\infty,0)$. We then choose $T_0 > -t_1$ and define $\tilde{u}(x,t)=u(x,t-T_0)$ on $\mathbf{M^{n}}\times(-\infty,T_0)$. As a result, $\tilde{u}$ solves the equation on $\mathbf{M^{n}}\times(-\infty,T_0)$. By applying the differential Harnack inequality of the linear heat equation and selecting $t:=t_1+T_0$, we obtain:
	\begin{eqnarray}
		\Delta_{V}(\ln u)(x,t_1)&=&\Delta_{V}(\ln \tilde{u})(x,t)\nonumber\\
		&\ge&-\frac{m}{2(t_1+T_0)}.
	\end{eqnarray}
	By letting $T_0\to\infty$, we have	
	\begin{equation}
	\Delta_{V}(\ln u)(x,t_1)\ge 0,
	\end{equation}
	which yields
	\begin{equation}
	\partial_{t}(\ln u)\ge 0.
	\end{equation}
	Then we get $u(x,\cdot)$ is nondecreasing for any $x\in\mathbf{M^{n}}$.
	
	For $V$-heat equation case, if additional condition (2.19) holds, when $t\le t_0$,
	\begin{eqnarray}
		u(x,t)&\le& u(x,t_0)\nonumber\\
		&\le&e^{o(d(x,x_0))}\qquad\qquad\text{as}\quad d(x,x_0)\to\infty\nonumber\\
		&\le&e^{o(d(x,x_0)+\sqrt{-t})}\qquad\text{as}\quad d(x,x_0)\to\infty,t\to-\infty.
	\end{eqnarray}
	Then Souplet-Zhang's Liouville theorem gives the triviality of solution\footnote{Strictly speaking, we need the similar version of Liouville theorem in Souplet-Zhang \cite{SZ} under $Ric^m_V\ge 0$, one can readily verifies this by checking the proof in \cite{SZ} and using Bakry-Qian's Laplacian comprison theorem when one requires the Laplacian comprison theorem under Ricci curvature condition.}.\\  
	(b) Taking logarithmic of $t^{\frac{m}{2}}u(x,\cdot)$, we find that its differentiation on time is non-negative by Li-Yau's differential Harnack inequality (2.15). This completes the proof of (b).\vspace{1mm}\\
	(c) As before, for fixed $t_1\in(-\infty,0)$, then we choose $T_0>-t_1$ and define $\tilde{u}(x,t)=u(x,t-T_0)$ on $\mathbf{M^{n}}\times(-\infty,T_0)$. Then $\tilde{u}$ solves (2.14) on $\mathbf{M^{n}}\times(-\infty,T_0)$. By differential Harnack of case $p_N\le 1$ and choosing $t =t_1+T_0$, then the same argument as (a) gives:
	\begin{equation}
	\Delta_{V}(\ln u)(x,t_1)\ge 0,
	\end{equation}
	which yields
	\begin{equation}
		\partial_{t}u\ge \sum_{i=1}^{N}a_i u^{p_i}\ge a_1u^{p_1}.
	\end{equation}
	Without loss of generality, we assume $u$  define on $\mathbf{M^n}\times(-\infty,0]$ and there exists $x_0\in\mathbf{M^{n}}$ such that $u(x_0,0)>0$, 
	then we natually consider Cauchy problem:
	\begin{equation}
		\begin{cases}
	        \dot{v}=a_1 v^{p_1}\\
	       v(0)=u(x_0,0).
		\end{cases}
	\end{equation}
	By directly solving (6.13) and applying ODE comparison principle, we have
	\begin{eqnarray}
	u(x_0,t)\le\left[a_1(1-p_1)\Big(t+\frac{u(x_0,0)^{1-p_1}}{a_1(1-p_1)}\Big)\right]^{\frac{1}{1-p_1}}.
	\end{eqnarray}
	Hence $u(x_0,t_0)=0$, where $t_0=-\frac{u(x_0,0)^{1-p_1}}{a_1(1-p_1)}$. 
	Therefore, there does not exist positive solution of {(\ref{geq})} on $\mathbf{M^{n}}\times(-\infty,0)$. Futhermore, strong maximal principle gives $u\equiv 0$ before some time $T\in(-\infty,0)$.
	Then we complete the proof of (c). \\  
(d)\textbf{ Claim:} If $x\in\mathbf{M^{n}}$ such that 
\begin{equation}
\lim_{t\to\infty}\frac{u^{1-p_1}(x,t)}{t} =0,
\end{equation}
then we have
\begin{equation}
\liminf_{t\to\infty}\frac{u(x,t)}{u_0(t)}\ge 1,
\end{equation}
where $u_0$ is defined as in Theorem 2.8.

\textbf{Proof of Claim:} Differential Harnack directly gives:
\begin{equation}
	\partial_{t}u\ge \left(-\frac{mu^{1-p_1}}{2t}+a_1\right) u^{p_1}.
\end{equation}
For fixed $\epsilon\in(0,a_1)$, by (6.15), there exists $t_0$ such that $-\frac{mu^{1-p_1}(x,t)}{2t}+a_1\ge \epsilon$ when $t\ge t_0$. Then ODE comparison principle directly gives 
\begin{equation}
	u(x,t)\ge u_{\epsilon}(t) \qquad\text{for}\qquad t\ge t_0,
\end{equation}
where $u_{\epsilon}$ is solution of the following Cauchy problem:
\begin{eqnarray}
	\begin{cases}
	\dot{v}=\epsilon\cdot  v^{p_1}\nonumber\\
	v(t_0)=u(x,t_0).
	\end{cases}
\end{eqnarray} 
Then we have 
\begin{equation}
	\liminf_{t\to\infty}\frac{u(x,t)}{u_0(t)}=\liminf_{t\to\infty}\frac{u(x,t)}{u_{\epsilon}(t)}\cdot\frac{u_{\epsilon}(t)}{u_{0}(t)}\ge\Big(\frac{\epsilon}{a_1}\Big)^{\frac{1}{1-p_1}}.
\end{equation}
Letting $\epsilon\to a_1^-$, we get (6.16).

It is easy to see that (6.16) controdicts with (6.15) when $p_1<1$. Hence there does not exist positive solution on $\mathbf{M^{n}}\times(0,\infty)$ such that $u(x,t)=o(t^{\frac{1}{1-p_1}})$ for some $x\in\mathbf{M^{n}}$.
\end{proof}

\section{\textbf{Appendix}}
In Appendix, we give some general solutions of $A_3$-system about some specific equations. Some solutions answer some questions asked by previous authors, some solutions are new and improve many results of other authors. 

\subsection{\textbf{Solutions of $A_3$-system related to linear equation}}
In this subsection, we briefly derive some classical estimates for linear heat equation by solving the corresponding $A_3$-system. 

We consider equation:
\begin{equation}
\partial_{t}u=\Delta_{V}u+pu,
\end{equation}
where $p$ is a real constant.
Then 
\begin{equation}
h(f):=\frac{p\cdot e^f}{e^f}=p.
\end{equation}
Hence solving $A_3$-system becomes
\begin{align}
	\begin{cases}
		\frac{4\gamma}{m}c-2K\gamma-\gamma'\ge \frac{\gamma}{\alpha}\Big(\frac{4\gamma}{m}c-\alpha'\Big)\nonumber\\	
		\varphi'-\frac{2\gamma}{m}c^2+\frac{\varphi}{\alpha}\Big(\frac{4\gamma}{m}c-\alpha'\Big)\ge 0\nonumber\\
		\frac{\alpha'}{\alpha}+\frac{\beta'}{\beta}-\frac{4\gamma\varphi}{m\alpha^2}\le 0\nonumber\\
		\beta(0)=0\quad\text{and}\quad\beta >0\nonumber
	\end{cases}
\end{align}
and any one of (I)-(III) holds.

Then we give some classical solutions of $A_3$-system:\\
(i) Li-Yau and Davies type:\\We choose

\begin{eqnarray}
\gamma(t)=1, \qquad \alpha(t)=\alpha>1,\qquad \beta(t)=t,\nonumber\\
\varphi(t)=\frac{m\alpha^2}{2t}+\frac{m\alpha^2 K}{4(\alpha-1)},\quad c(t)=\frac{m\alpha}{2t}+\frac{m\alpha K}{2(\alpha-1)}.
\end{eqnarray}
By Theorem 2.2 (satisfies (I)), we have
\begin{equation}
\frac{|\nabla u|^2}{u^2}-\alpha
\frac{u_t}{u}+\alpha p\leq\frac{m\alpha^{2}}{2t}+\frac{m\alpha^{2}K}{4(\alpha-1)}.
\end{equation}
If $V=\mathbf{0}$ and $p=0$, letting $m\to n^+$, we obtain the original Davies's estimate. \\
(ii) Li-Xu type:\\We choose

\begin{eqnarray}
\gamma(t)=1, \quad \alpha(t)=1+\frac{\sinh(Kt)\cosh(Kt)-1}{\sinh^2(Kt)},\quad \beta(t)=\tanh(Kt),\nonumber\\
\varphi(t)=\frac{mK}{2}[\coth(Kt)+1],\qquad c(t)=\frac{mK}{2}[\coth(Kt)+1].
\end{eqnarray}
By Theorem 2.2 (satisfies (II) and (III)), we have 
\begin{equation}
\frac{|\nabla u|^2}{u^2}-\alpha(t)\frac{u_t}{u}+\alpha(t)p\leq\frac{mK}{2}[\coth(Kt)+1].
\end{equation}
If $V=\mathbf{0}$ and $p=0$, letting $m\to n^+$, we obtain the original Li-Xu's estimate. \\
(iii) Linear Li-Xu type:\\We choose

\begin{eqnarray}
\gamma(t)=1, \qquad \alpha(t)=1+\frac{2}{3}Kt,\qquad \beta(t)=\tanh(Kt),\nonumber\\
\varphi(t)=\frac{m}{2}\Big(\frac{1}{t}+K+\frac{1}{3}K^2t\Big),\quad c(t)=\frac{m}{2}(\frac{1}{t}+K).
\end{eqnarray}
By Theorem 2.2 (satisfies (II) and (III)), we have 
\begin{equation}
\frac{|\nabla u|^2}{u^2}-\Big(1+\frac{2}{3}Kt\Big)
\frac{u_t}{u}+\Big(1+\frac{2}{3}Kt\Big)p\leq\frac{m}{2}(\frac{1}{t}+K).
\end{equation}
If $V=\mathbf{0}$ and $p=0$, letting $m\to n^+$, we obtain the original linear Li-Xu's estimate. \\
(iv) Hamilton type:\\We choose

\begin{eqnarray}
\gamma(t)=\delta\cdot e^{-2Kt}, \qquad \alpha(t)=1,\qquad \beta(t)=t,\nonumber\\
\varphi(t)=\frac{m\cdot e^{2Kt}}{2\delta t},\qquad c(t)=\frac{m\cdot e^{2Kt}}{2\delta t},
\end{eqnarray}
where $\delta\in(0,1)$. By Theorem 2.2 (satisfies (I)), we have
\begin{equation}
\delta\cdot e^{-2Kt}\frac{|\nabla u|^2}{u^2}-\frac{u_t}{u}+p\leq\frac{m\cdot e^{2Kt}}{2\delta t}.
\end{equation}
If $V=\mathbf{0}$ and $p=0$, letting $m\to n^+$ and $\delta\to 1^-$, we get 
\begin{equation}
e^{-2Kt}\frac{|\nabla u|^2}{u^2}-\frac{u_t}{u}\leq\frac{n\cdot e^{2Kt}}{2t}.
\end{equation}
When $\mathbf{M^n}$ is a closed manifold, (7.11) recovers the original Hamilton estimate.\\
\textbf{Remark 7.1}\\
(a) While we have presented the above estimates, which have also been derived by other authors, it is worth noting that one can derive other types of differential Harnack inequalities by solving the $A_3$-system. These estimates can then be used to obtain corresponding bounds for the heat kernel and Green function.\\
(b) We acknowledge that the Davies type estimate (7.4) and the Hamilton type estimate (complete manifold case) (7.11) were derived in \cite{D} and \cite{HM} respectively. However, our proof here differs from theirs. They both utilized Davies' technique to analyze the quadratic inequality of the auxiliary function (also see \cite{HHL,HM,M,Y1}). In contrast, we do not employ this technique in the proof of Theorem 2.2, and our proof is more straightforward.\\
(c) Strictly speaking, for the Li-Xu and linear Li-Xu type estimates, we require $K>0$ in order to solve our $A_3$-system. However, one can obtain the sharp Li-Yau estimate for the case of Ricci non-negativity by letting $K\to 0^+$ in any of the cases.

\subsection{\textbf{Solutions of $A_3$-system related to logarithmic type equation}}
In Section 5, we have given the $A_3$-system of logarithmic type equation. Now, we directly give our solutions of above system.\\
(i) Li-Yau and Davies type \\
Case1: $a\ge0$.\\
1.If $K\le\frac{3}{2}a(\alpha-1)$ and for $\alpha>1$, we choose
\begin{eqnarray}
\gamma(t)=1, \qquad \alpha(t)=\alpha>1,\qquad \beta(t)=t,\nonumber\\
\varphi(t)=\frac{m\alpha^2}{2t}+\frac{ma\alpha^2 }{2},\quad c(t)=\frac{m\alpha}{2t}+\frac{ma\alpha}{2}.
\end{eqnarray}
By Theorem 2.2 (satisfies (I)), we have
\begin{equation}
\frac{|\nabla u|^2}{u^2}-\alpha
\frac{u_t}{u}+\alpha a\ln u\le\frac{m\alpha^2}{2t}+\frac{ma\alpha^2 }{2}.
\end{equation}
2.If $K\ge\frac{3}{2}a(\alpha-1)$ and for $\alpha>1$, we choose
\begin{eqnarray}
\gamma(t)=1, \qquad \alpha(t)=\alpha>1,\qquad \beta(t)=t,\nonumber\\
\varphi(t)=\frac{m\alpha^2}{2t}+\frac{m\alpha^2 }{2}\Big(\frac{K}{2(\alpha-1)}+\frac{a}{4}\Big),\nonumber\\ c(t)=\frac{m\alpha}{2t}+m\alpha\Big(\frac{K}{2(\alpha-1)}-\frac{a}{4}\Big).
\end{eqnarray}
By Theorem 2.2 (satisfies (I)), we have
\begin{equation}
\frac{|\nabla u|^2}{u^2}-\alpha
\frac{u_t}{u}+\alpha a\ln u\le\frac{m\alpha^2}{2t}+\frac{m\alpha^2 }{2}\Big(\frac{K}{2(\alpha-1)}+\frac{a}{4}\Big).
\end{equation}
This estimate is new and is better than previous work (see  \cite{HHL,Y1}).\\ 
Case2: $a\le 0$.\\We choose
\begin{eqnarray}
\gamma(t)=1, \qquad \alpha(t)=\alpha>1,\qquad \beta(t)=t,\nonumber\\
\varphi(t)=\frac{m\alpha^2}{2t}+\frac{m\alpha^2 }{2}\Big(\frac{K}{2(\alpha-1)}-\frac{a}{4}\Big),\nonumber\\ c(t)=\frac{m\alpha}{2t}+m\alpha\Big(\frac{K}{2(\alpha-1)}-\frac{a}{4}\Big).
\end{eqnarray}
By Theorem 2.2 (satisfies (I)), we have
\begin{equation}
\frac{|\nabla u|^2}{u^2}-\alpha
\frac{u_t}{u}+\alpha a\ln u\le\frac{m\alpha^2}{2t}+\frac{m\alpha^2 }{2}\Big(\frac{K}{2(\alpha-1)}-\frac{a}{4}\Big).
\end{equation}
Here, we should mention the process of finding these functions. By observing classical Li-Yau's estimate, it is reasonable to guess $\varphi$ and $c$ have the following form:
\begin{eqnarray}
\varphi=\frac{l_1}{t}+l_2,\qquad c(t)=\frac{l_3}{t}+l_4.\nonumber
\end{eqnarray}
Then we plug above form to the $A_3$-system and solve all possible solutions. We omit these cumbersome computations for shortening the length of paper.\\
(ii) Li-Xu type\\
Case1: $a\ge 0$.\\We choose
\begin{eqnarray}
\gamma(t)=1, \quad  \beta(t)=\tanh((K+\frac{a}{2})t),\nonumber\\
\alpha(t)=\frac{a+2K}{a+K}\cdot\frac{e^{(2K+a)t}-1+(a+2K)\frac{e^{-at}-1}{a}}{e^{(2K+a)t}+e^{-(2K+a)t}-2},\nonumber\\
c(t)=\varphi(t)=\frac{m}{2}(K+\frac{a}{2})[\coth((K+\frac{a}{2})t)+1].
\end{eqnarray}
By Theorem 2.2 (satisfies (II) and (III)), we have
\begin{eqnarray}
&&\frac{|\nabla u|^2}{u^2}-\alpha(t)\frac{u_t}{u}+ \alpha(t)a\ln u\le \varphi(t).
\end{eqnarray}
Then we recover (1) of \cite[Theorem 1.2]{HHL}. One also can select the following solution:
\begin{eqnarray}
\gamma(t)=1, \quad \alpha(t)=1+\frac{2K}{3}\frac{1-e^{-at}}{a}, \nonumber\\
\beta(t)=\tanh((K+\frac{a}{2})t),\quad c(t)=\frac{m}{2}\Big(\frac{a}{1-e^{-at}}+K+\frac{a}{2}\Big),\nonumber\\
\varphi(t)=\frac{m}{2}\Big(\frac{(K+\frac{3a}{2})^2}{2a}\frac{e^{at}+1}{e^{at}-1}-\frac{2(K+\frac{a}{2})(K+\frac{3a}{2})}{a}\frac{1}{e^{at}-1}+\frac{(K+\frac{a}{2})^2 t}{(e^{at}-1)^2}\Big).
\end{eqnarray}
This solution recovers \cite[Theorem 1.3]{HHL}. \\
Case2: $a\le 0$.\\We choose (satisfies (II) and (III)):
\begin{eqnarray}
\gamma(t)=1, \quad  \beta(t)=\tanh((K-\frac{a}{2})t),\nonumber\\
\alpha(t)=1+\frac{\sinh(K-\frac{a}{2})t\cosh(K-\frac{a}{2})t-(K-\frac{a}{2})t}{\sinh^2(K-\frac{a}{2})t},\nonumber\\
c(t)=\varphi(t)=\frac{m}{2}(K-\frac{a}{2})[\coth((K-\frac{a}{2})t)+1].
\end{eqnarray}
The solution is new and answers the question in \cite[Remark1.6]{HHL}.\\
If $-K\le a<0$, we also can choose:
\begin{eqnarray}
\gamma(t)=1, \quad  \beta(t)=\tanh((K-\frac{a}{2})t).\nonumber\\
\alpha(t)=\frac{2K-a}{K}\cdot\frac{e^{(2K-a)t}-1+(2K-a)\frac{e^{-at}-1}{a}}{e^{(2K-a)t}+e^{-(2K-a)t}-2},\nonumber\\
c(t)=\varphi(t)=\frac{m}{2}(K-\frac{a}{2})[\coth((K-\frac{a}{2})t)+1].
\end{eqnarray}
This recovers (2) of \cite[Theorem 1.2]{HHL}.\\
Because of rough computation, we give the motivation of solution (7.18) and (7.21).\\
For $a>0$ and $\gamma(t)\equiv 1$, the original $A_3$-system is implied by the following system:
\begin{align}
	\begin{cases}
		\frac{4}{m}c-2K\ge \frac{1}{\alpha}\left(\frac{4}{m}c-\alpha'\right)\nonumber\\	
		\varphi'-\frac{2}{m}c^2+\frac{\varphi}{\alpha}\left(\frac{4}{m}c-a\alpha-\alpha'\right)\ge 0\nonumber\\
		a+\frac{\alpha'}{\alpha}+\frac{\beta'}{\beta}-\frac{4\varphi}{m\alpha^2}\le 0\nonumber\\
		\beta(0)=0\quad\text{and}\quad\beta >0.\nonumber
	\end{cases}
\end{align}
Actually, we find that $\alpha$, $\varphi$, $c$ in (7.18) solve the following system (This is very similar to Li-Xu's system which drops item $-a\alpha$ in the second inequality and $a$ in third inequality, see \cite{LX}):
\begin{align}
	\begin{cases}
		\frac{4}{m}c-2K= \frac{1}{\alpha}\Big(\frac{4}{m}c-\alpha'\Big)\\	
		\varphi'-\frac{2}{m}c^2+\frac{\varphi}{\alpha}\Big(\frac{4}{m}c-a\alpha-\alpha'\Big)=0.
	\end{cases}
\end{align}
Then by a mathematical analysis argument, one finds that the third inequality also holds for functions in (7.21).

For $a<0$ and $\gamma(t)\equiv 1$, the original $A_3$-system is implied by the following system:
\begin{align}
	\begin{cases}
		\frac{4}{m}c+a-2K\ge \frac{1}{\alpha}\left(\frac{4}{m}c-\alpha'\right)\\	
		\varphi'-\frac{2}{m}c^2+\frac{\varphi}{\alpha}\left(\frac{4}{m}c-\alpha'\right)\ge 0\\
		\frac{\alpha'}{\alpha}+\frac{\beta'}{\beta}-\frac{4\varphi}{m\alpha^2}\le 0\\
		\beta(0)=0\quad\text{and}\quad\beta >0\\
		1\le \alpha(t)\le 2.
	\end{cases}
\end{align}
As before, we observe the following system which organized by the first and the second equations in $A_3$-system:
\begin{align}
	\begin{cases}
		\frac{4}{m}c-2(K-\frac{a}{2})= \frac{1}{\alpha}\left(\frac{4}{m}c-\alpha'\right)\\	
		\varphi'-\frac{2}{m}c^2+\frac{\varphi}{\alpha}\left(\frac{4}{m}c-\alpha'\right)=0.
	\end{cases}
\end{align}
Actually, this is Li-Xu's system (See \cite{LX}) with $K-\frac{a}{2}$ substituting $K$, hence direct solutions as heat equation case are given in (7.21). The third inequality holds by the same argument in \cite{LX}. Same reasons derive (7.20) and (7.22). We omit these computations for shortening the length of paper.\\
(iii) Linear Li-Xu type\\
Case1: $a\ge0$.\\
We choose (satisfies (II) and (III)):
\begin{eqnarray}
\gamma(t)=1, \qquad \alpha(t)=1+\frac{2}{3}Kt,\qquad \beta(t)=\tanh(Kt),\nonumber\\
\varphi(t)=\frac{m}{2}\Big(\frac{1}{t}+K+\frac{1}{3}K^2t\Big)+\frac{ma}{16}(at+6)(1+\frac{2Kt}{3})^2,\nonumber\\ c(t)=\frac{m}{2}(\frac{1}{t}+K)+\frac{ma}{4}(1+\frac{2Kt}{3}).
\end{eqnarray}
Case2: $a\le 0$.\\
We choose (satisfies (II) and (III)):
\begin{eqnarray}
\gamma(t)=1, \qquad \alpha(t)=1+\frac{2}{3}Kt,\qquad \beta(t)=\tanh(Kt),\nonumber\\
\varphi(t)=\frac{m}{2}\Big(\frac{1}{t}+K+\frac{1}{3}K^2t\Big)-\frac{ma}{16}(1+\frac{2Kt}{3})^2,\nonumber\\ c(t)=\frac{m}{2}(\frac{1}{t}+K)-\frac{ma}{4}(1+\frac{2Kt}{3}).
\end{eqnarray}
These estimates are new here. \\
(iv)Hamilton type\\
Case1:$a\ge 0$.\\We choose
\begin{eqnarray}
\alpha(t)=1,\qquad\gamma(t)=\delta\cdot e^{-2Kt},\qquad  \beta(t)=t,\nonumber\\
\varphi(t)=\frac{m\cdot e^{2Kt}}{2\delta }(\frac{1}{t}+a),\qquad c(t)=\frac{m\cdot e^{2Kt}}{2\delta }(\frac{1}{t}+a),
\end{eqnarray}
where $\delta\in(0,1)$. By Theorem 2.2 (satisfies (I)), we have 
\begin{equation}
\delta\cdot e^{-2Kt}\frac{|\nabla u|^2}{u^2}-\frac{u_t}{u}+a\ln u\leq\frac{m\cdot e^{2Kt}}{2\delta }(\frac{1}{t}+a).
\end{equation}
 Letting $\delta\to 1^-$, we get 
\begin{equation}
e^{-2Kt}\frac{|\nabla u|^2}{u^2}-\frac{u_t}{u}+a\ln u\le\frac{m\cdot e^{2Kt}}{2 }(\frac{1}{t}+a).
\end{equation}
If $V=-\nabla f$, under the assumption $Ric^m_f\ge-K$, this recovers \cite[Corollary 1.3]{HM}.\\
Case2:$a\le 0$.\\ We choose
\begin{eqnarray}
\alpha(t)=1,\qquad\gamma(t)=\delta\cdot e^{-2Kt},\quad  \beta(t)=t,\nonumber\\
\varphi(t)=\frac{m\cdot e^{2Kt}}{2\delta }(\frac{1}{t}-\frac{a}{8}),\qquad c(t)=\frac{m\cdot e^{2Kt}}{2\delta }(\frac{1}{t}-\frac{a}{2}),
\end{eqnarray}
where $\delta\in(0,1)$. By Theorem 2.2 (satisfies (I)), we have
\begin{equation}
\delta\cdot e^{-2Kt}\frac{|\nabla u|^2}{u^2}-\frac{u_t}{u}+a\ln u\leq\frac{m\cdot e^{2Kt}}{2\delta }(\frac{1}{t}-\frac{a}{8}).
\end{equation}
Letting $\delta\to 1^-$, we get 
\begin{equation}
e^{-2Kt}\frac{|\nabla u|^2}{u^2}-\frac{u_t}{u}+a\ln u\le\frac{m\cdot e^{2Kt}}{2 }(\frac{1}{t}-\frac{a}{8}).
\end{equation}
If $V=-\nabla f$, under the assumption $Ric^m_f\ge-K$, this improves the result of \cite[Corollary 1.3]{HM} whose right hand of (7.38) is $\frac{m\cdot e^{2Kt}}{2 }(\frac{1}{t}-\frac{a}{2})$.\\
(v)An extra type\\
Case1: $a\ge 0$.\\We choose
\begin{eqnarray}
\gamma(t)=1, \quad  \beta(t)=\tanh(Kt),\quad
\alpha(t)=1+\frac{\sinh(Kt)\cosh(Kt)-Kt}{\sinh^2 (Kt)},\nonumber\\
\varphi(t)=\frac{m}{2}(K+a)e^{at}[\coth(Kt)+1],\nonumber\\ c(t)=\frac{m}{2}\sqrt{(K+a)K}\cdot e^{at}[\coth(Kt)+1].
\end{eqnarray}
Case2: $a\le 0$.\\We choose
\begin{eqnarray}
\gamma(t)=1, \quad  \beta(t)=\tanh(Kt),\quad
\alpha(t)=1+\frac{\sinh(Kt)\cosh(Kt)-Kt}{\sinh^2 (Kt)},\nonumber\\
\varphi(t)=\frac{mK}{2}[\coth(Kt)+1]-\frac{ma}{16}\Big(1+\frac{\sinh(Kt)\cosh(Kt)-Kt}{\sinh^2 (Kt)}\Big)^2,\nonumber\\ c(t)=\frac{mK}{2}[\coth(Kt)+1]-\frac{ma}{4}\Big(1+\frac{\sinh(Kt)\cosh(Kt)-Kt}{\sinh^2 (Kt)}\Big).
\end{eqnarray}
By Theorem 2.2 (satisfies (II) and (III)), we  obtain the corresponding differential Harnack inequality. This is a new estimate and we omit its computations.\\

\subsection{\textbf{Solutions of $A_3$-system related to Yamabe type equation}} In Section 6, we have given $A_3$-system of Yamabe type equation. Now, we directly give our solutions of above system. In many cases, we require the upper bound of solution to derive  solutions of $A_3$-system.
Define:
\begin{equation}
M=\sup_{y\in\mathbf{M^{n}}}|b(p-1)|u^{p-1}(y).
\end{equation}
In the following estimates, if $M$ occurs in the formulas, it means that $M$ is finite.\\
\textbf{Case1}: $b(p-1)> 0$.\\
Case 1.1: $b>0, p>1$.\\
(i) Li-Yau type \\
We choose (satisfies (I)):
\begin{eqnarray}
\gamma(t)=1, \qquad \alpha(t)=\alpha>1,\qquad \beta(t)=t,\nonumber\\
\varphi(t)=\frac{m\alpha^2}{2t}+\frac{mM\alpha^2 }{2}+\frac{\alpha^2 mK}{2(\alpha-1)},\nonumber\\
c(t)=\frac{m\alpha}{2t}+\frac{mM\alpha}{2}+\frac{\alpha mK}{2(\alpha-1)}.
\end{eqnarray}
(ii) Li-Xu type
\\We choose (satisfies (II) and (III)):
\begin{eqnarray}
\gamma(t)=1, \quad  \beta(t)=\tanh((K+\frac{M}{2})t),\nonumber\\
\alpha(t)=\frac{M+2K}{M+K}\cdot\frac{e^{(2K+M)t}-1+(M+2K)\frac{e^{-Mt}-1}{M}}{e^{(2K+M)t}+e^{-(2K+M)t}-2},\nonumber\\
c(t)=\varphi(t)=\frac{m}{2}(K+\frac{M}{2})[\coth((K+\frac{M}{2})t)+1].
\end{eqnarray}
(iii)  Linear Li-Xu type\\
We choose (satisfies (II) and (III)):
\begin{eqnarray}
\gamma(t)=1, \qquad \alpha(t)=1+\frac{2}{3}Kt,\qquad \beta(t)=\tanh(Kt),\nonumber\\
\varphi(t)=\frac{m}{2}\Big(\frac{1}{t}+K+\frac{1}{3}K^2t\Big)+\frac{mM}{16}(Mt+6)(1+\frac{2Kt}{3})^2,\nonumber\\ c(t)=\frac{m}{2}(\frac{1}{t}+K)+\frac{mM}{4}(1+\frac{2Kt}{3}).
\end{eqnarray}
(iv)Hamilton type\\
We choose (satisfies (I)):
\begin{eqnarray}
\alpha(t)=1,\qquad\gamma(t)=\delta\cdot e^{-2Kt},\quad  \beta(t)=t,\nonumber\\
\varphi(t)=\frac{m\cdot e^{2Kt}}{2\delta }(\frac{1}{t}+M),\qquad c(t)=\frac{m\cdot e^{2Kt}}{2\delta }(\frac{1}{t}+M),
\end{eqnarray}
where $\delta\in(0,1)$. By Theorem 2.2 we have 
\begin{equation}
\delta\cdot e^{-2Kt}\frac{|\nabla u|^2}{u^2}-\frac{u_t}{u}+a+bu^{p-1}\leq\frac{m\cdot e^{2Kt}}{2\delta }(\frac{1}{t}+M).
\end{equation}
By letting $\delta\to 1^-$, we have
\begin{equation}
e^{-2Kt}\frac{|\nabla u|^2}{u^2}-\frac{u_t}{u}+a+bu^{p-1}\leq\frac{m\cdot e^{2Kt}}{2}(\frac{1}{t}+M).
\end{equation}
Case 1.2: $b<0, p\in(0,1)$.\\
In this case, for Li-Yau, Li-Xu and linear Li-Xu type estimates, we require $\gamma(t)=1$. If we have $\alpha\ge\frac{1}{p}$, $A_3$-system here is implied by the $A_3$-system of heat equation.
With the following simple observation, we will directly give solutions of $A_3$-system.
If $\alpha$, $\beta$, $\varphi$, $c$ solve the system
\begin{align}
\begin{cases}
\frac{4}{m}c-2K\ge \frac{1}{\alpha}\Big(\frac{4}{m}c-\alpha'\Big)\nonumber\\	
\varphi'-\frac{2}{m}c^2+\frac{\varphi}{\alpha}\Big(\frac{4}{m}c-\alpha M-\alpha'\Big)\ge 0\nonumber\\
M+\frac{\alpha'}{\alpha}+\frac{\beta'}{\beta}-\frac{4\varphi}{m\alpha^2}\le 0\nonumber\\
\beta(0)=0\quad\text{and}\quad\beta >0,\nonumber
\end{cases}
\end{align}
for any $k\ge 1$, $k\alpha$, $\beta$, $k^2\varphi$, $kc$ also solve the same system.

By Case 1.1, we have got a solution of above system, so if we set 
\begin{eqnarray}
\tilde{\alpha}=k\alpha,\qquad\tilde{\beta}=\beta,\nonumber\\
\tilde{\varphi}=k^2\varphi, \qquad \tilde{c}=k c,\nonumber
\end{eqnarray}
for $k\ge\frac{1}{p}$, where $\alpha$, $\beta$, $\varphi$, $c$ are functions in Case 1.1, which satisfy Li-Yau, Li-Xu and linear Li-Xu estimates, respectively. Then $\tilde{\alpha}$, $\tilde{\beta}$, $\tilde{\varphi}$, $\tilde{c}$ solve the origin $A_3$-system and consequently get Li-Yau, Li-Xu and linear Li-Xu type estimates in this case.

For Hamilton type estimate, we first choose $\alpha(t)\equiv 1$ and $\gamma(t)=\delta e^{-2Kt}$. 
Then we have two choices as follows:\\
(i) If $\delta\le p$, the same functions as (7.40) solve above system. And we get
\begin{equation}
\delta\cdot e^{-2Kt}\frac{|\nabla u|^2}{u^2}-\frac{u_t}{u}+a+bu^{p-1}\leq\frac{m\cdot e^{2Kt}}{2\delta }(\frac{1}{t}+M).
\end{equation}
(ii)
If $\delta\in(0,\frac{2}{3}]$, then we also choose the same functions as (7.40) and derive the same estimate as (7.43).
If $\delta\in(\frac{2}{3},1)$, we set
\begin{equation}
k=\frac{\delta}{2(1-\delta)}>1,
\end{equation}
then we choose
\begin{eqnarray}
\alpha(t)=1,\qquad\gamma(t)=\delta\cdot e^{-2Kt},\qquad  \beta(t)=t,\nonumber\\
\varphi(t)=\frac{k^2 m\cdot e^{2Kt}}{2\delta }(\frac{1}{t}+M),\qquad c(t)=\frac{km\cdot e^{2Kt}}{2\delta }(\frac{1}{t}+M).
\end{eqnarray}
Direct computation confirms that (7.45) solves the $A_3$-system indeed. \\
Case 1.3: $b<0, p\in(-\infty,0)$.\\ 
In this case, for Li-Yau, Li-Xu and linear Li-Xu type estimates, we require $\gamma(t)=1$. Then the original $A_3$-system is implied by the following system
\begin{align}
\begin{cases}
\frac{4}{m}c-[2|p|+1]M-2K\ge \frac{1}{\alpha}\Big(\frac{4}{m}c-\alpha'\Big)\nonumber\\	
\varphi'-\frac{2}{m}c^2+\frac{\varphi}{\alpha}\Big(\frac{4}{m}c-\alpha M-\alpha'\Big)\ge 0\nonumber\\
M+\frac{\alpha'}{\alpha}+\frac{\beta'}{\beta}-\frac{4\varphi}{m\alpha^2}\le 0\nonumber\\
\beta(0)=0\quad\text{and}\quad\beta >0\nonumber\\
1\le \alpha(t)\le 2.\nonumber
\end{cases}
\end{align}
If we set
\begin{equation}
\tilde{K}=K+(|p|+\frac{1}{2})M,
\end{equation}
then we get:\\
(i) Li-Yau type \\
As before, we choose (satisfies (I)):
\begin{eqnarray}
\qquad \alpha(t)=\alpha\in(1,2),\qquad \beta(t)=t,\nonumber\\
\varphi(t)=\frac{m\alpha^2}{2t}+\frac{mM\alpha^2 }{2}+\frac{\alpha^2 m\tilde{K}}{2(\alpha-1)},\nonumber\\
c(t)=\frac{m\alpha}{2t}+\frac{mM\alpha}{2}+\frac{\alpha m\tilde{K}}{2(\alpha-1)}.
\end{eqnarray}
(ii)Li-Xu type
\\We choose (satisfies (II)):
\begin{eqnarray}
\gamma(t)=1, \quad  \beta(t)=\tanh((\tilde{K}+\frac{a}{2})t),\nonumber\\
\alpha(t)=\frac{a+2\tilde{K}}{a+\tilde{K}}\cdot\frac{e^{(2\tilde{K}+a)t}-1+(a+2\tilde{K})\frac{e^{-at}-1}{a}}{e^{(2\tilde{K}+a)t}+e^{-(2\tilde{K}+a)t}-2},\nonumber\\
c(t)=\varphi(t)=\frac{m}{2}(\tilde{K}+\frac{a}{2})[\coth((\tilde{K}+\frac{a}{2})t)+1].
\end{eqnarray}
(iii)Linear Li-Xu type\\
For this case, we cannot restrict the upper bound of $\alpha(t)$ and must solve original $A_3$-system, direct computation  gives a solution (satisfies (I)):
\begin{eqnarray}
\qquad \alpha(t)=(2-p)(1+\frac{2}{3}Kt),\qquad \beta(t)=\tanh(Kt),\nonumber\\
\gamma(t)=1,\quad c(t)=\frac{m(2-p)}{2}(\frac{1}{t}+K)+\frac{(2-p)mM}{4}(1+\frac{2Kt}{3}),\nonumber\\
\varphi(t)=\frac{m(2-p)^2}{2}\Big(\frac{1}{t}+K+\frac{1}{3}K^2t\Big)+\frac{(2-p)^2mM}{16}(Mt+6)(1+\frac{2Kt}{3})^2.
\end{eqnarray}
(iv)Hamilton type\\
(i) If $p\le -2$, we define $k:=\frac{\delta-p}{2(1-\delta)}\ge 1$.
We choose (satisfies (I)):
\begin{eqnarray}
\alpha(t)=1,\quad\gamma(t)=\delta\cdot e^{-2Kt},\quad  \beta(t)=t,\nonumber\\
\varphi(t)=\frac{k^2m\cdot e^{2Kt}}{2\delta }(\frac{1}{t}+M),\qquad c(t)=\frac{km\cdot e^{2Kt}}{2\delta }(\frac{1}{t}+M),
\end{eqnarray}
where $\delta\in(0,1)$. \\
(ii) If $p\in (-2,0)$, we choose (satisfies (I)):
\begin{eqnarray}
\alpha(t)=1,\quad\gamma(t)=\delta\cdot e^{-2Kt},\quad  \beta(t)=t,\nonumber\\
\varphi(t)=\frac{m\cdot e^{2Kt}}{2\delta }(\frac{1}{t}+M),\qquad c(t)=\frac{m\cdot e^{2Kt}}{2\delta }(\frac{1}{t}+M).
\end{eqnarray}
If $\delta\in(\frac{2+p}{3},1)$, we choose the same functions as in (i) and the same estimate holds.\\
\textbf{Case2}: $b(p-1)< 0$.\\
Case 2.1: $b>0$, $p\in(-\infty,0)$.\\
We notice that the $A_3$-system corresponding to heat equation implies the $A_3$-system of Yamabe type equation. Then all type estimates of heat equation hold for this case. Concretely, we have
\begin{equation}
\gamma(t)\frac{|\nabla u|^2}{u^2}-\alpha(t)\frac{u_t}{u}+\alpha(t)qu^{p-1}\le \varphi(t),
\end{equation}
where $\alpha$, $\gamma$, $\varphi$ are the same as in Subsection 7.1.\\
Case 2.2: $b>0$, $p\in(0,1)$.\\
(i) Li-Yau type\\
 We choose (satisfies (I)):
\begin{eqnarray}
\gamma(t)=1, \qquad \alpha(t)=\alpha\in(1,\frac{1}{p}],\qquad \beta(t)=t,\nonumber\\
\varphi(t)=\frac{m\alpha^2}{2t}+\frac{m\alpha^2 K}{4(\alpha-1)},\quad c(t)=\frac{m\alpha}{2t}+\frac{m\alpha K}{2(\alpha-1)}.
\end{eqnarray}
(ii) Li-Xu type \\
Case1: $p\in(0,\frac{1}{2}]$, then original functions of Li-Xu type of heat equation satisfy.\\
Case2: $p\in(\frac{1}{2},1)$. Let $t(p)\in(0,\infty)$ be the unique zero of $\alpha(t)-\frac{1}{p}$, here $\alpha(t)=1+\frac{\sinh(t)\cosh(t)-t}{\sinh^2(t)}$. We choose (satisfies (II) and (III)):
\begin{eqnarray}
\gamma(t)=1, \quad \alpha(t)=1+\frac{\sinh(Kt)\cosh(Kt)-1}{\sinh^2(Kt)},\quad \beta(t)=\tanh(Kt),\nonumber\\
\varphi(t)=\frac{mK}{2}[\coth(Kt)+1],\qquad c(t)=\frac{mK}{2}[\coth(Kt)+1].
\end{eqnarray}
Then we have the same formula in interval $(0,\frac{t(p)}{K}]$ as case 1.\\
(iii) Linear Li-Xu type\\
We choose (satisfies (II) and (III)):
\begin{eqnarray}
\alpha(t)=1+\frac{2}{3}Kt,\qquad \beta(t)=\tanh(Kt),\nonumber\\
\varphi(t)=\frac{m}{2}\Big(\frac{1}{t}+K+\frac{1}{3}K^2t\Big),\quad c(t)=\frac{m}{2}(\frac{1}{t}+K).
\end{eqnarray}
When $t\in(0,\frac{3}{2K}(\frac{1}{p}-1)]$, then the same formula as heat equation case holds.\\
(iv) Hamilton type\\
We choose
\begin{eqnarray}
\gamma(t)=\delta\cdot e^{-2Kt}, \qquad \alpha(t)=1,\qquad \beta(t)=t,\nonumber\\
\varphi(t)=\frac{m\cdot e^{2Kt}}{2\delta t},\qquad c(t)=\frac{m\cdot e^{2Kt}}{2\delta t},
\end{eqnarray}
where $\delta\in(p,1)$. When $t\in(0,\frac{1}{2K}\ln(\frac{\delta}{p})]$, then the same formula as heat equation case holds.\\
Case 2.3: $b<0$, $p>1$.\\
In this case, one can compare with Case 1.3. For Li-Yau and Li-Xu type estimates, we require $\gamma(t)=1$. Then the original $A_3$-system is implied by the following system
\begin{align}
\begin{cases}
\frac{4}{m}c-[\alpha p-1]M-2K\ge \frac{1}{\alpha}\Big(\frac{4}{m}c-\alpha'\Big)\\	
\varphi'-\frac{2}{m}c^2+\frac{\varphi}{\alpha}\Big(\frac{4}{m}c-\alpha'\Big)\ge 0\\
\frac{\alpha'}{\alpha}+\frac{\beta'}{\beta}-\frac{4\varphi}{m\alpha^2}\le 0\\
\beta(0)=0\quad\text{and}\quad\beta >0\\
1\le \alpha(t)\le 2.
\end{cases}
\end{align}
If we set
\begin{equation}
\overline{K}=K+(p-\frac{1}{2})M,
\end{equation}
then we get:\\
(i) Li-Yau type \\
We choose (satisfies (I)):
\begin{eqnarray}
\qquad \alpha(t)=\alpha\in(1,2),\qquad \beta(t)=t,\nonumber\\
\varphi(t)=\frac{m\alpha^2}{2t}+\frac{\alpha^2 m\overline{K}}{4(\alpha-1)},\quad
c(t)=\frac{m\alpha}{2t}+\frac{\alpha m\overline{K}}{2(\alpha-1)}.
\end{eqnarray}
(ii)Li-Xu type\\
We choose (satisfies (II), (III) and $1\le\alpha(t)\le 2$):
\begin{eqnarray}
\alpha(t)=1+\frac{\sinh(\overline{K}t)\cosh(\overline{K}t)-\overline{K}t}{\sinh^2(\overline{K}t)},\quad\beta(t)=\tanh(\overline{K}t),\nonumber\\
\varphi(t)=c(t)=\frac{m\overline{K}}{2}[\coth(\overline{K}t)+1].
\end{eqnarray}
(iii) Linear Li-Xu type\\
Direct computation gives a solution (satisfies (II) and (III)):
\begin{eqnarray}
\qquad \alpha(t)=(2-p)(1+\frac{2}{3}Kt),\qquad \beta(t)=\tanh(Kt),\nonumber\\
\varphi(t)=\frac{m(2-p)^2}{2}\Big(\frac{1}{t}+K+\frac{1}{3}K^2t\Big)+\frac{(2-p)^2mM}{16}(Mt+6)(1+\frac{2Kt}{3})^2,\nonumber\\ c(t)=\frac{m(2-p)}{2}(\frac{1}{t}+K)+\frac{(2-p)mM}{4}(1+\frac{2Kt}{3}).
\end{eqnarray}
(iv) Hamilton type\\
(1) If $p\ge 2$, we define $k=\frac{p-\delta}{2(1-\delta)}\ge 1$.
We choose (satisfies (I)):
\begin{eqnarray}
\alpha(t)=1,\quad\gamma(t)=\delta\cdot e^{-2Kt},\quad  \beta(t)=t,\nonumber\\
\varphi(t)=\frac{k^2m\cdot e^{2Kt}}{2\delta }(\frac{1}{t}+M),\qquad c(t)=\frac{km\cdot e^{2Kt}}{2\delta }(\frac{1}{t}+M),
\end{eqnarray}
where $\delta\in(0,1)$.\\
(2) If $p\in(0,2)$ and $\delta\in(0,2-p]$, we choose (satisfies (I)):
\begin{eqnarray}
\alpha(t)=1,\quad\gamma(t)=\delta\cdot e^{-2Kt},\quad  \beta(t)=t,\nonumber\\
\varphi(t)=\frac{m\cdot e^{2Kt}}{2\delta }(\frac{1}{t}+M),\qquad c(t)=\frac{m\cdot e^{2Kt}}{2\delta }(\frac{1}{t}+M).
\end{eqnarray}
If $p\in(0,2)$ and $\delta\in(2-p,1)$, we choose the same functions as in (1) and the same estimate holds.\\
\textbf{Remark 7.2}\\
For $a=0,b>0$ and $p>0$ case, J.Y.Li \cite{JL} got Li-Yau estimate even $b$ is a function. At case $p\in(0,1)$, our $\alpha$ can be chosen in the interval $(1,\frac{1}{p}]$, but they must need $\alpha=\frac{1}{p}$ by their subtle computation method. This difference yields that we can derive \eqref{sgdhk} in Ricci non-negative case which they cannot get. For suplinear case, \cite{JL} used their gradient estimate to drive the Liouville theorem of elliptic equation of (1.8) at case $1<p<\frac{n}{n-2}$ and $n\ge 4$ which did not recover Gidas-Spruck's Liouville theorem (see \cite{GS}) whose $p\in(1,\frac{n+2}{n-2})$ and $n\ge 3$. It is natural to ask whether one can recover (even improve) the main result in \cite{GS} (include Liouville theorem and their singularity decay estimates\footnote{Concretely, one should obtain the Liouville theorem for (1.8) and its generalization with $p\in (-\infty,p(n))$, where $p(n):=\infty$ for $n\in[1,2]$; $p(n):=\frac{n+2}{n-2}$ for $n>2$. It's notable that $n$ maybe a real number if one consider these estimates under Bakry-\'{E}mery curvature or RCD metric measure space. Of course, one should get universal bound estimates in any domain which cover the special case of singularity decay estimates in a neighborhood of a singularity point in \cite{GS}.}) by establishing logarithmic gradient estimate. Recently, the author of this paper confirms the question affirmatively in another paper.

\section*{Acknowledgments}This work was completed in November, 2021. The  author would like to thank Professor Jiayu Li for his support and  encouragement. This work was partially supported by NSFC [Grant Number 11721101].
Half a year of completing this work, the author finds that most results of present artical can be generalized to equations on RCD type metric measure spaces and collects these results in another note.


\begin{thebibliography}{99}
\bibitem{BBG} 
Bakry, Dominique, Bolley, François and Gentil, Ivan, \emph{The Li-Yau inequality and applications under a curvature-dimension condition}, Ann. Inst. Fourier (Grenoble) \textbf{67} (2017), no. 1, 397–421.
\bibitem{BE}
Bakry, Dominique and Émery, Michel, \emph{Diffusions hypercontractives}, (French) [Hypercontractive diffusions] Séminaire de probabilités, XIX, 1983/84, 177–206, Lecture Notes in Math., 1123, Springer, Berlin, 1985. 
 \bibitem{BQ}
 Bakry, Dominique and Qian, Zhongmin M, \emph{Harnack inequalities on a manifold with positive or negative Ricci curvature}, Rev. Mat. Iberoamericana \textbf{15} (1999), no. 1, 143–179.
\bibitem{BQ2}
Bakry, Dominique and Qian, Zhongmin, \emph{Volume comparison theorems without Jacobi fields}, Current trends in potential theory, 115–122, Theta Ser. Adv. Math., 4, Theta, Bucharest, 2005.  
\bibitem{C}
Cao, Xiaodong, \emph{Differential Harnack estimates for backward heat equations with potentials under the Ricci flow}, J. Funct. Anal. \textbf{255} (2008), no. 4, 1024–1038. 
\bibitem{CH}
Cao, Xiaodong and Hamilton, Richard S., \emph{Differential Harnack estimates for time-dependent heat equations with potentials}, Geom. Funct. Anal. \textbf{19} (2009), no. 4, 989–1000.

\bibitem{CLL}
Cao, Xiaodong, Fayyazuddin Ljungberg, Benjamin and Liu, Bowei, \emph{Differential Harnack estimates for a nonlinear heat equation}, J. Funct. Anal. \textbf{265} (2013), no. 10, 2312–2330. 
\bibitem{CLPW}
Cao, Xiaodong, Liu, Bowei, Pendleton, Ian and Ward, Abigail, \emph{Differential Harnack estimates for Fisher's equation}, Pacific J. Math. \textbf{290} (2017), no. 2, 273–300. 

\bibitem{CSY}
Cheng, Shiu Yuen and Yau, Shing Tung, \emph{Differential equations on Riemannian manifolds and their geometric applications}, Comm. Pure Appl. Math. \textbf{28} (1975), no. 3, 333–354. 

\bibitem{CLN}
 Chow, Bennett, Lu, Peng and Ni, Lei, \emph{Hamilton's Ricci flow}, Graduate Studies in Mathematics, 77. American Mathematical Society, Providence, RI; Science Press Beijing, New York, 2006. xxxvi+608 pp.

\bibitem{CY}
 Chung, Fan and Yau, Shing Tung, \emph{Logarithmic Harnack inequalities}, Math. Res. Lett. \textbf{3} (1996), no. 6, 793–812.

\bibitem{D} 
Davies, Edward Brian, \emph{Heat kernels and spectral theory}, Cambridge Tracts in Mathematics, 92. Cambridge University Press, Cambridge, 1989. x+197 pp. 

\bibitem{GS}
 Gidas, Basilis and Spruck, Joel, \emph{Global and local behavior of positive solutions of nonlinear elliptic equations}, Comm. Pure Appl. Math. \textbf{34} (1981), no. 4, 525–598. 

\bibitem{G} 
Gross, Leonard, \emph{Logarithmic Sobolev inequalities}, Amer. J. Math. \textbf{97} (1975), no. 4, 1061–1083. 

\bibitem{H}
Hamilton, Richard S., \emph{A matrix Harnack estimate for the heat equation}, Comm. Anal. Geom. \textbf{1} (1993), no. 1, 113–126.

\bibitem{HOU}
 Hou, Songbo, \emph{Gradient estimates for the Allen-Cahn equation on Riemannian manifolds}, Proc. Amer. Math. Soc. \textbf{147} (2019), no. 2, 619–628. 
\bibitem{HHL} 
 Huang, Guangyue, Huang, Zhijie and Li, Haizhong, \emph{Gradient estimates and differential Harnack inequalities for a nonlinear parabolic equation on Riemannian manifolds}, Ann. Global Anal. Geom. \textbf{43} (2013), no. 3, 209–232. 


\bibitem{HM} 
 Huang, Guangyue and Ma, Bingqing, \emph{Gradient estimates for a nonlinear parabolic equation on Riemannian manifolds}, Arch. Math. (Basel) \textbf{94} (2010), no. 3, 265–275.




\bibitem{KN}
Kotschwar, Brett and Ni, Lei, \emph{Local gradient estimates of p-harmonic functions, 1/H-flow, and an entropy formula}, Ann. Sci. Éc. Norm. Supér. (4) \textbf{42} (2009), no. 1, 1–36.


\bibitem{L}  
 Lee, John M., \emph{Introduction to smooth manifolds}, Second edition. Graduate Texts in Mathematics, 218. Springer, New York, 2013. xvi+708 pp.

\bibitem{JL} 
Li, Jiayu, \emph{Gradient estimates and Harnack inequalities for nonlinear parabolic and nonlinear elliptic equations on Riemannian manifolds}, J. Funct. Anal. \textbf{100} (1991), no. 2, 233–256. 

\bibitem{LX} 
Li, Junfang and Xu, Xiangjin, \emph{Differential Harnack inequalities on Riemannian manifolds I: linear heat equation}, Adv. Math. \textbf{226} (2011), no. 5, 4456–4491. 

\bibitem{PL} 
Li, Peter, \emph{Geometric analysis}, Cambridge Studies in Advanced Mathematics, 134. Cambridge University Press, Cambridge, 2012. x+406 pp.


\bibitem{LY}
 Li, Peter and Yau, Shing Tung, \emph{On the parabolic kernel of the Schrödinger operator}, Acta Math. \textbf{156} (1986), no. 3-4, 153–201.

\bibitem{XL} 
 Li, Xiang-Dong, \emph{Liouville theorems for symmetric diffusion operators on complete Riemannian manifolds}, J. Math. Pures Appl. (9) \textbf{84} (2005), no. 10, 1295–1361.


\bibitem{YL}
Li, Yi, \emph{Li-Yau-Hamilton estimates and Bakry-Emery-Ricci curvature}, Nonlinear Anal. \textbf{113} (2015), 1–32.


\bibitem{LZ}
 Lin, Fanghua and Zhang, Zhang, Qi S., \emph{On ancient solutions of the heat equation}, Comm. Pure Appl. Math. \textbf{72} (2019), no. 9, 2006–2028.



\bibitem{LNVV} 
Lu, Peng, Ni, Lei, Vázquez, Juan-Luis and Villani, Cédric, \emph{Local Aronson-Bénilan estimates and entropy formulae for porous medium and fast diffusion equations on manifolds}, J. Math. Pures Appl. (9) \textbf{91} (2009), no. 1, 1–19.
 


\bibitem{M}
Ma, Li, \emph{Gradient estimates for a simple elliptic equation on complete non-compact Riemannian manifolds}, J. Funct. Anal. \textbf{241} (2006), no. 1, 374–382. 


\bibitem{MS}
 Mosconi, Sunra, \emph{Liouville theorems for ancient caloric functions via optimal growth conditions}, Proc. Amer. Math. Soc. \textbf{149} (2021), no. 2, 897–906. 
 
\bibitem{MW} 
Munteanu, Ovidiu and Wang, Jiaping, \emph{Smooth metric measure spaces with non-negative curvature}, Comm. Anal. Geom. \textbf{19} (2011), no. 3, 451–486. 
\bibitem{Q}
Qian, Bin, \emph{Remarks on differential Harnack inequalities}, J. Math. Anal. Appl. \textbf{409} (2014), no. 1, 556–566.




\bibitem{SZ} 
Souplet, Philippe and Zhang, Qi S., \emph{Sharp gradient estimate and Yau's Liouville theorem for the heat equation on noncompact manifolds}, Bull. London Math. Soc. \textbf{38} (2006), no. 6, 1045–1053.



\bibitem{Y1} 
Yang, Yun Yan, \emph{Gradient estimates for a nonlinear parabolic equation on Riemannian manifolds}, Proc. Amer. Math. Soc. \textbf{136} (2008), no. 11, 4095–4102. 


\bibitem{Y2} 
 \bysame, \emph{Gradient estimates for the equation $\Delta u+u^{-\alpha}=0$ on Riemannian manifolds}, Acta Math. Sin. (Engl. Ser.) \textbf{26} (2010), no. 6, 1177–1182.



\bibitem{Z}
 Zhu, Xiaobao, \emph{Gradient estimates and Liouville theorems for nonlinear parabolic equations on noncompact Riemannian manifolds}, Nonlinear Anal. \textbf{74} (2011), no. 15, 5141–5146.

\end{thebibliography}
\end{document}